	\def\MR#1{}
\newcommand{\Bl}{{\mathcal{B}\ell}}
\newcommand{\kk}{\mathbb{k}}
\newcommand{\sA}{A}
\newcommand{\sB}{B}
\newcommand{\sR}{\mathscr{R}}
\newcommand{\sT}{\mathscr{T}}
\newcommand{\fG}{\mathfrak{G}}
\newcommand{\sH}{\mathscr{H}}
\newcommand{\sM}{\mathscr{M}}
\newcommand{\length}{{\rm length}}
\newcommand{\NN}{\normalfont\mathbb{N}}
\newcommand{\ZZ}{\mathbb{Z}}
\newcommand{\MM}{{\normalfont\mathfrak{M}}}
\newcommand{\PP}{{\normalfont\mathbb{P}}}
\newcommand{\dd}{\mathbf{d}}
\newcommand{\mm}{{\normalfont\mathfrak{m}}}
\newcommand{\bv}{\mathbf{v}}
\newcommand{\fC}{\mathfrak{C}}
\newcommand{\pp}{{\normalfont\mathfrak{p}}}
\newcommand{\bn}{{\normalfont\mathbf{n}}}
\newcommand{\bm}{{\normalfont\mathbf{m}}}
\newcommand{\Ker}{\normalfont\text{Ker}}
\newcommand{\Coker}{\normalfont\text{Coker}}
\newcommand{\Ann}{\normalfont\text{Ann}}
\newcommand{\Supp}{\normalfont\text{Supp}}
\newcommand{\Sym}{\normalfont\text{Sym}}
\newcommand{\Rees}{\mathscr{R}}
\newcommand{\ee}{{\normalfont\mathbf{e}}}
\newcommand{\OO}{\mathcal{O}}
\newcommand{\FF}{\mathcal{F}}
\newcommand{\HL}{\normalfont\text{H}_{\mm}}
\newcommand{\HH}{\normalfont\text{H}}
\newcommand{\Proj}{\normalfont\text{Proj}}
\newcommand{\Spec}{\normalfont\text{Spec}}
\newcommand{\multProj}{\normalfont\text{MultiProj}}
\newcommand{\biProj}{{\normalfont\text{BiProj}}}
\DeclareMathOperator{\HT}{ht}
\DeclareMathOperator{\grade}{grade}
\DeclareMathOperator{\gr}{gr}
\def\f0{\mathbf{0}}
\def\fv{\mathbf{v}}
\def\ft{\mathbf{t}}
\def\fn{\mathbf{n}}
\def\1{\mathbf{1}}
\newtheorem{theorem}{Theorem}[section]
\newtheorem{headthm}{Theorem}
\newaliascnt{headcor}{headthm}
\newaliascnt{headconj}{headthm}
\newaliascnt{corollary}{theorem}
\newtheorem{corollary}[corollary]{Corollary}
\newaliascnt{claim}{theorem}
\newaliascnt{lemma}{theorem}
\newtheorem{lemma}[lemma]{Lemma}
\newaliascnt{conjecture}{theorem}
\newaliascnt{proposition}{theorem}
\newtheorem{proposition}[proposition]{Proposition}
\theoremstyle{definition}
\newaliascnt{definition}{theorem}
\newtheorem{definition}[definition]{Definition}
\newaliascnt{notation}{theorem}
\newtheorem{notation}[notation]{Notation}
\newaliascnt{example}{theorem}
\newtheorem{example}[example]{Example}
\newaliascnt{examples}{theorem}
\newaliascnt{remark}{theorem}
\newtheorem{remark}[remark]{Remark}
\newaliascnt{question}{theorem}
\newaliascnt{questions}{theorem}
\newaliascnt{problem}{theorem}
\newaliascnt{construction}{theorem}
\newaliascnt{setup}{theorem}
\newtheorem{setup}[setup]{Setup}
\newaliascnt{setupdef}{theorem}
\newaliascnt{algorithm}{theorem}
\newaliascnt{observation}{theorem}
\newaliascnt{defprop}{theorem}
\def\equationautorefname~#1\null{(#1)\null}
\def\sectionautorefname~#1\null{Section #1\null}
\def\subsectionautorefname~#1\null{\S #1\null}
\def\surjects{\twoheadrightarrow}
\DeclareFontFamily{OT1}{pzc}{}
\DeclareFontShape{OT1}{pzc}{m}{it}{<-> s * [1.100] pzcmi7t}{}
\DeclareMathAlphabet{\mathchanc}{OT1}{pzc}{m}{it}
\DeclareMathOperator{\fSpec}{\mathchanc{Spec}}
\DeclareMathOperator{\br}{{\rm br}}
\title{Relative mixed multiplicities and mixed Buchsbaum-Rim multiplicities}
\author{Yairon Cid-Ruiz}
\address{Department of Mathematics, North Carolina State University, Raleigh, NC, USA}
\email{ycidrui@ncsu.edu}
\date{\today}
\keywords{relative mixed multiplicities, mixed Buchsbaum-Rim multiplicities, joint blow-up, Rees algebra, integral dependence, birationality}
\subjclass[2020]{13H15, 14C17, 13D40, 13A30, 13B22}
\begin{document}

	\maketitle

	\begin{abstract}
		We define and study the natural multigraded extension of the relative multiplicities introduced by Simis, Ulrich and Vasconcelos.
		We call these new invariants \emph{relative mixed multiplicities}.
		We show that they have a stable value equal to the mixed Buchsbaum--Rim multiplicity of Kleiman and Thorup.
		Furthermore, we prove that integral dependence and birationality can be detected via the vanishing of relative mixed multiplicities.
	\end{abstract}

	\section{Introduction}

	Let $A \subseteq B$ be an inclusion of standard $\NN^p$-graded algebras over a Noetherian local ring $R = [A]_{(0,\ldots,0)} = [B]_{(0,\ldots,0)}$.
	Assume that $\length_R\left([B]_{\ee_i}/[A]_{\ee_i}\right) < \infty$ for all $1 \le i \le p$.
	Our main goal is to define and study a set of new invariants that we call \emph{relative mixed multiplicities} and that provide the natural multigraded extension of the \emph{relative multiplicities} introduced in seminal work of  Simis, Ulrich and Vasconcelos \cite{SUV_MULT}.
	In the $\NN$-graded setting (i.e., when $p = 1$), they introduced a series of relative multiplicities $e_t(A,B)$, one for each $t \ge 1$, associated to the numerical functions $\length_R\left([B]_n \big/  [A]_{n-t+1}[B]_{t-1}\right)$.
	They also showed that the relative multiplicities $e_t(A,B)$ have a stable value $e_\infty(A, B)$, and that the vanishing of $e_{\infty}(A,B)$ detects integrality and the vanishing of $e_1(A, B)$ detects both integrality and birationality.
	The relative multiplicity $e_1(A, B)$ was considered before by Kirby and Rees \cite{KIRBY_REES} (including a multigraded setting), whereas $e_\infty(A, B)$ was introduced by Kleiman and Thorup \cite{KLEIMAN_THORUP_GEOM} under the name of \emph{generalized Buchsbaum--Rim multiplicity}.
	
	\medskip
	
	In this paper, we extend the above invariants and results to a multigraded setting. 
	Our first main result is the following theorem that serves as the stepping stone to define the notion of relative mixed multiplicities.

	\begin{headthm}[\autoref{thm_relative_mixed_mult}]
		\label{thmA}
		Fix a tuple $\ft = (t_1,\ldots,t_p) \in \ZZ_+^p$ of positive integers.
		Then the following function
		$$
		\lambda_\ft^{A,B}(n_1,\ldots,n_p) \;=\; {\rm length}_R\left(
		\frac{\left[\sB\right]_{(n_1,\ldots,n_p)}} {\left[\sA\right]_{(n_1-t_1+1,\ldots,n_p-t_p+1)}\left[\sB\right]_{(t_1-1,\ldots,t_p-1)}}
		\right)
		$$
		coincides with a polynomial $P_{\ft}^{A,B}(n_1,\ldots,n_p)$ for all $n_i \gg 0$.
		Moreover, $P_{\ft}^{A,B}$ has total degree at most $\dim\left(\multProj(B)\right)$ and its normalized leading coefficients are nonnegative integers.
	\end{headthm}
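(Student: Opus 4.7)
The plan is to exhibit $\lambda_{\mathbf{t}}^{A,B}$ as the $R$-length Hilbert function of a finitely generated $\mathbb{N}^p$-graded module over a standard $\mathbb{N}^p$-graded Noetherian $R$-algebra, and then apply the classical multigraded Hilbert polynomial theorem, which simultaneously gives (i) polynomial behavior for $\mathbf{n}\gg\mathbf{0}$, (ii) the total-degree bound by the dimension of the multi-projective support, and (iii) non-negativity of the normalized leading coefficients as mixed multiplicities of the module.

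The natural candidate is $M := B/\mathcal{K}_{\mathbf{t}}$, where $\mathcal{K}_{\mathbf{t}}\subseteq B$ is the $A$-submodule with $[\mathcal{K}_{\mathbf{t}}]_{\mathbf{n}} = [A]_{\mathbf{n}-\mathbf{t}+\mathbf{1}}[B]_{\mathbf{t}-\mathbf{1}}$, so that $\lambda_{\mathbf{t}}^{A,B}(\mathbf{n}) = \ell_R\bigl([M]_{\mathbf{n}}\bigr)$. Finiteness of these lengths for $\mathbf{n}\geq\mathbf{t}-\mathbf{1}$ follows from the hypothesis $\ell_R([B]_{\mathbf{e}_i}/[A]_{\mathbf{e}_i})<\infty$ together with the standard-graded factorization $[B]_{\mathbf{n}} = [B]_{\mathbf{e}_1}^{n_1}\cdots[B]_{\mathbf{e}_p}^{n_p}$, via a stepwise filtration that replaces one factor of $[B]_{\mathbf{e}_i}$ at a time by $[A]_{\mathbf{e}_i}$, each step contributing a successive quotient of finite $R$-length.

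The main obstacle is that $M$ is typically not finitely generated as an $A$-module: $B$ itself need not be finitely generated over $A$, as already the toy case $A=R[x]\subseteq B=R[x,y]$ fails. To overcome this, I would pass to a joint blow-up / multi-Rees algebra $\mathcal{S}$, built as a standard $\mathbb{N}^p$-graded Noetherian $R$-algebra inside a polynomial extension of $B$ whose degree-$\mathbf{n}$ piece encodes the product $[A]_{\mathbf{e}_1}^{n_1}\cdots[A]_{\mathbf{e}_p}^{n_p}$ together with the fixed coefficient module $[B]_{\mathbf{t}-\mathbf{1}}$. A suitable twist of $M$ (obtained by tensoring with $[B]_{\mathbf{t}-\mathbf{1}}$ and re-grading) then becomes a finitely generated multigraded $\mathcal{S}$-module whose Hilbert function reproduces $\lambda_{\mathbf{t}}^{A,B}$ up to an affine change of indices. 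Setting up this construction so that the grading is genuinely standard $\mathbb{N}^p$-graded and the indices line up is the technically delicate step.

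Once this setup is in place, the multigraded Hilbert polynomial theorem (as developed by Herrmann--Trung, Katz, Verma and others) yields the polynomial $P_{\mathbf{t}}^{A,B}$ of total degree at most $\dim\multProj\bigl(\mathcal{S}/\Ann_{\mathcal{S}}(M)\bigr)$. The sharper bound by $\dim\multProj(B)$ follows because $M$ is a quotient of a shift of $B$, so its multi-projective support is contained in $\multProj(B)$. Non-negativity of the normalized leading coefficients is the classical positivity of mixed multiplicities for finitely generated multigraded modules, proved after reducing to an Artinian base (by passing to $R/\Ann_R(M_{\mathbf{n}})$ for $\mathbf{n}\gg\mathbf{0}$) via a multi-homogeneous filter-regular or Koszul-type argument on a general system of parameters.
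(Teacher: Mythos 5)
Your high-level framework is reasonable, but the plan fails precisely at the step you flag as ``technically delicate,'' and that failure is not a matter of bookkeeping. The function $\lambda_\ft^{A,B}(\fn)=\length_R\big([B]_\fn/[A]_{\fn-\ft+\mathbf{1}}[B]_{\ft-\mathbf{1}}\big)$ cannot be realized as the Hilbert function of a single finitely generated $\NN^p$-graded module over a standard $\NN^p$-graded Noetherian $R$-algebra $\mathcal{S}$ (not even after a shift or a tensor twist). The obstruction is the same non-finiteness you already observed: if $\mathcal{S}$ is a Rees-type algebra built from the $[A]_{\ee_i}$ (which is the only candidate whose degree-$\ee_i$ piece can act by ``multiplication by $[A]_{\ee_i}$''), then to generate the degree-$(\fn+\ee_j)$ piece of your module from lower degree you would need $[A]_{\ee_j}[B]_\fn+[A]_{\fn+\ee_j-\ft+\mathbf{1}}[B]_{\ft-\mathbf{1}}$ to equal $[B]_{\fn+\ee_j}$, which holds iff $A\hookrightarrow B$ is integral. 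So the ``twist by $[B]_{\ft-\mathbf{1}}$'' does not rescue finite generation. The same obstruction persists if you try to pass to a $(\NN^p\oplus\NN^p)$-grading: the would-be bigraded module with $(\fv,\fn)$-piece $[B]_{\fv+\fn}/[A]_{\fv+\fn-\ft+\mathbf{1}}[B]_{\ft-\mathbf{1}}$ is still not finitely generated over the bigraded Rees algebra $\bigoplus_{\fv,\fn}[B]_\fv[A]_\fn T^\fn$, for exactly the same reason along $\fv=\mathbf{0}$.

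What the paper does instead is essential and missing from your sketch. It introduces a second free parameter $\fv\in\NN^p$ (so a $(\NN^p\oplus\NN^p)$-grading) and, crucially, splits the target via the short exact sequence
\[
0 \longrightarrow \frac{[A]_\fn[B]_\fv}{[A]_{\fv+\fn-\ft+\mathbf{1}}[B]_{\ft-\mathbf{1}}} \longrightarrow \frac{[B]_{\fv+\fn}}{[A]_{\fv+\fn-\ft+\mathbf{1}}[B]_{\ft-\mathbf{1}}} \longrightarrow \frac{[B]_{\fv+\fn}}{[A]_\fn[B]_\fv} \longrightarrow 0.
\]
Neither of the outer terms alone is what you want, but each \emph{is} a bigraded Hilbert function of a finitely generated module over a Noetherian bigraded algebra: the right-hand quotient comes from the (multigraded) Kleiman--Thorup joint blow-up in $B[t]$ (\autoref{thm_poly_KT}, using sheaf cohomology and the Leray spectral sequence, not just the algebraic multigraded Hilbert theorem), and the left-hand quotient comes from the new joint blow-up $\fG$ of \autoref{sect_new_joint_BL} (\autoref{thm_new_joint_Bl}). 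Adding the two polynomials and then specializing $\fv$ to a fixed large $\fv'$ yields $P_\ft^{A,B}$. Your proposal contains no hint of this decomposition nor of the fact that two distinct constructions are required; without it, there is no single $\mathcal{S}$-module to apply the Hilbert polynomial theorem to. Also note that your dimension bound argument (``the multi-projective support of $M$ is contained in $\multProj(B)$'') does not apply once you work over a Rees-type algebra rather than over $B$ itself; the paper instead needs the dedicated dimension lemmas \autoref{lem_dim_KT} and \autoref{lem_dim_GM}.
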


	Let $X = \multProj(B)$ and $r = \dim(X)$.
	The polynomial $P_\ft^{A,B}$ from \autoref{thmA} can be written as 
	\[
	P_\ft^{A,B}(n_1,\ldots,n_p) \;=\;  \sum_{\beta \in \NN^p,\, |\beta|=r} e_\ft\left(\beta; A, B\right) \, \frac{n_1^{\beta_1}\cdots n_p^{\beta_p}}{\beta_1!\cdots\beta_p!} \;+\; \text{\rm(lower degree terms)}.
	\]
	We say that the nonnegative integers $e_\ft\left(\beta; A, B\right) \ge 0$ are the \emph{relative mixed multiplicities of $B$ over $A$}.
	We show that these new invariants are nonincreasing on the parameter $\ft \in \NN^p$ and that they have a stable value.
	Indeed, from \autoref{lem_properties}, we obtain the inequality $e_{\ft'}(\beta;A,B) \le e_{\ft}(\beta;A,B)$ for any $\ft' = (t_1',\ldots,t_p') \in \ZZ_+^p$ with $\ft' \ge \ft$ and the existence of the following limit
	$$
	e_\infty(\beta; A,B) \;=\; \lim_{t_1\to \infty, \ldots, t_p \to \infty} e_{(t_1,\ldots,t_p)}\left(\beta; A, B\right).
	$$
	As it turns out, this stable value coincides with the \emph{mixed Buchsbaum--Rim multiplicity} of Kleiman and Thorup \cite{KLEIMAN_THORUP_MIXED}:
	\[
	e_\infty(\beta; A,B) \;=\; \br_\beta\left([A]_{\ee_1},\ldots,[A]_{\ee_p};B\right)  \;=\; \int_P  c_1\left(\OO_P(\ee_1)\right)^{\beta_1}\cdots c_1\left(\OO_P(\ee_p)\right)^{\beta_p} \,\cap \, \left[P\right]_r;
	\]
	here $P = P_X^{Z_1,\ldots,Z_p}$ denotes the Kleiman--Thorup transform of $X = \multProj(B)$ with respect to $Z_1,\ldots,Z_p$, where $Z_i = V_+\left([A]_{\ee_i}B\right) \subset X$  is the closed subscheme  determined by $[A]_{\ee_i}$ (see \autoref{def_KT_transform} and \autoref{def_mixed_BR} for further details).
	
	\medskip
	
	Our proof of \autoref{thmA} is based upon using two main ingredients.
	Firstly, we consider the stable value $e_\infty(\beta; A, B)=\br_\beta([A]_{\ee_1},\ldots,[A]_{\ee_p}; B)$, which is geometrical by nature. 
	For that, we extend the joint blow-up construction of Kleiman and Thorup to an arbitrary multigraded setting in \autoref{sect_KT}, and this allows us to define the mixed Buchsbaum--Rim multiplicity $\br_\beta([A]_{\ee_1},\ldots,[A]_{\ee_p}; B)$.
	Secondly, in \autoref{sect_new_joint_BL}, we identify and introduce new joint blow-up constructions that measure the difference between each relative mixed multiplicity $e_\ft(\beta; A, B)$ and the stable value $e_\infty(\beta; A, B)$.
	Therefore, our approach is to define $e_\ft(\beta; A, B)$ as a sum of intersection numbers (multidegrees) in terms of the joint blow-up construction of Kleiman and Thorup and our new joint blow-up construction. 
	More precisely,  in \autoref{lem_properties}, we show the equality 
	$$
	e_\ft\left(\beta; A, B\right) \;=\; \br_\beta\left([A]_{\ee_1}, \ldots, [A]_{\ee_p}; B\right) \;+\; j_{\beta}^{\#}\big([A]_{\ee_1}, \ldots, [A]_{\ee_p}; \sH_{\ft-\mathbf{1}}\big),
	$$
	where $j_{\beta}^{\#}([A]_{\ee_1}, \ldots, [A]_{\ee_p}; \sH_{\ft-\mathbf{1}})$ is a mixed multiplicity of one of our new joint blow-up constructions considered over the ideal $\sH_{\ft-\mathbf{1}} = \big([B]_{(t_1-1,\ldots,t_p-1)}\big) \subset B$  (see \autoref{def_j_mult_new_joint} for further details).
	
	\medskip
	
	Similarly to the $\NN$-graded case, we obtain that the distinguished relative mixed multiplicities $e(\beta; A, B)  = e_{(1,\ldots,1)}(\beta; A, B)$ and $e_\infty(\beta; A, B)$ can detect integral dependence and birationality. 
	Our second main result is the following theorem.

	\begin{headthm}[\autoref{thm_criteria}]
		\label{thmB}
		Consider the associated morphism $$
		f \;:\; U \;\subseteq\; X = \multProj(B) \;\longrightarrow\; Y = \multProj(A)
		$$
		where $U = X \setminus V_+(A_{++}B).$
		Then the following statements hold: \smallskip
		\begin{enumerate}[\rm (i)]
			\item If we obtain a finite morphism $f : X \rightarrow Y$, then $e_\infty\left(\beta; A, B\right) = 0$ for all $\beta \in \NN^p$ with $|\beta| = r$.
			The converse holds if $B$ is equidimensional and catenary.
			\item If we obtain a finite birational morphism $f:X\rightarrow Y$, then $e\left(\beta; A, B\right) = 0$ for all $\beta \in \NN^p$ with $|\beta| = r$.
			The converse holds if $B$ is equidimensional and catenary.
		\end{enumerate}
	\end{headthm}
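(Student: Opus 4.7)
The plan is to exploit the decomposition established in \autoref{lem_properties},
\[
e_\ft(\beta; A, B) \;=\; \br_\beta([A]_{\ee_1}, \ldots, [A]_{\ee_p}; B) \;+\; j_\beta^{\#}\bigl([A]_{\ee_1}, \ldots, [A]_{\ee_p}; \sH_{\ft - \mathbf{1}}\bigr),
\]
in which both summands are nonnegative integers. Sending $\ft \to \infty$ gives $e_\infty(\beta; A, B) = \br_\beta(\ldots)$, while setting $\ft = \mathbf{1}$ expresses $e(\beta; A, B)$ as $\br_\beta(\ldots) + j_\beta^{\#}(\ldots; \sH_{\mathbf{0}})$. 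Hence the vanishing statements in (i) and (ii) reduce, respectively, to the vanishing of the Kleiman--Thorup contribution alone, and to the simultaneous vanishing of both contributions.

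For part (i), the first step is to translate the geometric hypothesis into a statement on the Kleiman--Thorup transform $\pi : P \to X$. If $f : X \to Y$ is finite, then for each $i$ the ideal $[A]_{\ee_i} B$ is a joint reduction of $[B]_{\ee_i} B$ on $X$ in the appropriate multigraded sense, so the divisor classes $c_1(\OO_P(\ee_i))$ appearing in the intersection-theoretic description of $\br_\beta$ are pulled back along a finite map that factors through $Y = \multProj(A)$; a projection-formula argument then kills all top products of the $c_1(\OO_P(\ee_i))$ against $[P]_r$, yielding $\br_\beta = 0$ whenever $|\beta| = r$. For the converse, under the equidimensional and catenary hypotheses on $B$, the vanishing of all such $\br_\beta$ is a multigraded form of Rees's numerical criterion for joint reductions, and forces $([A]_{\ee_1} B, \ldots, [A]_{\ee_p} B)$ to be a joint reduction of $([B]_{\ee_1} B, \ldots, [B]_{\ee_p} B)$; standard dimension bookkeeping on each minimal prime of $B$ then yields finiteness of $f$.

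For part (ii), I would argue in parallel for the additional term $j_\beta^{\#}(\ldots; \sH_{\mathbf{0}})$. Geometrically this term is an intersection number on the new joint blow-up of \autoref{sect_new_joint_BL} and measures the failure of $f : X \to Y$ to be birational on each component. If $f$ is finite and birational, then each generic fiber consists of a single reduced point on each irreducible component of $X$, which via the structure of the new joint blow-up translates into the vanishing of the relevant top intersection products, i.e.\ $j_\beta^{\#}(\ldots; \sH_{\mathbf{0}}) = 0$ for all $|\beta| = r$; combined with (i) this establishes the forward direction. For the converse, after using (i) to deduce finiteness from $\br_\beta = 0$, one extracts birationality from $j_\beta^{\#} = 0$ via an analogous Rees-type numerical criterion on the new joint blow-up, for which the equidimensional and catenary hypotheses are again essential.

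The principal obstacle is the converse direction in both parts: going from numerical vanishing back to the geometric statements of finiteness and birationality. This is the multigraded analogue of the classical Rees criterion, and the equidimensional and catenary hypotheses are indispensable, since otherwise lower-dimensional components of $B$ could absorb contributions to the intersection numbers and make $\br_\beta$ or $j_\beta^{\#}$ vanish without any reduction relation holding. The delicate part of the argument is a careful bookkeeping over the minimal primes of $B$, matching each of their contributions to $\br_\beta$ and $j_\beta^{\#}$ with the dimensions of the corresponding irreducible components of $X$ and of their images in $Y$, so that the numerical vanishing translates into the geometric conclusion on every component.
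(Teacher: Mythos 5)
Your opening reduction via \autoref{lem_properties} (so that (i) is the vanishing of $\br_\beta$ and (ii) adds the term $j_\beta^{\#}(\ldots;\sH_{\mathbf{0}})$) is consistent with the paper, but the core of the theorem is not actually established in your sketch. First, the forward mechanism you propose for (i) does not work as stated: a finite morphism preserves dimension, so the fact that certain classes are ``pulled back along a finite map that factors through $Y$'' does not, via the projection formula, kill top intersection numbers against $[P]_r$. You also gloss over the multigraded subtlety that a finite morphism $f:X\to Y$ need not come from an integral extension $A\subseteq B$ (see the paper's example after the theorem); the paper first replaces $B$ by $B/(0:_BB_{++}^\infty)$ and passes to the Segre algebras $S=\bigoplus_n [A]_{(n,\ldots,n)}\subseteq T=\bigoplus_n [B]_{(n,\ldots,n)}$, after which integrality gives $[T]_n=[S]_{n-t+1}[T]_{t-1}$ for $t\gg 0$ and all $n$, so $e_\infty(S,T)=0$ by a one-line length computation rather than by intersection theory.

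Second, and more seriously, both converses in your proposal are asserted rather than proven: you invoke ``a multigraded form of Rees's numerical criterion for joint reductions,'' but that criterion is essentially the statement being proven, and its known forms require $B$ equidimensional and \emph{universally} catenary --- exactly the hypothesis the theorem is designed to weaken (see \autoref{rem_weaker_cond}). The paper's actual argument avoids any such citation: after the Segre reduction, if $S\subseteq T$ is not integral then $S_+T$ is relevant, one picks a closed point $x\in V_+(S_+T)\subset X$, uses \autoref{lem_dim_closed_point} (this is precisely where equidimensional and catenary enter, giving $\dim(\OO_x)=r$), and then the Rees-style \autoref{lem_properties_closed_point} bounds $e_\infty(S,T)$ from below by $[\kappa(x):\kappa]\,e(\OO_x)/d^r>0$. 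Likewise, part (ii) is not handled in the paper by a second Rees-type criterion on the new joint blow-up: birationality is extracted from the associativity formula for mixed $j$-multiplicities (\autoref{lem_assoc_j_mult}) applied to the $A$-modules $M=\sum_{i=0}^{t-1}[B]_iA\subseteq B$, giving $j_r(B/M)=e_t(A,B)=0$ and hence $M_\pp=B_\pp$ at the minimal primes of $A$ (\autoref{cor_applications_NN}). Without supplying arguments of this kind, your proposal leaves the decisive steps --- the forward vanishing in the correct (Segre-reduced) formulation and both converses under merely catenary hypotheses --- unproven.
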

	
	To the best of our knowledge, even in the $\NN$-graded case, the result of \autoref{thmB} seems to be new  (see \autoref{rem_weaker_cond} and \autoref{cor_applications_NN}): for the converse statements of \autoref{thmB}, it is typically assumed that $B$ is equidimensional and \emph{universally} catenary (see \cite{SUV_MULT, KLEIMAN_THORUP_GEOM, KATZ_RED_MOD}).

	\medskip
	
	\noindent
	\textbf{Outline.} 
	The basic outline of this paper is as follows. 	
	In \autoref{sect_KT}, we extend the joint blow-up construction of Kleiman and Thorup to an arbitrary multigraded setting.
	We present some basic results regarding a multigraded version of $j$-multiplicities in  \autoref{sect_mixed_j_mult}.
	Our new joint blow-up construction is introduced in \autoref{sect_new_joint_BL}.
	We prove \autoref{thmA} in \autoref{sect_relative_mult}.
	Finally, \autoref{sect_applications} contains the proof of \autoref{thmB} and some applications.

	\medskip
	
	\noindent
	\textbf{Notation.} 
	Let $p \ge 1$ be a positive integer. 
	If $\bn = (n_1,\ldots,n_p),\bm = (m_1,\ldots,m_p) \in \ZZ^p$ are two multi-indexes, we write $\bn \ge \bm$ whenever $n_i \ge m_i$ for all $1 \le i \le p$, and $\bn > \bm$ whenever $n_j > m_j$ for all $1 \le j \le p$.
	For each $1 \le i \le p$, let $\ee_i \in \NN^p$ be the $i$-th elementary vector $\ee_i:=\left(0,\ldots,1,\ldots,0\right)$.
	Let $\mathbf{0} \in \NN^p$ and $\mathbf{1} \in \NN^p$ be the vectors $\mathbf{0}:=(0,\ldots,0)$ and $\mathbf{1}:=(1,\ldots,1)$ of $p$ copies of $0$ and $1$, respectively.  
	For any $\bn = (n_1,\ldots,n_p) \in \ZZ^p$, we define its weight as $\lvert \bn \rvert := n_1+\cdots+n_p$. 
	We say that an $\NN^p$-graded ring $B$ is \emph{standard} if $B$ is generated as an algebra over $[B]_{(0,\ldots,0)}$ by finitely many elements of the form $b \in [B]_{\ee_i}$ for some $1 \le i \le p$.
	For a standard $\NN^p$-graded ring $B$,  we set the following: 
	\begin{itemize}[--]
		\item $B_+ := \bigoplus_{\fv > \mathbf{0}} [B]_\fv$ is the ideal of elements of positive total degree.
		\item $B_{++} := \bigoplus_{\fv \ge \mathbf{1}} [B]_\fv$ is the multigraded irrelevant ideal.
		\item $\multProj(B) := \big\lbrace \pp \in \Spec(B) \mid \text{$\pp$ is $\NN^p$-graded and $\pp \not\supseteq B_{++}$} \big\rbrace$ is the corresponding multiprojective scheme.
		\item $V_+(I) := \big\lbrace \pp \in \multProj(B) \mid \pp \supseteq I \big\rbrace$ for any $\NN^p$-graded ideal $I \subset B$.
	\end{itemize}

	\section{An extension of a joint blow-up construction of Kleiman and Thorup}
	\label{sect_KT}
	
	Here, we provide a multigraded extension of a joint blow-up construction of Kleiman and Thorup \cite{KLEIMAN_THORUP_MIXED}, and we closely follow their approach.
	As a consequence, we can introduce the notion of mixed Buchsbaum-Rim multiplicities in an arbitrary multigraded setting.
	Throughout this section the following setup is fixed.
	
	\begin{setup}
		\label{setup_kleiman_thorup}
		Let $\sB$ be a standard $\NN^p$-graded algebra over a Noetherian local ring $R = \left[\sB\right]_{(0,\ldots,0)}$.
		Denote by $\mm \in R$ the maximal ideal of $R$.
		Let $X = \multProj(\sB) \subset \PP_R^{m_1} \times_R \cdots \times_R \PP_R^{m_p}$ be the multiprojective scheme determined by $\sB$, embedded in a product of projective spaces over $R$.
		Let $q \ge 1$ be a positive integer.
		For each $1 \le i \le q$, let $\dd_i \in \NN^p$ be a tuple of nonnegative integers and $H_i \subseteq [\sB]_{\dd_{i}}$ be a nonzero $R$-submodule.
		Let $Z_i \subset X$ be the closed subscheme determined by $H_i$ and $\mathcal{J}_i \subset \OO_X$ be the corresponding ideal sheaf.
		Let $Z \subset X$ be the closed subscheme defined by the ideal sheaf $\mathcal{J} = \mathcal{J}_1 \cdots \mathcal{J}_q \subset \OO_X$.
	\end{setup}

	Let $\widehat{X} := \fSpec_X\left(\OO_X[t]\right) = X \times_{\Spec(R)} \mathbb{A}_R^1$ be the relative affine line over $X$. 
	We view $X$ as a closed subscheme embedded in $\widehat{X}$ as the principal divisor $\{t=0\}$.
	Let $\widehat{\mathcal{B}} := \Bl_{Z_1,\ldots,Z_q}\big(\widehat{X}\big)$ be the joint blow-up of $\widehat{X}$ along the closed subschemes $Z_1,\ldots,Z_q$.
	This scheme can be obtained by successively blowing-up $\widehat{X}$ along  $Z_1, \ldots,Z_q$.
	It arises from the $\NN^q$-graded quasi-coherent $\OO_{\widehat{X}}$-algebra
	$$
	\sR = \sR\big((\mathcal{J}_1,t), \ldots, (\mathcal{J}_q,t)\big) \;:=\; \bigoplus_{(n_1,\ldots,n_q) \in \NN^q} (\mathcal{J}_1,t)^{n_1} \cdots (\mathcal{J}_q,t)^{n_q};
	$$
	i.e., the multi-Rees algebra of the ideal sheaves $(\mathcal{J}_1,t), \ldots, (\mathcal{J}_q,t) \subset \OO_{\widehat{X}}$.
	As we wish to embed $\widehat{\mathcal{B}}$ in a product of projective spaces, we consider the ``twisted''  $\NN^q$-graded algebra 
	$$
	\sT \;:=\; \bigoplus_{(n_1,\ldots,n_q) \in \NN^q} (\mathcal{J}_1,t)^{n_1} \cdots (\mathcal{J}_q,t)^{n_q} \,\otimes_{\OO_{\widehat{X}}} \, \OO_{\widehat{X}}\left(n_1 \dd_1 + \cdots + n_q \dd_q\right).
	$$
	The algebras $\sR$ and $\sT$ yield the same scheme $\widehat{\mathcal{B}}$ (see \cite[Lemma II.7.9]{HARTSHORNE} for the $\NN$-graded case).
	Since the sheaf $(\mathcal{J}_i, t)(\dd_i)$ is generated by global sections, we have a surjection $\OO_{\widehat{X}}^{r_i+1} \surjects (\mathcal{J}_i, t)(\dd_i)$ for some $r_i \ge 0$.
	In turn, this naturally gives the surjection
	$$
	\Sym_{\OO_{\widehat{X}}}\left(\OO_{\widehat{X}}^{r_1+1}\right) \otimes_{\OO_{\widehat{X}}} \cdots \otimes_{\OO_{\widehat{X}}} \Sym_{\OO_{\widehat{X}}}\left(\OO_{\widehat{X}}^{r_q+1}\right) \;\surjects\; \sT.
	$$
	Consequently, we obtain a closed immersion $\iota : \widehat{\mathcal{B}} \hookrightarrow \PP_{\widehat{X}}^{r_1} \times_{\widehat{X}} \cdots \times_{\widehat{X}} \PP_{\widehat{X}}^{r_q} \cong  \widehat{X} \times_R  \PP_{R}^{r_1} \times_{R} \cdots \times_{R} \PP_{R}^{r_q}$.
	Then we set 
	$$
	\OO_{\widehat{\mathcal{B}}}(\ee_i) \,:=\, \iota^*\OO_{\PP_{\widehat{X}}^{r_1} \times_{\widehat{X}} \cdots \times_{\widehat{X}} \PP_{\widehat{X}}^{r_q}}(\ee_i)
	$$
	for any $\ee_i \in \NN^q$ with $1 \le i \le q$. 
	Thus our choice of $\OO_{\widehat{\mathcal{B}}}(\ee_i)$ equals the corresponding tautological line bundle determined by $\sR$ but twisted by $\OO_{\widehat{X}}(\dd_i)$.
	
	By an abuse of notation, we shall write $\OO_{X} (\ee_i)$ for some $\ee_i \in \NN^p$ with $1 \le i \le p$ and $\OO_{\widehat{\mathcal{B}}} (\ee_j)$ for some $\ee_j \in \NN^q$ with $1 \le j \le q$.
	Thus, depending on the context,  $\ee_i$ could denote an elementary basis vector either in $\NN^p$ or in $\NN^q$.
	
	Let $M$ be a finitely generated $\ZZ^p$-graded $\sB$-module and $\mathscr{M} := \widetilde{M}$ be the corresponding coherent sheaf on $X$.
	Let $\widehat{\mathscr{M}} := \mathscr{M}[t]$ be the $\OO_{\widehat{X}}$-module given as the pullback of $\mathscr{M}$.
	Denote by $\widehat{\mathcal{B}}(\sM)$ the coherent $\OO_{\widehat{\mathcal{B}}}$-module associated to the following $\NN^q$-graded  $\sT$-module
	$$
	T(\mathscr{M}) \;:=\;  \bigoplus_{(n_1,\ldots,n_q) \in \NN^q} (\mathcal{J}_1,t)^{n_1} \cdots (\mathcal{J}_q,t)^{n_q} \widehat{\mathscr{M}} \;\otimes_{\OO_{\widehat{X}}} \, \OO_{\widehat{X}}\left(n_1 \dd_1 + \cdots + n_q \dd_q\right), 
	$$
	which is a quasi-coherent $\OO_{\widehat{X}}$-module.
	
	The main objects of interest in this section are introduced in the following definition.
	
	\begin{definition}
		\label{def_KT_transform}
		Let $P = P_X^{Z_1,\ldots,Z_q} := \widehat{\mathcal{B}} \times_{\widehat{X}} Z$ be the restriction of $\widehat{\mathcal{B}}$ to the preimage of $Z$.
		Let $\pi : P \rightarrow X$ be the natural projection and $j : P \hookrightarrow \widehat{\mathcal{B}}$ be the natural closed immersion.
		We say that the \emph{Kleiman--Thorup transform} of $\sM$ with respect to $Z_1,\ldots,Z_q$ is  the coherent $\OO_P$-module given by
		\[
		P(\sM) = P_X^{Z_1,\ldots,Z_q}(\sM) \;:=\; j^*\left(\widehat{\mathcal{B}}(\mathscr{M})\right). 
		\]	
	\end{definition}
	
	The lemma below provides a basic dimension computation that will be needed (also, see  \cite[\S 3]{KLEIMAN_THORUP_GEOM}, \cite[Chapter 5]{huneke2006integral}).
	
	\begin{lemma}
		\label{lem_dim_KT}
		We have the inequality $\dim\left(\Supp(P(\sM))\right) \le \dim\left(\Supp(\sM)\right)$.
	\end{lemma}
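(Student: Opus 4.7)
The plan is to reduce to the case where $\sM$ is supported on an integral closed subscheme, and then verify the dimension bound chart by chart on a cover of $\widehat{\mathcal{B}}$ by the standard affine opens of the multi-Proj.

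\textbf{Reduction.} The functor $\sM \mapsto P(\sM)$ is right-exact, as it factors as flat pullback to $\widehat{X}$, componentwise multiplication by the ideal sheaves $(\mathcal{J}_i,t)$ (right-exact on submodules of $\widehat{\sM}$), passage to the associated multigraded sheaf on $\widehat{\mathcal{B}}$, and pullback along the closed immersion $j$.  Hence any short exact sequence $0 \to \sM' \to \sM \to \sM'' \to 0$ yields $\Supp(P(\sM)) \subseteq \Supp(P(\sM')) \cup \Supp(P(\sM''))$, and a prime filtration of $\sM$ reduces the lemma to the case $\sM = \iota_*\OO_V$ for some integral closed subscheme $V \subseteq X$; then $\dim\Supp(\sM) = \dim V =: d$, and one needs $\dim\Supp(P(\sM)) \le d$.

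\textbf{Local chart analysis.} Cover $\widehat{\mathcal{B}}$ by the standard affine opens of the multi-Proj: each chart $U$ is determined by a choice of local generator $g_i \in (\mathcal{J}_i,t)$ for every $i$, and carries the coordinate ring $\OO_U = \OO_{\widehat{X}}[(\mathcal{J}_i,t)/g_i : i]$ inside $\OO_{\widehat{X}}[1/g_1, \ldots, 1/g_q]$.  Writing $M := \OO_V$, the localization $T(\sM)|_U$ either vanishes (when some $g_i$ restricts to zero on $V$) or else equals the subring $M[t]\bigl[(\mathcal{J}_i M, t)/g_i : i\bigr]$ of the domain $M[t][1/g_1, \ldots, 1/g_q]$.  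In the nontrivial case, $\Spec T(\sM)|_U$ is an affine patch of the multi-blow-up of the integral scheme $V \times_{\Spec(R)} \mathbb{A}^1_R$ (of dimension $d+1$) along the restricted ideals $(\mathcal{J}_i M, t)$, so $T(\sM)|_U$ is a Noetherian domain of dimension at most $d+1$, and the injection $M[t] \hookrightarrow T(\sM)|_U$ exhibits $t$ as a nonzero element, hence as a non-zero-divisor.  The defining ideal of $P$ in $\widehat{\mathcal{B}}$ is $\mathcal{J}\OO_{\widehat{\mathcal{B}}} + (t)\OO_{\widehat{\mathcal{B}}} \supseteq (t)\OO_{\widehat{\mathcal{B}}}$, so $P(\sM)|_U$ is a quotient of $T(\sM)|_U/(t)$, and hence $\dim P(\sM)|_U \le \dim T(\sM)|_U - 1 \le d$.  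Maximizing over the finite chart cover gives $\dim\Supp(P(\sM)) \le d = \dim\Supp(\sM)$.

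\textbf{Main obstacle.} The principal technical point is the dimension estimate $\dim T(\sM)|_U \le d+1$.  My intended argument is geometric: the multi-Proj of the multi-Rees algebra of nonzero ideals on an integral Noetherian scheme stays integral and is birational to the base, so its dimension equals that of the base.  A ring-theoretic alternative is the dimension inequality applied to the finitely generated birational extension $M[t] \subseteq T(\sM)|_U \subseteq M[t][1/g_1, \ldots, 1/g_q]$, whose rightmost term is a localization of $M[t]$ at non-zero elements.
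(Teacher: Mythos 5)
Your chart-by-chart analysis in the second step is fine as far as it goes, but the reduction that feeds it has a genuine gap. The functor $\sM \mapsto P(\sM)$ is \emph{not} right exact: the operation $N \mapsto (\mathcal{J}_i,t)N$ does not preserve cokernels. Concretely, for a short exact sequence $0 \to \sM' \to \sM \to \sM'' \to 0$ one only gets a surjection $T(\sM) \twoheadrightarrow T(\sM'')$ whose kernel $Q$ has graded pieces $(\mathcal{J}_1,t)^{n_1}\cdots(\mathcal{J}_q,t)^{n_q}\widehat{\sM} \cap \widehat{\sM'}$; by Artin--Rees this is a Rees-type module of a $(\mathcal{J},t)$-stable filtration of $\widehat{\sM'}$, which in general strictly contains $T(\sM')$ and is not isomorphic to it, nor does the discrepancy vanish after sheafification (already $A=k[x]$, $I=(x)$, $\sM' = (x^2) \subset \sM = A$ shows the failure of exactness in the middle). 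Hence the asserted inclusion $\Supp(P(\sM)) \subseteq \Supp(P(\sM')) \cup \Supp(P(\sM''))$ is not justified, and the prime-filtration reduction to $\sM = \iota_*\OO_V$ with $V$ integral does not go through as written. This matters because your second step uses integrality essentially (domains, birationality, the dimension inequality for finitely generated birational extensions), so the argument does not close for a general finitely generated $\ZZ^p$-graded module. To repair it you would either have to prove the dimension bound for Rees modules of arbitrary $(\mathcal{J},t)$-stable filtrations (so that the correction modules $Q$ arising in the filtration can be handled), or abandon the reduction altogether.

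For comparison, the paper's proof needs no reduction and no integrality: over an affine open $\Spec(A) \subseteq X$ with $\sM$ given by $N$, the restriction of $P(\sM)$ is the sheaf on the relative multiprojective scheme associated to $L/(J_1\cdots J_q,t)L$, where $L = \Rees_{N[t]}\big((J_1,t),\ldots,(J_q,t)\big)$. Since $t$ is a nonzerodivisor on $L$ (its graded pieces are submodules of $N[t]$) and $\dim(L) \le \dim(N[t]) + q = \dim(N)+1+q$, one gets $\dim\big(L/(J_1\cdots J_q,t)L\big) \le \dim(N)+q$, and subtracting $q$ when passing to the $\NN^q$-graded multiProj gives the bound for arbitrary $\sM$ directly. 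Your birational/dimension-inequality argument is a valid and somewhat more geometric route for the integral case, but without a correct reduction it only proves a special case of the lemma.
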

	\begin{proof}
		Let $U = \Spec(A) \subset X$ be an affine open subscheme and $N$ be a finitely generated $A$-module with $\mathscr{M}{\mid_U} = \widetilde{N}$.
		Let $J_i \subset A$ be the ideal corresponding to $U \cap Z_i$, and set $K_i = (J_i, t)A[t]$.
		Consider the module $L= \Rees_{N[t]}\left(K_1,\ldots,K_q\right) := (N[t])\left[K_1T_1,\ldots,K_qT_q\right]$
		over the multi-Rees algebra $\Rees_{A[t]}\left(K_1,\ldots,K_q\right) := (A[t])\left[K_1T_1,\ldots,K_qT_q\right]$ with standard $\NN^q$-grading induced by setting $\deg(t)  =\mathbf{0} \in \NN^q$ and $\deg(T_i) = \ee_i \in \NN^q$.
		Notice that $P(\sM){\mid_{\pi^{-1}(U)}}$ is the sheaf associated to the $\ZZ^q$-graded module $L/(J_1\cdots J_q, t)L$.
		Since  $\dim\left(L/(J_1\cdots J_q,t)L\right) \le \dim(L) - 1 \le \dim(N) + q$, it follows that $\dim\big(\Supp(P(\sM){\mid_{\pi^{-1}(U)}})\big) \le \dim(N)$ (see, e.g., \cite[\S 1]{HYRY_MULTIGRAD}, \cite[\S 3.1]{SPECIALIZATION_ARON}), as required.
	\end{proof}
	
	We have the following theorem that allows us to define mixed Buchsbaum--Rim multiplicities. 
	It extends the results of \cite[\S 8]{KLEIMAN_THORUP_MIXED} to an arbitrary multigraded setting.
	
	\begin{remark}
		As in the work of Kleiman and Thorup \cite{KLEIMAN_THORUP_GEOM, KLEIMAN_THORUP_MIXED}, the intersection numbers stated below are well-defined by utilizing the rudiments of intersection theory developed by Thorup in \cite{THORUP_INT_TH}, which are suitable to work over our base Noetherian local ring $R$. 
		This treatment of Thorup extends the necessary results from Fulton's book \cite{FULTON_INTERSECTION_THEORY}.
	\end{remark}

	\begin{theorem}
		\label{thm_poly_KT}
		Assume \autoref{setup_kleiman_thorup}.
		Let $M$ be a finitely generated $\ZZ^p$-graded $\sB$-module and $\sM = \widetilde{M}$ be the corresponding coherent $\OO_X$-module.
		Set $r = \dim(\Supp(\sM))$, and suppose that $Z \cap \Supp(\sM)$ is contained in the closed fiber $X \times_{\Spec(R)} \Spec(R/\mm)$ of $X$.
		Then the following function
		$$
		\lambda_M^{H_1,\ldots,H_q}(v_1,\ldots,v_p,n_1,\ldots,n_q) \;=\; \length_R\left(
		\frac{[M]_{(v_1,\ldots,v_p) + n_1\dd_1 + \cdots + n_q\dd_q}}
		{H_1^{n_1}\cdots H_q^{n_q} \, [M]_{(v_1,\ldots,v_p)}}\right)
		$$
		coincides with a polynomial $P_M^{H_1,\ldots,H_q}(v_1,\ldots,v_p,n_1,\ldots,n_q)$ for all $v_i \gg 0$ and $n_i \gg 0$. 
		Moreover, $P_M^{H_1,\ldots,H_q}$ has total degree at most $r$ and we can write
		\[
		P_M^{H_1,\ldots,H_q}(v_1,\ldots,v_p,n_1,\ldots,n_q) \;=\;  \sum_{\substack{\alpha\in \NN^p, \beta \in \NN^q\\|\alpha|+|\beta|=r}} e(\alpha, \beta) \, \frac{v_1^{\alpha_1}\cdots v_p^{\alpha_p}n_1^{\beta_1}\cdots n_q^{\beta_q}}{\alpha_1!\cdots\alpha_p!\beta_1!\cdots\beta_q!} \;+\; \text{\rm(lower degree terms)}
		\]
		where the normalized coefficient $e(\alpha,\beta)$ equals the intersection number 
		\[
		\int_P c_1\left(\pi^*(\OO_X(\ee_1))\right)^{\alpha_1}\cdots c_1\left(\pi^*(\OO_X(\ee_p))\right)^{\alpha_p}\, c_1\left(\OO_P(\ee_1)\right)^{\beta_1}\cdots c_1\left(\OO_P(\ee_q)\right)^{\beta_q} \,\cap \, \left[P(\sM)\right]_r.
		\]
	\end{theorem}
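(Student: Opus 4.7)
The plan is to adapt the strategy used by Kleiman--Thorup in \cite[\S 8]{KLEIMAN_THORUP_MIXED} for the $\NN$-graded setting to the present arbitrary multigraded situation. The heart of the argument is to reinterpret the length function $\lambda_M^{H_1,\ldots,H_q}$ as an Euler characteristic of a suitably twisted coherent sheaf on the Kleiman--Thorup transform $P = P_X^{Z_1,\ldots,Z_q}$, and then to invoke a multigraded asymptotic Riemann--Roch theorem in the form developed by Thorup \cite{THORUP_INT_TH}, which allows the Noetherian local base $R$, in order to extract both the polynomiality and the explicit form of the leading coefficients as intersection numbers.

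\medskip

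The first step is to establish, for $v_i, n_j$ sufficiently large, an identity of the shape
\[
\lambda_M^{H_1,\ldots,H_q}(v_1,\ldots,v_p,n_1,\ldots,n_q) \;=\; \chi\!\left(P,\; P(\sM) \otimes \pi^*\OO_X(v_1,\ldots,v_p) \otimes \OO_P(n_1,\ldots,n_q)\right).
\]
The auxiliary variable $t$ entering the construction of $\widehat{X} = X \times_R \bbA_R^1$ plays the key role of a book-keeping device: the factor $(\mathcal{J}_i, t)^{n_i}$ in the definition of $\sT$ packages together the ideals $\mathcal{J}_i^{k}\, t^{n_i - k}$ for $0 \le k \le n_i$, and the subsequent restriction to the divisor $\{t = 0\} \simeq X$ followed by restriction to $Z = V(\mathcal{J}_1\cdots\mathcal{J}_q)$ isolates precisely the numerator/denominator quotient appearing in $\lambda_M^{H_1,\ldots,H_q}$. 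Concretely, I would compute global sections of $\widehat{\mathcal{B}}(\sM)$ in degree $(n_1,\ldots,n_q)$ twisted by $\pi^*\OO_X(v_1,\ldots,v_p)$, match them with the graded piece $[M]_{(v_1,\ldots,v_p)+n_1\dd_1+\cdots+n_q\dd_q}$, and then pass to the fiber over $Z$ to produce the quotient by $H_1^{n_1}\cdots H_q^{n_q}[M]_{(v_1,\ldots,v_p)}$. The hypothesis that $Z\cap \Supp(\sM)$ lies in the closed fiber of $X \to \Spec(R)$ is used twice: it guarantees finiteness of $R$-length for the relevant quotients, and it forces $P(\sM)$ to be supported in the closed fiber of $P \to \Spec(R)$, which combined with Serre vanishing for large twists reduces the Euler characteristic to $\length_R H^0$.

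\medskip

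With this cohomological reformulation in hand, polynomiality and the degree bound follow from the standard multigraded Snapper--Riemann--Roch theorem: for a coherent sheaf $\mathscr{F}$ on a proper scheme over $R$ supported in the closed fiber and embedded in a product of projective spaces with very ample line bundles $\mathcal{L}_1, \ldots, \mathcal{L}_{p+q}$, the Euler characteristic $\chi(\mathscr{F}\otimes \mathcal{L}_1^{v_1}\otimes\cdots\otimes\mathcal{L}_{p+q}^{n_q})$ is a numerical polynomial of total degree at most $\dim(\Supp(\mathscr{F}))$ for large twists, whose normalized leading coefficients are the intersection numbers $\int c_1(\mathcal{L}_\bullet)^{(\alpha,\beta)}\cap [\mathscr{F}]_r$. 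Applying this with $\mathcal{L}_i = \pi^*\OO_X(\ee_i)$ for $1 \le i \le p$ and $\mathcal{L}_{p+j} = \OO_P(\ee_j)$ for $1 \le j \le q$, together with the bound $\dim(\Supp(P(\sM))) \le r$ from \autoref{lem_dim_KT}, yields the polynomial form and the claimed expression for the leading coefficients.

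\medskip

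The main technical obstacle is Step~1, namely the verification of the cohomological identification. One must carefully track the interaction between the $\NN^p$-grading on $\widehat{X}$ inherited from $X$ and the $\NN^q$-grading on $\widehat{\mathcal{B}}$, as well as the twist by $\OO_{\widehat{X}}(n_1\dd_1+\cdots+n_q\dd_q)$ that compensates for the degree shifts induced by the submodules $H_i$. Once the precise degree bookkeeping is done, the matching of multigraded pieces of $H^0(\widehat{\mathcal{B}}(\sM))$ with $[M]_{(v_1,\ldots,v_p)+n_1\dd_1+\cdots+n_q\dd_q}$, and the passage to the fiber over $Z$ producing the quotient by $H_1^{n_1}\cdots H_q^{n_q}[M]_{(v_1,\ldots,v_p)}$, both proceed by essentially the same mechanism as in the $\NN$-graded case, provided one verifies at each step that the multigraded refinements remain well-behaved.
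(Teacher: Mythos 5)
You follow the same route as the paper: interpret the length function through twists of $P(\sM)$ on the Kleiman--Thorup transform, get polynomiality and the intersection-number leading coefficients from Thorup's Snapper-type result via the closed embedding of $P$ into a product of projective spaces over $R$, and control the degree with \autoref{lem_dim_KT}. Those outer steps are fine. The problem is that the identity you label Step~1 --- which is where essentially all of the work in the paper's proof lies --- is asserted and explicitly deferred rather than proved. Concretely, to pass from cohomology on $P$ to graded pieces of a module you cannot simply ``match global sections'' and quote Serre vanishing (which, for simultaneous twists by $\pi^*\OO_X(\ee_i)$ and $\OO_P(\ee_j)$, is itself a multigraded statement needing justification). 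The paper instead pushes forward along $\pi$, using \cite[Lemma 4.2]{KLEIMAN_THORUP_GEOM} to get $\pi_*\big(P(\sM)\otimes\OO_P(\fn)\big)\cong\big[\overline{T}(\sM)\big]_\fn$ and the vanishing of $R^i\pi_*$ for $\fn\gg\mathbf{0}$, runs the Leray spectral sequence, and then needs the multigraded local cohomology vanishing $\big[\HH^i_{B_{++}}\big(\overline{T}(M)\big)\big]_\fv=0$ for $\fv\gg\mathbf{0}$ (a Chardin-type result, cf.\ \cite[Theorem 2.1]{CHARDIN_POWERS}) together with \cite[Corollary 1.5]{HYRY_MULTIGRAD} to identify $\HH^0\big(X,\overline{T}(\sM)(\fv)\big)$ with $\big[\overline{T}(M)\big]_\fv$ and kill the higher cohomology. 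In the multigraded setting this sheaf-to-module comparison ``for $\fv\gg\mathbf{0}$'' is precisely the delicate point, and your sketch offers no substitute for it.

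Moreover, the algebraic bookkeeping you acknowledge as the main obstacle does not come out as the identity you wrote down. Expanding $(H_1,t)^{n_1}\cdots(H_q,t)^{n_q}M[t]=\sum_{k}L_k\,t^{|\fn|-k}M[t]$ with $L_k=\sum_{j_1+\cdots+j_q=k}H_1^{j_1}\cdots H_q^{j_q}$, and using $H_1\cdots H_q L_kM\subseteq L_{k+1}M$, the degree-$(\fv,\fn)$ length of the restricted transform equals $\lambda_M^{H_1,\ldots,H_q}(\fv-\dd_1-\cdots-\dd_q,\,\fn+\mathbf{1})$, i.e.\ a shifted value of $\lambda$, not $\lambda_M^{H_1,\ldots,H_q}(\fv,\fn)$ itself; one then still has to note that this shift leaves the top-degree part of the polynomial unchanged. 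Without carrying out this computation and the cohomological comparison above, the proposal records the statement to be proved rather than a proof of it.
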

	\begin{proof}
		From our conventions above, we can identify $P$ as a closed subscheme of a product of projective spaces over $R$ via the following composition of natural closed immersions
		\begin{align*}
			f \;:\; P \;\hookrightarrow\; \widehat{\mathcal{B}} \times_{\Spec(R[t])} \Spec\left(R[t]/(t)\right) \;\hookrightarrow\;  X \times_R&  \PP_{R}^{r_1} \times_{R} \cdots \times_{R} \PP_{R}^{r_q} \\ &\;\hookrightarrow\; \PP := \PP_{R}^{m_1} \times_{R} \cdots \times_{R} \PP_{R}^{m_p} \times_{R} \PP_{R}^{r_1} \times_{R} \cdots \times_{R} \PP_{R}^{r_q}.
		\end{align*}
		For any $\fv = (v_1,\ldots,v_p) \in \NN^p$ and $\fn = (n_1,\ldots,n_q) \in \NN^q$, the pullback of $\OO_\PP(\fv,\fn)$ to $P$ is given by 
		$$
		f^*\big(\OO_{\PP}(\fv, \fn)\big) \;\cong\;  \pi^*\big(\OO_X(\fv)\big) \,\otimes_{\OO_P}\, \OO_{P}(\fn).
		$$
		To simplify notation, let $\FF := P(\sM)$, $\FF(\bn) := P(\sM) \otimes_{\OO_P} \OO_{P}(\bn)$ and 
		$$
		\FF(\fv,\fn) \;:=\; P(\sM) \otimes_{\OO_P} f^*\big(\OO_{\PP}(\fv, \fn)\big) \;\cong\; P(\sM) \otimes_{\OO_P} \pi^*\big(\OO_X(\fv)\big) \otimes_{\OO_P} \OO_{P}(\fn)
		$$
		for any $\fv \in \NN^p$ and $\fn \in \NN^q$.
		From \cite[Lemma 4.3]{KLEIMAN_THORUP_GEOM}, it follows that the function 
		\begin{align}
			\label{eq_Hilb_poly_H0}
			\begin{split}
				\chi\big(f_*(\FF)(\fv,\fn)\big) &= \sum_i {(-1)}^i \length_R\big(\HH^i\left(\PP, f_*(\FF)(\fv,\fn)\right) \big) \\
				&= \sum_i {(-1)}^i \length_R\big(\HH^i\left(P, \FF(\fv,\fn) \right)\big)
			\end{split}		
		\end{align}
		equals a polynomial $P_\FF(\bv, \bn)$ of total degree equal to $\dim(\Supp(\FF)) \le r = \dim(\Supp(\sM))$ (see \autoref{lem_dim_KT}).
		For $\alpha \in \NN^p, \beta \in \NN^q$ with $|\alpha| + |\beta| = r$, the coefficient of $\fv^\alpha\fn^\beta$ in $P_\FF(\fv,\fn)$ is equal to $\frac{1}{\alpha_1!\cdots\alpha_p! \beta_1! \cdots \beta_q!}$ times the intersection number 
		\[
		\int_\PP \,
		c_1\left(\OO_\PP(\ee_1,\mathbf{0})\right)^{\alpha_1}\cdots c_1\left(\OO_\PP(\ee_p,\mathbf{0})\right)^{\alpha_p} \, c_1\left(\OO_\PP(\mathbf{0},\ee_1)\right)^{\beta_1}\cdots c_1\left(\OO_\PP(\mathbf{0},\ee_q)\right)^{\beta_q} \,\cap \, \left[f_*(\FF)\right]_r;
		\]
		where, depending on the context, we have either $\mathbf{0} = (0,\ldots,0) \in \NN^p$ or $\mathbf{0} = (0,\ldots,0) \in \NN^q$.
		By the projection formula, this latter intersection number coincides with $e(\alpha,\beta)$.

		We need two observations regarding the natural projection $\pi : P \subset \PP_{X}^{r_1} \times_X \cdots \times_X \PP_{X}^{r_q} \rightarrow X$.
		We set $\overline{T}(\sM) := T(\sM) \otimes_{\OO_{\widehat{X}}} \OO_Z$.
		For any affine open subscheme $U \subset X$, by utilizing \cite[Lemma 4.2]{KLEIMAN_THORUP_GEOM}, we obtain that $\pi_*\left(\FF(\bn)\right){\mid_U} \cong \left[\overline{T}(\sM)\mid_U\right]_\bn$ and $R^i\pi_*\left(\FF(\bn)\right){\mid_U} = 0$ for all $\bn \gg \mathbf{0}$ and $i \ge 1$.
		Hence we globally get 
		$$
		\pi_*\left(\FF(\bn)\right) \;\cong\; \left[\overline{T}(\sM)\right]_\bn \qquad \text{ and }    \qquad R^i\pi_*\left(\FF(\bn)\right) = 0
		$$
		for all $\bn \gg \mathbf{0}$ and $i \ge 1$.
		So, the Leray spectral sequence $$E_2^{i,j} = \HH^i\left(X, R^j\pi_*(\FF(\fv,\fn))\right) \;\Rightarrow\; \HH^{i+j}\left(P, \FF(\fv,\fn)\right)$$
		degenerates and yields the isomorphism 
		\begin{equation}
			\label{eq_projection_H0}
			\HH^i\big(P, \FF(\fv, \bn)\big) \;\cong\; \HH^i\big(X, \left[\overline{T}(\sM)\right]_\bn(\fv)\big) 
		\end{equation}
		for all $i \ge 0$, $\bv \in \NN^p$ and  $\bn \gg \mathbf{0} \in \NN^q$.  
		
		We now study the algebraic counterpart of $\overline{T}(\sM)$.
		Given a $\ZZ^p$-graded $\sB$-module $N$, we define the $\ZZ^p$-graded $\sB$-module
		$$
		\overline{T}(N) \;:=\;  \bigoplus_{(n_1,\ldots,n_q) \in \NN^q} \frac{(H_1,t)^{n_1} \cdots (H_q,t)^{n_q} N[t]}{(H_1\cdots H_q,t)(H_1,t)^{n_1} \cdots (H_q,t)^{n_q}N[t]}\; \otimes_{\sB} \, \sB\left(n_1 \dd_1 + \cdots + n_q \dd_q\right),
		$$
		where $\deg(t) = \mathbf{0} \in \NN^p$.
		We consider the Noetherian $\ZZ^p$-graded algebra $S := \overline{T}(\sB)$.
		Notice that $S$ is a quotient of the multi-Rees algebra $\Rees_{B[t]}\left((H_1,t), \ldots, (H_q,t)\right) := (B[t])\big[(H_1,t)T_1,\ldots,(H_q,t)T_q\big]$ with $\ZZ^p$-grading induced by the grading of $B$ and by setting $\deg(T_i) = -\dd_i \in \ZZ^p$ and $\deg(t) = \mathbf{0} \in \NN^p$.
		Then $\overline{T}(M)$ is a finitely generated $\ZZ^p$-graded $S$-module.
		Notice that $B_{++} S = S_{++} := \bigoplus_{\fv \ge \mathbf{1}} [S]_\fv$, where $\mathbf{1} = (1,\ldots,1) \in \NN^p$.
		Therefore, we get the vanishing $\big[\HH_{B_{++}}^i\big(\overline{T}(M)\big)\big]_\fv \cong \big[\HH_{S_{++}}^i\big(\overline{T}(M)\big)\big]_\fv=0$ for all $i \ge 0$ and $\fv \gg \mathbf{0}$ (this follows verbatim to the $\ZZ$-graded result from \cite[Theorem 2.1]{CHARDIN_POWERS}).
		By utilizing \cite[Corollary 1.5]{HYRY_MULTIGRAD}, we obtain 
		\begin{equation}
			\label{eq_H0_X_to_T}
			\HH^0\big(X, \overline{T}(\sM)(\fv)\big) \;\cong \left[\overline{T}(M)\right]_\fv \quad \text{ and } \quad \HH^i\big(X, \overline{T}(\sM)(\fv)\big) = 0
		\end{equation}
		for all $\fv \gg \mathbf{0}$ and $i \ge 1$.
		
		By combining \autoref{eq_Hilb_poly_H0}, \autoref{eq_projection_H0} and \autoref{eq_H0_X_to_T}, we get tuples of integers $\fv' \in \NN^p$ and $\fn' \in \NN^q$, such that  		
		\begin{equation}
			\label{eq_poly_F_eq_E}
			P_\FF(\bv, \bn) \;=\; \length_R\left(E_{\fv,\fn}\right)
		\end{equation}
		for all $\fv \ge \fv'$ and $\fn \ge \fn'$, where 
		$$
		E_{\fv,\fn} \;:=\; \left[\frac{(H_1,t)^{n_1} \cdots (H_q,t)^{n_q} M[t]}{(H_1\cdots H_q,t)(H_1,t)^{n_1} \cdots (H_q,t)^{n_q}M[t]}\right]_{\fv + n_1\dd_1 + \cdots + n_q\dd_q}.
		$$
		We write 
		$$
		(H_1,t)^{n_1} \cdots (H_q,t)^{n_q} M[t] \;=\; \sum_{0 \le j_i \le n_i} H_1^{j_1}\cdots H_q^{j_q}\,t^{|\bn|-(j_1+\cdots+j_q)}\,M[t] \;=\; \sum_{k = 0}^{|\bn|} L_k t^{|\bn|-k}M[t],
		$$
		where $L_k$ is the $R$-module $\sum_{\substack{0 \le j_i \le n_i\\ j_1+\cdots+j_q = k}} H_1^{j_1}\cdots H_q^{j_q}$; by standard convention, we have $H_i^0 = R = [B]_\mathbf{0}$.
		Since $H_1\cdots H_q L_k M\subset L_{k+1}M$ for $0 \le k < |\bn|$, we get the equality
		$$
		(H_1\cdots H_q,t)(H_1,t)^{n_1} \cdots (H_q,t)^{n_q}M[t] \;=\; \Big(H_1\cdots H_q L_{|\bn|} + \sum_{k=1}^{|\bn|}L_kt^{|\bn|+1-k} + t^{|\bn|+1}\Big) M[t].
		$$
		Therefore, we obtain the isomorphism of $R$-modules
		$$
		E_{\fv,\fn} \;\cong\; \left[\frac{L_{|\bn|}M}{H_1\cdots H_qL_{|\bn|}M} \,\;\;\bigoplus\;\;\, \bigoplus_{k=0}^{|\bn|-1} \frac{L_{k}M}{L_{k+1}M}\right]_{\fv + n_1\dd_1 + \cdots + n_q\dd_q}
		$$
		and consequently the equality 
		\begin{equation}
			\label{eq_length_E}
			\length_R\left(E_{\fv,\fn}\right) = {\rm length}_R\left(
			\frac{[M]_{\fv + n_1\dd_1 + \cdots + n_q\dd_q}}
			{H_1^{n_1+1}\cdots H_q^{n_q+1} \, [M]_{\fv-\dd_1-\cdots-\dd_q}}\right) = \lambda_M^{H_1,\ldots,H_q}(\fv-\dd_1-\cdots-\dd_q, \fn+\mathbf{1}).
		\end{equation}
		Finally, \autoref{eq_poly_F_eq_E} and \autoref{eq_length_E} yield the equality $\lambda_M^{H_1,\ldots,H_q}(\fv, \fn) = P_\FF(\fv+\dd_1+\cdots+\dd_q, \fn-\mathbf{1})$ for all $\fv \ge \fv'-\dd_1-\cdots -\dd_q$ and $\fn \ge \fn'+\mathbf{1}$.
		The polynomials $P_\FF(\fv+\dd_1+\cdots+\dd_q, \fn-\mathbf{1})$ and $P_\FF(\fv, \fn)$ have the same top degree part.
		Therefore, by setting $P_M^{H_1,\ldots,H_q}(\fv,\fn):=P_\FF(\fv+\dd_1+\cdots+\dd_q, \fn-\mathbf{1})$, the proof of the theorem is complete.
	\end{proof}

	Finally, after having the result of \autoref{thm_poly_KT}, we can make the following definition.
	
	\begin{definition}
		\label{def_mixed_BR}
		Under the notation and assumptions of \autoref{thm_poly_KT}, for any $\alpha\in\NN^p$ and  $\beta \in \NN^q$ with $|\alpha| + |\beta| \ge r$, we say that the \emph{mixed Buchsbaum--Rim multiplicity of $M$ of type $(\alpha,\beta)$ with respect to $H_1,\ldots,H_q$} is the following nonnegative integer
		$$
		\br_{\alpha,\beta}\left(H_1,\ldots,H_q; M\right) \;:=\; 
		\begin{cases}
			e(\alpha,\beta) & \;\; \text{ if } |\alpha| + |\beta| = r \\		
			0 & \;\; \text{ if } |\alpha| + |\beta| > r. 
		\end{cases}
		$$
		Notice that $\br_{\alpha,\beta}\left(H_1,\ldots,H_q; M\right)$ can be defined either in geometrical terms as an intersection number associated to the Kleiman--Thorup transform $P(\sM)$ or in algebraic fashion by utilizing the Hilbert-like function $\lambda_M^{H_1,\ldots,H_q}$. 
		For any $\beta \in \NN^p$ with $|\beta| \ge r$, we also define 
		$$
		\br_\beta(H_1,\ldots,H_q; M) \;:=\; \br_{\mathbf{0},\beta}\left(H_1,\ldots,H_q; M\right) \;=\; 	\int_P  c_1\left(\OO_P(\ee_1)\right)^{\beta_1}\cdots c_1\left(\OO_P(\ee_q)\right)^{\beta_q} \,\cap \, \left[P(\sM)\right]_r.
		$$
	\end{definition}

\section{Mixed $j$-multiplicities}
\label{sect_mixed_j_mult}	
	
In this short section, we quickly develop some basic results regarding a multigraded version of \emph{$j$-multiplicities} (see \cite{ACHILLES_MANARESI_J_MULT}, \cite[\S 6.1]{FLENNER_O_CARROLL_VOGEL}).
These results will be needed in the next section.
The setup below is used throughout this section.

\begin{setup}
	Let $(R, \mm)$ be a Noetherian local ring and $B$ be a standard $\NN^p$-graded $R$-algebra.
	Set $X = \multProj(B)$.
\end{setup}

Let $M$ be a finitely generated $\ZZ^p$-graded $B$-module and $\sM := \widetilde{M}$ be the corresponding coherent $\OO_X$-module.
Recall that $\dim(\sM) = \dim\big(M/\HH_{B_{++}}^0(M)\big)-p$ (see, e.g., \cite[\S 1]{HYRY_MULTIGRAD}, \cite[\S 3.1]{SPECIALIZATION_ARON}).
Notice that $\HL^0(M)$ is finitely generated over $B/\mm^kB$ for $k \gg 0$, and this latter ring is a standard $\NN^p$-graded algebra over the Artinian local ring $R/\mm^k$.
Then we may define the mixed $j$-multiplicities of $M$ as the mixed multiplicities of $\HL^0(M)$.
For details on the notion of \emph{mixed multiplicities}, the reader is referred to \cite{MIXED_MULT,HERMANN_MULTIGRAD}.
Fix $\beta \in \NN^p$ with $|\beta| \ge \dim\big(\Supp(\sM)\big)$.
	
\begin{definition}
	The \emph{mixed $j$-multiplicity of $M$ of type $\beta$} is given by 
	$
	j_\beta(M) := e\big(\beta;\, \HL^0(M)\big).
	$
\end{definition}
	
	We have the following expected results (cf., \cite[\S 6.1]{FLENNER_O_CARROLL_VOGEL}).
	
\begin{lemma}
	\label{lem_j_mult_add}
	Let $0 \rightarrow M' \rightarrow M \rightarrow M'' \rightarrow 0$ be a short exact sequence of finitely generated $\ZZ^p$-graded $B$-modules. 
	Then we have the equality
	$
	j_\beta(M) \;=\; j_\beta(M') + j_\beta(M'').
	$
\end{lemma}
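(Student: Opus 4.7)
The strategy is to apply the left exact functor $\HL^0$ to the given short exact sequence, extract from the resulting long exact sequence in local cohomology two short exact sequences of modules annihilated by a uniform power of $\mm$, and then invoke the classical additivity of mixed multiplicities on short exact sequences of finitely generated modules over a standard multigraded algebra with Artinian base.

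First, applying $\HL^0$ yields
\[
0 \to \HL^0(M') \to \HL^0(M) \to \HL^0(M'') \to \HL^1(M').
\]
Setting $K := \mathrm{Image}\bigl(\HL^0(M) \to \HL^0(M'')\bigr)$ and $C := \HL^0(M'')/K$, we obtain two short exact sequences
\[
0 \to \HL^0(M') \to \HL^0(M) \to K \to 0, \qquad 0 \to K \to \HL^0(M'') \to C \to 0,
\]
together with an inclusion $C \hookrightarrow \HL^1(M')$. Next, I would choose $k \gg 0$ such that $\mm^k$ annihilates each of the finitely generated $\mm$-torsion modules $\HL^0(M')$, $\HL^0(M)$, and $\HL^0(M'')$; then $K$ and $C$ are also annihilated by $\mm^k$, so all the modules above are finitely generated $\ZZ^p$-graded modules over the standard $\NN^p$-graded algebra $B/\mm^k B$ with Artinian base $R/\mm^k$. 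The classical additivity of mixed multiplicities on short exact sequences in this setting (see \cite{HERMANN_MULTIGRAD, MIXED_MULT}) then gives
\[
e(\beta; \HL^0(M)) = e(\beta; \HL^0(M')) + e(\beta; K), \qquad e(\beta; \HL^0(M'')) = e(\beta; K) + e(\beta; C),
\]
and subtracting reduces the lemma to the single vanishing $e(\beta; C) = 0$.

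The main obstacle is precisely this vanishing, and it is where the assumption $|\beta| \ge \dim \Supp \sM$ enters. The key point is that through the inclusion $C \hookrightarrow \HL^1(M')$, each graded piece $[C]_\fv$ sits inside $H^1_\mm([M']_\fv)$. Combining the formula $\dim \widetilde{M'} = \dim(M'/\HL^0(M')) - p \le \dim \Supp \sM$ (recalled from the setup, with $\widetilde{M'}$ a subsheaf of $\sM$) with a standard dimension bound for the graded pieces of $\HL^1$ shows that the Hilbert polynomial $\fv \mapsto \length_R([C]_\fv)$ has total degree strictly less than $\dim \Supp \sM \le |\beta|$. Hence $e(\beta; C) = 0$, completing the proof.
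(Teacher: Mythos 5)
Your reduction is exactly the paper's: from the four-term exact sequence $0 \to \HL^0(M') \to \HL^0(M) \to \HL^0(M'') \to C \to 0$ (your $C$ is the paper's $Q$) and additivity of mixed multiplicities over $B/\mm^k B$, everything comes down to the single vanishing $e(\beta;C)=0$, and that is precisely where your argument has a genuine gap. Knowing that $[C]_\fv$ sits inside $\HH^1_{\mm}\big([M']_\fv\big)$ gives no control whatsoever on $\length_R\big([C]_\fv\big)$: these graded pieces of $\HL^1(M')$ are $\mm$-power torsion but typically of infinite length (already for $B=R[x]$ with $R$ a DVR one has $[\HL^1(B)]_\fv \cong \HH^1_\mm(R)\otimes_R [B]_\fv \neq 0$ of infinite length in every degree), so there is no ``standard dimension bound for the graded pieces of $\HL^1$'' that can be invoked. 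Moreover, what actually governs the degree of the Hilbert polynomial of $C$ is $\dim\Supp(\widetilde{C}) = \dim\big(C/\HH^0_{B_{++}}(C)\big)-p$, i.e.\ torsion with respect to the irrelevant ideal $B_{++}$, not with respect to $\mm$; note also that you misquote the dimension formula, writing $\HL^0$ where the paper has $\HH^0_{B_{++}}$. Even the honest module-theoretic bound $\dim\Supp_B \HL^1(M') \le \dim(M')-1$ is not enough, since $\dim(M')-p$ can exceed $\dim\Supp(\sM)$ when $M'$ has large $B_{++}$-torsion; so the step ``hence the Hilbert polynomial of $C$ has total degree $< \dim\Supp(\sM)$'' is asserted, not proved.

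Your idea can be repaired, but only by passing from graded pieces to sheaves: sheafification is exact and commutes with the relevant colimits, so $\widetilde{C} \subseteq \widetilde{\HL^1(M')}$, which is the sheafified local cohomology of $\widetilde{M'}$ along the closed fiber, and the standard support bound for local cohomology (Grothendieck vanishing applied stalkwise) gives $\dim\Supp\big(\widetilde{\HL^1(M')}\big) \le \dim\Supp(\widetilde{M'})-1 \le \dim\Supp(\sM)-1$, whence $e(\beta;C)=0$ because $|\beta| \ge \dim\Supp(\sM)$. The paper avoids $\HL^1$ altogether and argues more simply: it localizes the four-term sequence at the minimal primes $\pp$ of $\Supp(\sM)$ lying in the closed fiber; there $M'_\pp$, $M_\pp$, $M''_\pp$ have finite length over $B_\pp$ and are killed by a power of $\mm$, so $\HL^0(-)_\pp = (-)_\pp$, the localized map $\HL^0(M)_\pp \to \HL^0(M'')_\pp$ is surjective, and $C_\pp=0$; this yields $\dim\Supp(\widetilde{C}) < \dim\Supp(\sM)$ directly. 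Either of these arguments closes the gap; as written, your key step does not.
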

\begin{proof}
The proof follows similarly to \cite[Proposition 6.1.2]{FLENNER_O_CARROLL_VOGEL}.
We have an exact sequence $0 \rightarrow \HL^0(M') \rightarrow \HL^0(M) \rightarrow \HL^0(M'')$.
Let $Q := \Coker\left(\HL^0(M) \rightarrow \HL^0(M'')\right)$.
For any minimal prime $\pp$ of $M$ that belongs to the closed fiber $X \times_{\Spec(R)} \Spec(R/\mm)$, we obtain $M_\pp' = \HL^0(M_\pp')$, $M_\pp = \HL^0(M_\pp)$ and $M_\pp'' = \HL^0(M_\pp'')$.
Hence $\dim(\Supp(\widetilde{Q})) < \dim(\Supp(\sM))$, and the result follows from the additivity of mixed multiplicities.
\end{proof}
	
\begin{corollary}
	We have that $j_\beta(M) = j_\beta\big(M/\HH_{B_{++}}^0(M)\big)$.
\end{corollary}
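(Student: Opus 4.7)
The plan is to apply the additivity lemma (the preceding lemma on short exact sequences) to the canonical short exact sequence
\[
0 \;\longrightarrow\; \HH_{B_{++}}^0(M) \;\longrightarrow\; M \;\longrightarrow\; M/\HH_{B_{++}}^0(M) \;\longrightarrow\; 0.
\]
This is a short exact sequence of finitely generated $\ZZ^p$-graded $B$-modules: $\HH_{B_{++}}^0(M) = \bigcup_n (0 :_M B_{++}^n)$ is a graded submodule of $M$, and it is finitely generated because $M$ is Noetherian so the ascending union stabilizes at some $(0 :_M B_{++}^k)$. Applying the previous lemma gives
\[
j_\beta(M) \;=\; j_\beta\big(\HH_{B_{++}}^0(M)\big) \;+\; j_\beta\big(M/\HH_{B_{++}}^0(M)\big),
\]
so it suffices to verify that the first summand on the right-hand side vanishes.

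Set $N := \HH_{B_{++}}^0(M)$ and fix $k \ge 1$ with $B_{++}^k \cdot N = 0$. Then $[N]_\fv = 0$ for every $\fv \in \NN^p$ with $\fv \ge k\cdot \mathbf{1}$. Since $\mm \subseteq [B]_{\mathbf{0}}$ is concentrated in multidegree $\mathbf{0}$, the submodule $\HL^0(N) = \bigcup_n (0 :_N \mm^n) \subseteq N$ is graded, and hence $[\HL^0(N)]_\fv = 0$ for all sufficiently large $\fv$. Therefore the Hilbert function of $\HL^0(N)$, viewed as a finitely generated $\NN^p$-graded module over $B/\mm^k B$ (a standard multigraded algebra over the Artinian ring $R/\mm^k$), vanishes for $\fv \gg \mathbf{0}$. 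Its Hilbert polynomial is identically zero, and in particular all of its mixed multiplicities vanish. Thus $j_\beta(N) = e(\beta; \HL^0(N)) = 0$, which concludes the proof.

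There is no real obstacle here; the only point requiring a moment of thought is that $\HL^0(N)$ is graded (so that one can read off its degree behaviour from that of $N$), which is immediate from the fact that $\mm$ is a homogeneous ideal of $B$.
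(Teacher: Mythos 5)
Your proof is correct and takes essentially the same route as the paper: apply the additivity lemma (\autoref{lem_j_mult_add}) to the sequence $0 \to \HH_{B_{++}}^0(M) \to M \to M/\HH_{B_{++}}^0(M) \to 0$ and note that $j_\beta\big(\HH_{B_{++}}^0(M)\big)=0$, which the paper asserts and you verify by showing the relevant Hilbert function vanishes in large degrees. One cosmetic slip: from $B_{++}^k N=0$ one only gets $[N]_\fv=0$ for $\fv \gg \mathbf{0}$ (one must also exceed the degrees of the generators of $N$), not necessarily for all $\fv \ge k\mathbf{1}$, but this weaker statement is exactly what the rest of your argument uses.
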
	
\begin{proof}
	Since $j_\beta\big(\HH_{B_{++}}^0(M)\big) = 0$, the result follows from \autoref{lem_j_mult_add}.
\end{proof}

\begin{lemma}
	\label{lem_assoc_j_mult}
	We have the equality 
	$$
	j_\beta(M) \;=\; \sum_{\pp} \length_{B_\pp}(M_\pp)\cdot e(\beta; B/\pp)
	$$
	where $\pp$ runs over the minimal primes of $M$ that belong to the closed fiber $X \times_{\Spec(R)} \Spec(R/\mm)$ and satisfy $\dim(B/\pp) = |\beta| + p$.
\end{lemma}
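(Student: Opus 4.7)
The plan is to apply the standard associativity formula for multigraded mixed multiplicities to the module $N := \HL^0(M)$ and translate the result back to $M$. By definition $j_\beta(M) = e\bigl(\beta;\HL^0(M)\bigr)$, and as noted in the section, $\HL^0(M)$ is finitely generated over $B/\mm^k B$ for $k \gg 0$, hence is naturally a graded module over a standard $\NN^p$-graded algebra over the Artinian local ring $R/\mm^k$, which is the setting in which mixed multiplicities of Herrmann et al.\ are defined.

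The classical associativity formula for mixed multiplicities (see e.g.\ \cite{HERMANN_MULTIGRAD, MIXED_MULT}) gives
$$
e\bigl(\beta;\HL^0(M)\bigr) \;=\; \sum_{\qqq} \length_{B_\qqq}\bigl((\HL^0(M))_\qqq\bigr) \cdot e(\beta; B/\qqq),
$$
where $\qqq$ ranges over the primes of $B$ that are minimal in $\Supp(\HL^0(M))$ and satisfy $\dim(B/\qqq) = |\beta| + p$; primes of smaller dimension contribute zero and can be added to the sum freely. Since $\Ass\bigl(\HL^0(M)\bigr) = \Ass(M) \cap V(\mm)$, the minimal primes of $\HL^0(M)$ of the prescribed dimension are exactly the minimal primes $\pp$ of $M$ lying in the closed fiber with $\dim(B/\pp) = |\beta| + p$, i.e.\ the primes appearing in the statement.

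It remains to identify the length factor, namely to verify that $(\HL^0(M))_\pp = M_\pp$ for each such $\pp$. Since $\pp$ is minimal in $\Supp(M)$, the $B_\pp$-module $M_\pp$ has finite length, so $\pp^N B_\pp$ annihilates $M_\pp$ for some $N \ge 1$. Because $\mm \subseteq \pp$, the same holds for $\mm^N B_\pp$, so $M_\pp$ is $\mm$-torsion and the canonical inclusion $(\HL^0(M))_\pp \hookrightarrow M_\pp$ is an equality. Substituting this identification into the associativity formula yields the claimed expression for $j_\beta(M)$. The only step with genuine content is this localization identity; everything else is a straightforward application of the classical associativity of multigraded mixed multiplicities combined with the identification of associated primes of $\HL^0(M)$.
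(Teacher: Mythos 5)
Your proposal is correct, but it follows a different route than the paper. The paper argues as in \cite{FLENNER_O_CARROLL_VOGEL}*{Proposition 6.1.3}: take a prime filtration of $M$ itself and apply the additivity of $j_\beta$ from \autoref{lem_j_mult_add}, so all the bookkeeping of which primes contribute is done at the level of $j$-multiplicities. You instead unwind the definition $j_\beta(M) = e\big(\beta; \HL^0(M)\big)$, invoke the classical associativity formula for multigraded mixed multiplicities over the Artinian base $R/\mm^k$, and transfer the answer back to $M$ via $\Ass\big(\HL^0(M)\big) = \Ass(M) \cap V(\mm)$ together with the localization identity $\big(\HL^0(M)\big)_\pp = M_\pp$; that identity is the genuine content of your transfer and you prove it correctly. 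What your route buys is that the filtration argument is outsourced to a citable classical statement; what it costs is the prime-matching step, which you state a bit too quickly: a minimal prime $\qqq$ of $\Supp\big(\HL^0(M)\big)$ could a priori be an embedded (non-minimal) prime of $M$, or an irrelevant prime, so the word ``exactly'' needs justification. Both possibilities are harmless, but one should say why: every relevant prime $\qqq \in \Supp(M)$ satisfies $\dim(B/\qqq) \le \dim(\Supp(\sM)) + p \le |\beta| + p$, with strict inequality as soon as $\qqq$ strictly contains a minimal prime of $M$, so embedded primes cannot reach dimension $|\beta|+p$; and irrelevant primes $\qqq \supseteq B_{++}$ have $[B/\qqq]_\fv = 0$ for $\fv \ge \mathbf{1}$, hence $e(\beta; B/\qqq) = 0$, so they contribute nothing even if their dimension happens to equal $|\beta|+p$. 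With these two observations added, your argument is complete and matches the statement of the lemma.
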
	
\begin{proof}
	The proof is similar to \cite[Proposition 6.1.3]{FLENNER_O_CARROLL_VOGEL}.
	However we now use the additivity result of \autoref{lem_j_mult_add}.
\end{proof}

	\section{New joint blow-up constructions}
	\label{sect_new_joint_BL}
	
	This section introduces new joint blow-up constructions that will be our second main ingredient to define relative mixed multiplicities.
	The setup below is used throughout this section.
	
	\begin{setup}
		\label{setup_joint_blowup}
		Let $\sB$ be a standard $\NN^p$-graded algebra over a Noetherian local ring $R = \left[\sB\right]_{(0,\ldots,0)}$ with maximal ideal $\mm \subset R$.
		For each $1 \le i \le p$, let $H_i \subseteq [\sB]_{\ee_{i}}$ be a nonzero $R$-submodule, and $I_i = \left(H_i\right) \subset \sB$ be the ideal generated by $H_i$.
		Let $I = I_1 + \cdots + I_p \subset \sB$ be the ideal given as the sum of the ideals $I_i$.
		Let $X = \multProj(B)$.
	\end{setup}

	We consider the multi-Rees algebra 
	$$
	\sR \;=\; \sR\left(I_1,\ldots,I_p\right) \;:=\; \bigoplus_{(n_1,\ldots,n_p) \in \NN^p} I_1^{n_1} \cdots I_p^{n_p} \; T_1^{n_1}\cdots T_p^{n_p} \;\subset\; \sB[T_1,\ldots,T_p],
	$$
	where $T_1,\ldots,T_p$ are new variables.
	We endow $\sR$ with a standard $(\NN^p \oplus \NN^p)$-grading.
	For any homogeneous element $b \in \sB$, we set 
	$
	\deg_\sR(b) = \deg_\sB(b) \oplus \mathbf{0} \in \NN^p \oplus \NN^p,
	$
	and for any nonzero element $h \in H_i$, we set 
	$
	\deg_\sR(hT_i) = \mathbf{0} \oplus \ee_i \in \NN^p \oplus \NN^p.
	$
	This implies that $\sR$ is generated by elements of total degree one, and thus it is standard $(\NN^p \oplus \NN^p)$-graded as an $R$-algebra.
	
	Let $M$ be a finitely generated $\ZZ^p$-graded $\sB$-module and $\sM := \widetilde{M}$ be the corresponding coherent $\OO_X$-module.
	Then we obtain the following finitely generated $(\ZZ^p \oplus \NN^p)$-graded $\sR$-module 
	$$
	\sR_M \;=\; \sR\left(I_1,\ldots,I_p; M\right) \;:=\; \bigoplus_{(n_1,\ldots,n_p) \in \NN^p} I_1^{n_1} \cdots I_p^{n_p} M \; T_1^{n_1}\cdots T_p^{n_p} \;\subset\; M[T_1,\ldots,T_p].
	$$
	For any $\bv = (v_1,\ldots,v_p) \in \ZZ^p$ and $\bn = (n_1,\ldots,n_p) \in \NN^p$, the corresponding $(\bv,\bn)$-graded part of $\sR_M$ is given by
	$$
	\left[\sR_M\right]_{\bv, \bn} \;=\; \left[I_1^{n_1}\cdots I_p^{n_p} M\right]_{\bv + \bn}.
	$$ 
	We now introduce our new joint blow-up constructions. 
	We take the Rees algebra of the ideal $I\sR$ over the previously defined multi-Rees algebra $\sR$, that is:
	$$
	\fC \;=\; \fC(I_1,\ldots,I_p) \;:=\; \sR\left(I\sR\right) \;=\; \bigoplus_{k=0}^\infty \, I^k\sR\, T^k  \;\subset\; \sR[T].
	$$
	We put a standard $(\NN^p \oplus \NN^p)$-grading on $\fC$ by naturally extending the one of $\sR$.
	We say that $\fC$ has an ``internal grading'' induced by $\sR$.
	For any homogeneous element $w \in \sR$, we set $\deg_\fC(w) = \deg_\sR(w)$, and for any nonzero element $h \in H_i \subset I$, we set 
	$
	\deg_\fC(hT) = \deg_\sR(h).
	$
	Hence $\fC$ is generated by elements of total degree one, and so it is also standard $(\NN^p \oplus \NN^p)$-graded as an $R$-algebra.
	The corresponding associated graded ring 
	$$
	\fG \;=\; \fG(I_1,\ldots,I_p) \;:=\; \gr_{I\sR}(\sR) \;=\; \fC / I \fC \;=\; \bigoplus_{k = 0}^\infty \, \frac{I^k\sR}{I^{k+1} \sR} 
	$$
	also obtains a standard $(\NN^p \oplus \NN^p)$-grading as an $R$-algebra.
	We have the following finitely generated $(\ZZ^p \oplus \NN^p)$-graded modules
	$$
	\fC_M \;=\; \fC(I_1,\ldots,I_p;M) \;:=\;  \sR\left(I\sR;\sR_M\right) \;=\; \bigoplus_{k=0}^\infty \, I^k\sR_M T^k  \;\subset\; \sR_M[T]
	$$
	and 
	$$
	\fG_M \;=\; \fG(I_1,\ldots,I_p;M) \;:=\; \fC_M / I \fC_M \;=\; \bigoplus_{k = 0}^\infty \, \frac{I^k\sR_M}{I^{k+1} \sR_M} 
	$$
	over $\fC$ and $\fG$, respectively.

Let $\mathfrak{X} = \multProj(\fG)$ be the multiprojective scheme determined by $\fG$ (which can naturally be embedded in a product of the form $\PP_R^{m_1} \times_R \cdots \times_R \PP_R^{m_{2p}}$), and $\widetilde{\fG_M}$ be the corresponding coherent $\OO_\mathfrak{X}$-module.

\begin{lemma}
	\label{lem_dim_GM}
	We have the inequality $\dim(\Supp(\widetilde{\fG_M})) \le \dim(\Supp(\sM))$.
\end{lemma}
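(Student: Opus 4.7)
The plan is to adapt the local computation used in the proof of \autoref{lem_dim_KT}. Rather than working with $\fG_M$ globally, I would cover $X$ by affine opens and identify the restriction of $\widetilde{\fG_M}$ to a corresponding open in $\mathfrak{X}$ with the sheaf of an explicitly described graded module whose Krull dimension can be bounded directly.

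Concretely, let $U = \Spec(A) \subseteq X$ be an affine open with $\sM|_U = \widetilde{N}$ for some finitely generated $A$-module $N$. For each $1 \le i \le p$, let $J_i \subseteq A$ denote the ideal induced by $H_i$ under the localization defining $U$, and set $J := J_1 + \cdots + J_p$. Form the multi-Rees algebra $L_0 := A[J_1 T_1, \ldots, J_p T_p] = \bigoplus_{\bn \in \NN^p} J^{\bn} T^{\bn}$ and the multi-Rees module $L := \bigoplus_{\bn \in \NN^p} J^{\bn} N T^{\bn}$ over it, where $T_1, \ldots, T_p$ are new variables. Mirroring the analysis in \autoref{lem_dim_KT}, on the corresponding open subscheme of $\mathfrak{X}$ lying over $U$, the coherent sheaf $\widetilde{\fG_M}$ is identified with the sheaf associated to the associated graded module
\[
\gr_{J L_0}(L) \;=\; \bigoplus_{k \ge 0} (J L_0)^k L \big/ (J L_0)^{k+1} L,
\]
equipped with its induced multi-grading.

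To bound its dimension I would use three standard ingredients in sequence: (i) the inclusion $L \subseteq N[T_1, \ldots, T_p]$ as an $L_0$-submodule gives $\dim(L) \le \dim(N) + p$; (ii) passing to associated graded with respect to a proper ideal of a finitely generated module over a Noetherian ring does not increase Krull dimension, so $\dim(\gr_{J L_0}(L)) \le \dim(L) \le \dim(N) + p$; (iii) subtracting the $p$ projective directions contributed by the variables $T_1, \ldots, T_p$ on the chart yields
\[
\dim\bigl(\Supp(\widetilde{\fG_M}) \cap \text{(chart over }U\text{)}\bigr) \;\le\; \dim(\gr_{J L_0}(L)) - p \;\le\; \dim(N) \;=\; \dim(\Supp(\sM|_U)) \;\le\; \dim(\Supp(\sM)).
\]
Varying $U$ over an affine open cover of $X$ and taking the maximum gives the global inequality.

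The main obstacle is the local identification step. The $(\NN^p \oplus \NN^p)$-grading on $\fG$ is subtle in that the Rees variable $T$ is absorbed into the first $\NN^p$-component via $\deg_\fC(hT) = \deg_\sR(h)$, so verifying that the restriction of $\widetilde{\fG_M}$ really is the sheaf of $\gr_{J L_0}(L)$, and correctly counting how many projective directions remain on the chart, requires careful bookkeeping in the spirit of \autoref{lem_dim_KT}. Once this bookkeeping is done, the rest of the proof is a routine application of the three ingredients above.
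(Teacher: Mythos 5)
There is a genuine gap at exactly the step you flag as the main obstacle: the local identification. Your plan presupposes a projection $\fX=\multProj(\fG)\to X$ so that one can restrict $\widetilde{\fG_M}$ to ``the open subscheme of $\fX$ lying over $U$''; no such morphism exists. Unlike the Kleiman--Thorup scheme of \autoref{lem_dim_KT}, which is a relative construction over $\widehat{X}$ (so that $\pi^{-1}(U)$ and the module $L/(J_1\cdots J_q,t)L$ make sense), the algebra $\fG=\gr_{I\sR}(\sR)$ is not a $B$-algebra compatibly with its grading: the natural map $B\to\fG$ kills $I$, and the degree-$(\ee_i,\mathbf{0})$ component of $\fG$ is $[B]_{\ee_i}/H_i\,\oplus\,H_i$, i.e.\ residues of $[B]_{\ee_i}$ together with the \emph{initial forms} of elements of $H_i$. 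Charts of $\fX$ obtained by inverting initial forms do not lie over any affine chart of $X$, and your recipe never sees them. Concretely, take $p=1$, $R=\kk$ a field, $B=\kk[x_0,\dots,x_m]$, $H_1=[B]_1$, $M=B$ (allowed in \autoref{setup_joint_blowup}). One computes $[\fG]_{(v,n)}\cong[B]_{v+n}$, so $\fX\cong\PP^m_\kk$ and $\widetilde{\fG_M}\cong\OO_{\fX}$ has support of dimension $m$; yet on every chart $U=D_+(x_i)\subseteq X$ the induced ideal $J$ is the unit ideal, so your module $\gr_{JL_0}(L)=0$. If your identification held, this would force $\widetilde{\fG_M}=0$, which is false; so the chain of inequalities bounds the dimension of the wrong sheaf. (Relatedly, the count ``subtract the $p$ directions $T_1,\dots,T_p$'' is off: $\fX$ is multiprojective in $2p$ directions, with the Rees level $k$ folded into the first $p$ gradings rather than appearing as separate $T$-directions.)

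For contrast, the paper's proof never localizes over $X$: it computes the $(\fv,\fn)$-graded pieces of $\fG_M$ directly, notes that only finitely many levels $k$ contribute, deduces $\widetilde{\fG_M}\cong\widetilde{\fG_{\overline{M}}}$ with $\overline{M}=M/\HH_{B_{++}}^0(M)$, and then combines the purely algebraic bound $\dim(\fG_{\overline{M}})\le\dim(\overline{M})+p$ with the general fact that $\dim(\Supp(\widetilde{N}))\le\dim(N)-2p$ for a finitely generated module over a standard $(\NN^p\oplus\NN^p)$-graded algebra: forming the associated graded along $I\sR$ costs nothing, while taking $\multProj$ in $2p$ directions gains the needed $p$. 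Note also that the reduction to $\overline{M}$ is not cosmetic: $\dim(\Supp(\sM))=\dim(\overline{M})-p$ may be strictly smaller than $\dim(M)-p$, and your write-up does not address this; a chart-style argument would have to work with the actual charts of $\fX$ (including the initial-form ones), at which point one essentially reproves the paper's algebraic estimate.
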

\begin{proof}
	Let $\overline{M} = M / \HH_{B_{++}}^0(M)$.
	Notice that the $(\fv,\fn)$-graded part of $\fG_M$ is given by 
			\begin{align*}
		\big[\fG_{M}\big]_{\fv,\bn}  &\;=\; \bigoplus_{k=0}^\infty \left[\frac{I^k\sR_{M}}{I^{k+1} \sR_{M}} \right]_{\fv,\bn} \\
		&\;=\; \bigoplus_{k=0}^\infty \left[\frac{I^k I_1^{n_1}\cdots I_p^{n_p}M}{I^{k+1} I_1^{n_1}\cdots I_p^{n_p}M} \right]_{\fv+\bn} 
		\;=\; \bigoplus_{k=0}^{|\fv|-{\rm indeg}(M)} \left[\frac{I^k I_1^{n_1}\cdots I_p^{n_p}M}{I^{k+1} I_1^{n_1}\cdots I_p^{n_p}M} \right]_{\fv+\bn} 
	\end{align*}
	where ${\rm indeg}(M) := \inf\big\{ v \in \ZZ \mid [M]_\fv \neq 0 \text{ for some $\fv \in \ZZ^p$ with $|\fv| = v$}\big\}$.
	Since $[M]_\fv \cong \left[\overline{M}\right]_\fv$ for all $\fv \gg \mathbf{0}$, it follows that $\big[\fG_{M}\big]_{\fv,\bn} \cong \big[\fG_{\overline{M}}\big]_{\fv,\bn}$ for all $\fv \gg \mathbf{0}$ and $\fn \gg \mathbf{0}$.
	Therefore $\widetilde{\fG_M} \cong \widetilde{\fG_{\overline{M}}}$.
	We have  $\dim(\fG_{\overline{M}}) \le \dim(\overline{M}) + p$ (see, e.g, \cite[Chapter 5]{huneke2006integral}).
	Hence we obtain
	$$
	\dim(\Supp(\widetilde{\fG_M})) \;\le\; \dim(\fG_{\overline{M}})-2p \;\le\; \dim(\overline{M}) - p \;=\; \dim(\Supp(\sM)),
	$$
	as required.
\end{proof}

As a consequence of the above dimension computation, we can introduce the following invariants. 
Similar invariants were introduced in the $\NN$-graded case by Ulrich and Validashti \cite{ULRICH_VALIDASHTI_INT_DEP_MOD}.  

\begin{definition}
	\label{def_j_mult_new_joint}
	For any $\alpha, \beta \in \NN^p$ with $|\alpha| + |\beta| \ge \dim(\Supp(\sM))$, we consider the following mixed $j$-multiplicities
	$$
	j_{\alpha,\beta}\big(H_1,\ldots,H_p; M\big) \;:=\;  j_{\alpha,\beta}(\fG_M) \quad \text{ and } \quad j_{\alpha,\beta}^{\#}\big(H_1,\ldots,H_p; M\big) \;:=\;  j_{\alpha,\beta}(B_+\fG_M);
	$$
	see \autoref{sect_mixed_j_mult}.
	
	\noindent
	We also set $j_{\beta}\big(H_1,\ldots,H_p; M\big) := j_{\mathbf{0},\beta}\big(H_1,\ldots,H_p; M\big)$ and $j_{\beta}^{\#}\big(H_1,\ldots,H_p; M\big) := j_{\mathbf{0},\beta}^{\#}\big(H_1,\ldots,H_p; M\big)$ for any $\beta \in \NN^p$ with $|\beta| \ge \dim(\Supp(\sM))$.
\end{definition}

The next remark tells us that under the conditions needed to define relative mixed multiplicities, we have that the mixed $j$-multiplicities of $B_+\fG_M$ are just mixed multiplicities.
 
\begin{remark}
	\label{rem_finite_length}
	Assume $\length_R\left([B]_{\ee_i}/H_i\right) < \infty$ for all $1 \le i \le p$. 
	Then $\overline{\fG} := \fG/(0:_{\fG}B_+\fG)$ is a standard $(\NN^p \oplus \NN^p)$-graded algebra over the Artinian local ring $\Big[\overline{\fG}\Big]_{\mathbf{0}, \mathbf{0}} = R/\mathfrak{a}$ with $\mathfrak{a} = \bigcap_{i=1}^p \Ann_R\left([B]_{\ee_i}/H_i\right)$.
	Notice that $B_+\fG_M$ is always a finitely generated $(\ZZ^p \oplus \NN^p)$-graded module over $\overline{\fG}$.
	Therefore, we obtain that $j_{\alpha,\beta}^{\#}\big(H_1,\ldots,H_p; M\big) = e(\alpha, \beta; B_+\fG_M)$.
\end{remark}

	The following theorem shows the existence of a polynomial that will be essential to our approach to defining relative mixed multiplicities.
	This existence result is a byproduct of considering the above joint blow-up constructions. 
	The leading coefficients of this polynomial will coincide with the mixed multiplicities of $B_+\fG_\sH$ for a certain ideal $\sH \subset B$.

	\begin{theorem}
		\label{thm_new_joint_Bl}
		Assume \autoref{setup_joint_blowup} and that $\length_R\left([B]_{\ee_i}/H_i\right) < \infty$ for all $1 \le i \le p$.
		Set $r = \dim(X)$. 
		Fix a tuple $\ft = (t_1,\ldots,t_p) \in \NN^p$ of nonnegative integers, and let $\sH_\ft \subset \sB$ be the ideal generated by the elements in the graded part $[\sB]_\ft$.
		Then the following function
		$$
		\lambda_\ft^{H_1,\ldots,H_p}(v_1,\ldots,v_p,n_1,\ldots,n_p) = {\rm length}_R\left(\frac{{H_1}^{n_1}\cdots {H_p}^{n_p}\,[\sB]_{(v_1,\ldots,v_p)}}{H_1^{v_1+n_1-t_1}\cdots H_p^{v_p+n_p-t_p} \, [\sB]_{(t_1,\ldots,t_p)}}\right)
		$$
		coincides with a polynomial $P_\ft^{H_1,\ldots,H_p}(v_1,\ldots,v_p,n_1,\ldots,n_p)$ for all $v_i \gg 0$ and $n_i \gg 0$.
		Moreover, $P_\ft^{H_1,\ldots,H_p}$ has total degree at most $r$ and we can write
		\[
		P_\ft^{H_1,\ldots,H_p}(v_1,\ldots,v_p,n_1,\ldots,n_p) \;=\;  \sum_{\substack{\alpha\in \NN^p, \beta \in \NN^p\\|\alpha|+|\beta|=r}} e\left(\alpha, \beta\right) \, \frac{v_1^{\alpha_1}\cdots v_p^{\alpha_p}n_1^{\beta_1}\cdots n_p^{\beta_p}}{\alpha_1!\cdots\alpha_p!\beta_1!\cdots\beta_p!} \;+\; \text{\rm(lower degree terms)}
		\]
		where $e(\alpha,\beta):=j_{\alpha,\beta}^{\#}\big(H_1,\ldots,H_p; M\big)=e\left(\alpha, \beta; B_+\fG_{\sH_\ft}\right)$ is the mixed multiplicity of $B_+\fG_{\sH_\ft}$ of type $(\alpha,\beta)$.
	\end{theorem}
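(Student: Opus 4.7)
The plan is to combine \autoref{rem_finite_length} with the standard theory of mixed multiplicities for $(\NN^p \oplus \NN^p)$-graded modules over a standard graded algebra with Artinian base (cf.\ \cite{MIXED_MULT, HERMANN_MULTIGRAD}) to obtain polynomial behavior and leading coefficients for the Hilbert function of $B_+\fG_{\sH_\ft}$, and then to identify this Hilbert function with $\lambda_\ft^{H_1,\ldots,H_p}$ by an explicit length computation. By \autoref{rem_finite_length}, $B_+\fG_{\sH_\ft}$ is finitely generated over the standard graded algebra $\overline{\fG}$ defined over the Artinian local ring $R/\mathfrak{a}$, so its Hilbert function will agree for $\bv,\bn \gg \mathbf{0}$ with a polynomial $P(\bv,\bn)$ whose normalized top-degree coefficients are the mixed multiplicities $e(\alpha,\beta;B_+\fG_{\sH_\ft})$; \autoref{lem_dim_GM} then bounds $\deg P \le \dim(\Supp(\widetilde{\sH_\ft})) = \dim(X) = r$.

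The key step will be to prove
\[
\length_R\bigl([B_+\fG_{\sH_\ft}]_{\bv,\bn}\bigr) \;=\; \lambda_\ft^{H_1,\ldots,H_p}(\bv,\bn)
\]
for every $\bv \ge \ft$ and $\bn \ge \mathbf{0}$. I would set $\widetilde{G}_k := I^k H_1^{n_1}\cdots H_p^{n_p}\sH_\ft \subseteq B$, $L_k := [\widetilde{G}_k]_{\bv+\bn}$, $A_k := [B_+\widetilde{G}_k]_{\bv+\bn}$, and $K := |\bv|-|\ft|$. A direct multidegree count then yields $L_0 = H_1^{n_1}\cdots H_p^{n_p}[B]_{\bv}$, $L_K = H_1^{v_1+n_1-t_1}\cdots H_p^{v_p+n_p-t_p}[B]_{\ft}$ (the unique admissible exponent $\alpha = \bv-\ft$ in $[I^K]_{\bv-\ft}$), and $L_{K+1} = 0$. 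The level-$k$ piece of $[\fG_{\sH_\ft}]_{\bv,\bn}$ is $L_k/L_{k+1}$, while that of $[B_+\fG_{\sH_\ft}]_{\bv,\bn}$ is $A_k/L_{k+1}$.

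The hard part will be to show $A_k = L_k$ for $k < K$ and $A_K = 0$. For $k < K$, the plan is to observe that every generator of $\widetilde{G}_k$ that is visible in degree $\bv+\bn$ lies in strictly smaller \emph{total} degree, so standard $\NN^p$-gradedness of $B$ allows factoring out some $[B]_{\ee_j}$ (with $j$ chosen so that the residual degree stays nonnegative coordinatewise), recovering all of $L_k$ from $B_+\cdot\widetilde{G}_k$; for $k = K$ the only generators reaching degree $\bv+\bn$ sit there exactly, and a multidegree count rules out any nonzero contribution to $[\widetilde{G}_K]_{\bv+\bn-u}$ for $|u|\ge 1$. This will give $[B_+\fG_{\sH_\ft}]_{\bv,\bn} = \bigoplus_{k=0}^{K-1} L_k/L_{k+1}$, whose total length telescopes to $\length_R(L_0/L_K) = \lambda_\ft^{H_1,\ldots,H_p}(\bv,\bn)$. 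Conceptually, the top layer $L_K$ of the $I$-adic filtration --- which is precisely the denominator appearing in $\lambda_\ft^{H_1,\ldots,H_p}$ --- is exactly what gets removed when passing from $\fG_{\sH_\ft}$ to $B_+\fG_{\sH_\ft}$, and pinning down this combinatorial fact is the main obstacle. Once this identity is in hand, the theorem follows by equating $\lambda_\ft^{H_1,\ldots,H_p}$ with the polynomial $P$ from the first paragraph, so that its top-degree normalized coefficients become $e(\alpha,\beta;B_+\fG_{\sH_\ft}) = j_{\alpha,\beta}^{\#}(H_1,\ldots,H_p;\sH_\ft)$, as claimed.
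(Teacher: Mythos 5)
Your proposal is correct and follows essentially the same route as the paper: compute $\big[B_+\fG_{\sH_\ft}\big]_{\fv,\fn}$ by the $I$-adic filtration, show the top layer drops out and the lower layers telescope to $\lambda_\ft^{H_1,\ldots,H_p}(\fv,\fn)$ via the pigeonhole/degree count, then invoke multigraded Hilbert polynomials over the Artinian base (\autoref{rem_finite_length}) together with the dimension bound of \autoref{lem_dim_GM}. The step you flag as the hard part (namely $A_k=L_k$ for $k<K$ and $A_K=0$, using standard gradedness to factor out some $[B]_{\ee_j}$) is exactly the computation carried out in the paper's proof, so only the minor point that one needs $\le r$ rather than equality for $\dim(\Supp(\widetilde{\sH_\ft}))$ would need polishing.
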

	\begin{proof}	
		We may assume that $\fv > \ft$.
		Let $c = |\fv| - |\ft|$.
		We study the finitely generated $(\NN^p \oplus \NN^p)$-graded $\fG$-module $B_+\fG_{\sH_\ft}$.	
		From the above conventions, the $(\fv,\bn)$-graded part is given by 
		\begin{align*}
			\big[B_+\fG_{\sH_\ft}\big]_{\fv,\bn}  &\;=\; \bigoplus_{k=0}^\infty \left[B_+ \cdot \frac{I^k\sR_{\sH_\ft}}{I^{k+1} \sR_{\sH_\ft}} \right]_{\fv,\bn} \\
			&\;=\; \bigoplus_{k=0}^\infty \left[B_+ \cdot \frac{I^k I_1^{n_1}\cdots I_p^{n_p}\sH_\ft}{I^{k+1} I_1^{n_1}\cdots I_p^{n_p}\sH_\ft} \right]_{\fv+\bn} \\
			&\;=\; \bigoplus_{k=0}^{c-1} \left[B_+ \cdot \frac{I^k I_1^{n_1}\cdots I_p^{n_p}\sH_\ft}{I^{k+1} I_1^{n_1}\cdots I_p^{n_p}\sH_\ft} \right]_{\fv+\bn} \\
			&\;=\; \bigoplus_{k=0}^{c-1} \left[\frac{I^k I_1^{n_1}\cdots I_p^{n_p}\sH_\ft}{I^{k+1} I_1^{n_1}\cdots I_p^{n_p}\sH_\ft} \right]_{\fv+\bn}.
		\end{align*}
		Thus, we obtain the equality 
		$$
		{\rm length}_R\left(\big[B_+\fG_{\sH_\ft}\big]_{\fv,\bn}\right) \;=\; {\rm length}_R\left(
		\frac{\left[I_1^{n_1}\cdots I_p^{n_p}\sH_\ft\right]_{\fv + \bn}}{\left[I^cI_1^{n_1}\cdots I_p^{n_p}\sH_\ft\right]_{\fv + \bn}}\right).
		$$
		It is clear that 
		$$
		\left[I_1^{n_1}\cdots I_p^{n_p}\sH_\ft\right]_{\fv + \bn} \;=\; H_1^{n_1}\cdots H_p^{n_p} [\sB]_\fv.
		$$
		On the other hand, since 
		$$
		I^cI_1^{n_1}\cdots I_p^{n_p} \;=\; \sum_{j_1+\cdots+j_p=c} I_1^{n_1+j_1}\cdots I_p^{n_p+j_p}
		$$
		and $\big[I_1^{n_1+j_1}\cdots I_p^{n_p+j_1}\sH_\ft\big]_{\fv + \bn} = 0$ when any $j_i > v_i -t_i$, by the pigeonhole principle it follows that 
		$$
		\left[I^cI_1^{n_1}\cdots I_p^{n_p}\sH_\ft\right]_{\fv + \bn} \;=\; H_1^{v_1+n_1-t_1}\cdots H_p^{v_p+n_p-t_p} [\sB]_\ft.
		$$
		Then the result follows from the existence of (multigraded) Hilbert polynomials (see, e.g., \cite[Lemma 4.3]{KLEIMAN_THORUP_GEOM}, \cite[Theorem 4.1]{HERMANN_MULTIGRAD}, \cite[Theorem 3.4]{MIXED_MULT}).
		Indeed, there is a polynomial $P_\ft^{H_1,\ldots,H_q}(\fv,\fn) := P_{B_+\fG_{\sH_\ft}}(\fv, \fn)$ of total degree equal to $\dim(\Supp(\widetilde{B_+\fG_{\sH_\ft}}))$	such that 
		$$
		P_\ft^{H_1,\ldots,H_p}(\fv,\fn) \;=\; P_{B_+\fG_{\sH_\ft}}(\fv, \fn) \;=\; {\rm length}_R\big(\big[B_+\fG_{\sH_\ft}\big]_{\fv,\bn}\big) \;=\; \lambda_\ft^{H_1,\ldots,H_p}(\fv,\fn)
		$$
		for all $\fv \gg \mathbf{0}$ and $\fn \gg \mathbf{0}$.
		Finally, we have the upper bound $\dim(\Supp(\widetilde{B_+\fG_{\sH_\ft}})) \le r = \dim(X)$ (see \autoref{lem_dim_GM}).
	\end{proof}
	
	We now show an additivity result for the mixed $j$-multiplicities $j_{\alpha,\beta}\big(H_1,\ldots,H_p; M\big)$. 
	Our proof is inspired by the arguments used in the proofs of \cite[Theorem 1.2.6]{FLENNER_O_CARROLL_VOGEL} and \cite[Proposition 6.1.7]{FLENNER_O_CARROLL_VOGEL}.
	We also benefited from the exposition in \cite{ULRICH_VALIDASHTI_INT_DEP_MOD}.

	\begin{proposition}
		\label{prop_additive}
		Let $0 \rightarrow M' \rightarrow M \rightarrow M'' \rightarrow 0$ be a short exact sequence of finitely generated $\ZZ^p$-graded $B$-modules.
		Set $r = \dim(\Supp(\sM))$.
		Then, for all $\alpha, \beta \in \NN^p$  with $|\alpha|+|\beta| \ge r$, we have the equality 
		$$
		j_{\alpha,\beta}\big(H_1,\ldots,H_p; M\big) \;=\; 	j_{\alpha,\beta}\big(H_1,\ldots,H_p; M'\big) + j_{\alpha,\beta}\big(H_1,\ldots,H_p; M''\big).
		$$
	\end{proposition}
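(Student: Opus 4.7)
The plan is to follow the standard strategy for the additivity of $j$-multiplicities under short exact sequences, as in \cite[Theorem 1.2.6 and Proposition 6.1.7]{FLENNER_O_CARROLL_VOGEL} and \cite{ULRICH_VALIDASHTI_INT_DEP_MOD}, adapted to our multigraded multi-Rees algebra framework. First, from the short exact sequence $0 \to M' \to M \to M'' \to 0$ I would construct the short exact sequence of finitely generated $(\ZZ^p \oplus \NN^p)$-graded $\sR$-modules
$$
0 \to \widetilde{\sR_{M'}} \to \sR_M \to \sR_{M''} \to 0,
$$
where $\widetilde{\sR_{M'}} := \ker(\sR_M \twoheadrightarrow \sR_{M''})$ has $\bn$-graded piece $\big(I_1^{n_1}\cdots I_p^{n_p} M\big) \cap M'$. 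Since the surjection $\sR_M \twoheadrightarrow \sR_{M''}$ is compatible with the $I\sR$-adic filtrations on both sides, applying the snake lemma to consecutive filtration layers produces a short exact sequence of finitely generated $(\ZZ^p \oplus \NN^p)$-graded $\fG$-modules
$$
0 \to \gr_F\big(\widetilde{\sR_{M'}}\big) \to \fG_M \to \fG_{M''} \to 0,
$$
in which $F$ denotes the restricted filtration $F_k := (I\sR)^k \sR_M \cap \widetilde{\sR_{M'}}$. Invoking \autoref{lem_j_mult_add}, whose argument extends without change to the bigraded indices $(\alpha,\beta)$, this yields
$$
j_{\alpha,\beta}\big(\fG_M\big) \;=\; j_{\alpha,\beta}\big(\gr_F(\widetilde{\sR_{M'}})\big) + j_{\alpha,\beta}\big(\fG_{M''}\big).
$$

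Next I would compare $\gr_F\big(\widetilde{\sR_{M'}}\big)$ with $\fG_{M'} = \gr_{I\sR}(\sR_{M'})$ via the Artin--Rees lemma, applied with respect to the ideal $I\sR$ of the Noetherian ring $\sR$ and to the inclusions $\sR_{M'} \subseteq \sR_M$ and $\widetilde{\sR_{M'}} \subseteq \sR_M$. The resulting bounded shifts between the $I\sR$-adic filtrations and their restrictions, together with the inclusion $\sR_{M'} \hookrightarrow \widetilde{\sR_{M'}}$, produce a homomorphism $\fG_{M'} \to \gr_F\big(\widetilde{\sR_{M'}}\big)$ whose kernel and cokernel have support of dimension strictly less than $r = \dim(\Supp(\sM))$. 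By \autoref{lem_dim_GM} and the degree bound on the multigraded Hilbert polynomial, the mixed $j$-multiplicities $j_{\alpha,\beta}$ with $|\alpha|+|\beta| \ge r$ vanish on any finitely generated $\fG$-module whose support has dimension strictly less than $r$. Applying \autoref{lem_j_mult_add} once more therefore yields $j_{\alpha,\beta}\big(\gr_F(\widetilde{\sR_{M'}})\big) = j_{\alpha,\beta}\big(\fG_{M'}\big)$, and combining with the previous display delivers the claimed additivity.

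I expect the principal obstacle to be this comparison step: making the multigraded Artin--Rees argument precise and rigorously bounding the dimension of the supports of the kernel and cokernel of the comparison map $\fG_{M'} \to \gr_F\big(\widetilde{\sR_{M'}}\big)$. Once this has been settled, the rest of the argument is formal via \autoref{lem_j_mult_add} and \autoref{lem_dim_GM}.
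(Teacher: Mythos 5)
The first half of your plan is sound and parallels the paper: the kernel $\widetilde{\sR_{M'}}$ with graded pieces $\big(I_1^{n_1}\cdots I_p^{n_p}M\big)\cap M'$ and the snake-lemma sequence $0 \to \gr_F\big(\widetilde{\sR_{M'}}\big) \to \fG_M \to \fG_{M''} \to 0$ are exactly the right starting point. The gap is at the step you yourself flag as the principal obstacle, and it is not just a matter of making Artin--Rees precise: the claim that the natural map $\fG_{M'} \to \gr_F\big(\widetilde{\sR_{M'}}\big)$ has kernel and cokernel of support dimension strictly less than $r$ is false in general. Bounded comparability of filtrations (say $F_{k+c} \subseteq (I\sR)^k\sR_{M'} \subseteq F_k$) controls Hilbert functions, not the natural map of associated graded modules. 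Already in the most classical situation this fails: take $R=\kk[[x]]$, $I=(x)$, $M=R$, $M'=(x)$; then $F_k=(x^k)\cap(x)$ and the natural map $\gr_I(M') \to \gr_F(M')$ is zero in every degree, while both modules have dimension $1=\dim M'$, so the kernel and cokernel have full dimension. Moreover, since the $j$-multiplicities here are defined through $\HL^0$ (the graded pieces need not have finite length), you also cannot fall back on the classical numerical ``sandwich'' argument for leading coefficients of length functions.

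What one actually needs is not a dimension drop but the equality $j_{\alpha,\beta}(U)=j_{\alpha,\beta}(V)$ for the kernel $U$ and cokernel $V$ of the comparison map, so that they cancel in $j_{\alpha,\beta}(\fG_{M'})+j_{\alpha,\beta}(V)=j_{\alpha,\beta}(U)+j_{\alpha,\beta}\big(\gr_F(\widetilde{\sR_{M'}})\big)$. The paper achieves this with the extended Rees algebra device of Flenner--O'Carroll--Vogel and Ulrich--Validashti: it works over $\fC^+=\Rees^+(I\Rees)\otimes_{R[T^{-1}]}R'$ with $R'=R[T^{-1}]_{(\mm,T^{-1})}$, sets $N=\Ker\big(\fC_M^+\to\fC_{M''}^+\big)$ and $L=N/\fC_{M'}^+$; one snake lemma gives $0\to N/T^{-1}N\to\fG_M\to\fG_{M''}\to 0$ (with $N/T^{-1}N$ playing the role of your $\gr_F\big(\widetilde{\sR_{M'}}\big)$), and a second one identifies $U$ and $V$ with the kernel and cokernel of multiplication by $T^{-1}$ on $L$. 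The multi-ideal Artin--Rees lemma gives $(T^{-1}I_1\cdots I_p)^c L=0$, hence $\dim\big(\Supp(\widetilde{L})\big)\le r$ (only $\le r$ is needed), and additivity (\autoref{lem_j_mult_add}) applied to $0\to U\to L\xrightarrow{T^{-1}}L\to V\to 0$ yields $j_{\alpha,\beta}(U)=j_{\alpha,\beta}(V)$, from which the proposition follows. So your scaffolding is correct, but the comparison step must be replaced by this (or an equivalent) cancellation mechanism; as stated, it would fail.
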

	\begin{proof}
		We consider the extended Rees algebra 
		$$
		\Rees^+(I\Rees) \;:=\; \Rees\left[I\Rees T, T^{-1}\right] \;=\; \bigoplus_{k \in \ZZ} I^k\sR T^k \;\subset\; \Rees[T,T^{-1}].
		$$		
		By following our grading conventions, we obtain that $\Rees^+(I\Rees)$ is a standard $(\NN^p \oplus \NN^p)$-graded algebra over $R[T^{-1}]$.
		We take the localization $\fC^+ :=  \Rees^+(I\Rees) \otimes_{R[T^{-1}]} R'$ with $R' := R[T^{-1}]_{(\mm, T^{-1})R[T^{-1}]}$.
		Therefore, we have that $\fC^+$ is a standard $(\NN^p \oplus \NN^p)$-graded algebra over the local ring $R'$, and so we are free to compute mixed $j$-multiplicities over $\fC^+$ (see \autoref{sect_mixed_j_mult}).
		We also consider the extended Rees-module 
		$$
		 \Rees^+(I\Rees; \Rees_M) \;:=\; \bigoplus_{k \in \ZZ} I^k\sR_M T^k \;\subset\; \Rees_M[T, T^{-1}]
		$$
		and take the localization $\fC_M^+ := \Rees^+(I\Rees; \Rees_M) \otimes_{R[T^{-1}]} R'$.
		
		Let $\mathscr{Y} = \multProj(\fC^+)$ be the multiprojective scheme associated to $\fC^+$.
		Let $N := \Ker\left(\fC_M^+ \rightarrow \fC_{M''}^+\right)$ and $L := N / \fC_{M'}^+$.
		We have the following commutative diagram with exact rows and columns of finitely generated $(\ZZ^p \oplus \NN^p)$-graded $\fC^+$-modules
		\begin{equation*}
			\begin{tikzpicture}[baseline=(current  bounding  box.center)]
				\matrix (m) [matrix of math nodes,row sep=3em,column sep=6.5em,minimum width=2em, text height=1.5ex, text depth=0.25ex]
				{
					& 0 & 0 & 0 &	\\			
					0 & N & \fC_{M}^+ & \fC_{M''}^+ & 0\\
					0 & N & \fC_{M}^+ & \fC_{M''}^+ & 0.\\
				};
				\path[-stealth]
				(m-2-1) edge (m-2-2)
				(m-2-2) edge (m-2-3)
				(m-2-3) edge (m-2-4)
				(m-2-4) edge (m-2-5)
				(m-3-1) edge (m-3-2)
				(m-3-2) edge (m-3-3)
				(m-3-3) edge (m-3-4)
				(m-3-4) edge (m-3-5)
				(m-1-2) edge (m-2-2)
				(m-1-3) edge (m-2-3)
				(m-1-4) edge (m-2-4)
				(m-2-2) edge node [left] {$T^{-1}$} (m-3-2)
				(m-2-3) edge node [left] {$T^{-1}$} (m-3-3)
				(m-2-4) edge node [left] {$T^{-1}$} (m-3-4)
				;		
			\end{tikzpicture}	
		\end{equation*}
		The snake lemma yields the short exact sequence 
		\begin{equation}
			\label{eq_ex_seq_N_gr_gr}
			0 \longrightarrow N/T^{-1}N \longrightarrow \fG_M \longrightarrow \fG_{M''} \longrightarrow 0.
		\end{equation}
		Let $U$ and $V$ be the kernel and cokernel of the multiplication map $L \xrightarrow{T^{-1}} L$, respectively.
		Hence we consider the exact sequence 
		\begin{equation}
			\label{eq_ex_seq_mult_L}
			0 \longrightarrow U \longrightarrow L \xrightarrow{\quad T^{-1} \quad} L \longrightarrow V \longrightarrow 0.
		\end{equation}
		We have the following commutative diagram with exact rows and columns 
		\begin{equation*}
			\begin{tikzpicture}[baseline=(current  bounding  box.center)]
				\matrix (m) [matrix of math nodes,row sep=3em,column sep=6.5em,minimum width=2em, text height=1.5ex, text depth=0.25ex]
				{
					&  &  & 0 &	\\			
					& 0 & 0 & U &	\\			
					0 & \fC_{M'}^+ &  N & L & 0\\
					0 & \fC_{M'}^+ &  N & L & 0\\
					&  &  & V &	\\
					&  &  & 0. &	\\
				};
				\path[-stealth]
				(m-3-1) edge (m-3-2)
				(m-3-2) edge (m-3-3)
				(m-3-3) edge (m-3-4)
				(m-3-4) edge (m-3-5)
				(m-4-1) edge (m-4-2)
				(m-4-2) edge (m-4-3)
				(m-4-3) edge (m-4-4)
				(m-4-4) edge (m-4-5)
				(m-2-2) edge (m-3-2)
				(m-2-3) edge (m-3-3)
				(m-2-4) edge (m-3-4)
				(m-3-2) edge node [left] {$T^{-1}$} (m-4-2)
				(m-3-3) edge node [left] {$T^{-1}$} (m-4-3)
				(m-3-4) edge node [left] {$T^{-1}$} (m-4-4)
				(m-1-4) edge (m-2-4)
				(m-4-4) edge (m-5-4)
				(m-5-4) edge (m-6-4)
				;		
			\end{tikzpicture}	
		\end{equation*}
		Again, by utilizing the snake lemma, we get the exact sequence
		\begin{equation}
			\label{eq_ex_seq_gr_A_N}
			0 \longrightarrow U \longrightarrow \fG_{M'} \longrightarrow N/T^{-1}N \longrightarrow V \longrightarrow 0.
		\end{equation}
		Since we have the following explicit description
		$$
		\frac{\Ker\big(\Rees^+(I\Rees; \Rees_M) \rightarrow \Rees^+(I\Rees; \Rees_{M''})\big)}{\Rees^+(I\Rees; \Rees_{M'})} \;\cong\; \bigoplus_{k \in \ZZ,\, n_1,\ldots,n_p \ge 0} \frac{I^kI_1^{n_1}\cdots I_p^{n_p}M \cap M'}{I^kI_1^{n_1}\cdots I_p^{n_p}M'},
		$$
		the multi-ideal version of the Artin-Rees lemma \cite[Theorem 17.1.6]{huneke2006integral} implies that $(T^{-1}I_1\cdots I_p)^c \cdot L = 0$ for some $c \ge 0$.
		Consider the coherent $\OO_\mathscr{Y}$-modules $\widetilde{L}$  and $\mathchanc{Q} = \widetilde{\fC_M^+/T^{-1}I_1\cdots I_p \fC_M^+}$.
		It then follows that 
		$$
		\dim(\Supp(\widetilde{L})) \;\le\; \dim(\Supp(\mathchanc{Q})) \;\le\; r = \dim(\Supp(\sM));
		$$
		we can proceed as in \autoref{lem_dim_GM}.

		Finally, by utilizing the additivity result of \autoref{lem_j_mult_add} and combining \autoref{eq_ex_seq_N_gr_gr}, \autoref{eq_ex_seq_mult_L} and  \autoref{eq_ex_seq_gr_A_N}, we obtain the required equality $j_{\alpha,\beta}(\fG_M) =	j_{\alpha,\beta}(\fG_{M'}) + j_{\alpha,\beta}(\fG_{M''})$.
	\end{proof}

\begin{corollary}
	\label{cor_torsion_my_j_mult}
		The following statements holds: 
	\begin{enumerate}[\rm (i)]
			\item $j_{\alpha,\beta}\big(H_1,\ldots,H_p; M\big) = j_{\alpha,\beta}\big(H_1,\ldots,H_p; \overline{M}\big)$ for all $\alpha, \beta \in \NN^p$ with $|\alpha|+|\beta| \ge \dim(\Supp(\sM))$, where $\overline{M} = M/\HH_{B_{++}}^0(M)$.
			\item $j_{\alpha,\beta}\big(H_1,\ldots,H_p; \sH_\ft\big) = j_{\alpha,\beta}\big(H_1,\ldots,H_p; B\big)$ for all $\ft, \alpha,\beta \in \NN^p$ with $|\alpha|+|\beta| \ge \dim(X)$.
	\end{enumerate}
\end{corollary}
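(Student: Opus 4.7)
My approach is to derive both statements from \autoref{prop_additive} by isolating the following vanishing claim: if $K$ is a finitely generated $\ZZ^p$-graded $B$-module with $\left[K\right]_\fv = 0$ for all $\fv \gg \mathbf{0}$, then $j_{\alpha,\beta}\big(H_1,\ldots,H_p; K\big) = 0$ for every $\alpha, \beta \in \NN^p$. Once this is in hand, the two parts reduce to applying additivity to natural short exact sequences.

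To establish the auxiliary claim, I would reuse the description of the graded pieces of $\fG_K$ already computed in the proof of \autoref{lem_dim_GM}: each $\left[\fG_K\right]_{\fv,\bn}$ is a subquotient of $\left[K\right]_{\fv+\bn}$. Hence $\left[\fG_K\right]_{\fv,\bn}$ vanishes for $\fv \gg \mathbf{0}$ and every $\bn \in \NN^p$, and the same then holds for the submodule $\HL^0\big(\fG_K\big) \subseteq \fG_K$. Because $\HL^0(\fG_K)$ is finitely generated over $\fG/\mm^k\fG$, a standard $(\NN^p \oplus \NN^p)$-graded algebra over the Artinian ring $R/\mm^k$ for $k \gg 0$ (by the discussion opening \autoref{sect_mixed_j_mult}), its multigraded Hilbert function agrees with a polynomial for $(\fv,\bn) \gg (\mathbf{0},\mathbf{0})$; that polynomial must then be identically zero, forcing every one of its mixed multiplicities, and thus $j_{\alpha,\beta}\big(H_1,\ldots,H_p; K\big)$, to vanish.

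Part (i) then follows by applying \autoref{prop_additive} to $0 \to \HH_{B_{++}}^0(M) \to M \to \overline{M} \to 0$: the sheaf $\widetilde{\HH_{B_{++}}^0(M)}$ is zero on $X$, so $r = \dim(\Supp(\sM))$ is the same for $M$ and $\overline{M}$, matching the hypothesis on $|\alpha|+|\beta|$; moreover $\HH_{B_{++}}^0(M)$ is annihilated by some power of $B_{++}$, so its graded components vanish for $\fv \gg \mathbf{0}$ and the claim applies. Part (ii) is entirely analogous via $0 \to \sH_\ft \to B \to B/\sH_\ft \to 0$, using $\dim(\Supp(\widetilde{B})) = \dim(X)$ and the fact that $\left[B/\sH_\ft\right]_\fv = 0$ for all $\fv \ge \ft$. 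The only delicate point is invoking multigraded Hilbert-polynomial theory for the submodule $\HL^0(\fG_K)$ rather than for $\fG_K$ itself, which is precisely what the structural observation at the start of \autoref{sect_mixed_j_mult} supplies; beyond this bookkeeping, I do not anticipate any substantive obstacle.
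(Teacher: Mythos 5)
Your proposal is correct and follows essentially the same route as the paper: the paper's proof is exactly ``additivity (\autoref{prop_additive}) applied to the sequences $0 \to \HH_{B_{++}}^0(M) \to M \to \overline{M} \to 0$ and $0 \to \sH_\ft \to B \to B/\sH_\ft \to 0$, plus vanishing of the $j$-multiplicity of the torsion/cotorsion term,'' where that vanishing comes from \autoref{lem_dim_GM} (whose proof contains the very graded-piece computation you redo to show $\big[\fG_K\big]_{\fv,\fn}$ vanishes for $\fv \gg \mathbf{0}$). Your auxiliary lemma is just \autoref{lem_dim_GM} specialized to a module $K$ with $\widetilde{K}=0$, so the argument matches the intended one.
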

\begin{proof}
	Both statements follow from \autoref{lem_dim_GM} and \autoref{prop_additive}.
\end{proof}	
	
	Finally, we have the following nonincreasing behavior for the mixed $j$-multiplicities $j_{\alpha,\beta}^{\#}$. 

\begin{corollary}
	\label{cor_ineq_j_sharp}
	Let $\ft, \ft' \in \NN^p$ such that $\ft' \ge \ft$.
	 Then, for all $\alpha,\beta \in \NN^p$ with $|\alpha|+|\beta| \ge \dim(X)$, we have
	 $$
	 j_{\alpha,\beta}^{\#}\big(H_1,\ldots,H_p; \sH_{\ft'}\big) \;\le\; j_{\alpha,\beta}^{\#}\big(H_1,\ldots,H_p; \sH_{\ft}\big).
	 $$
\end{corollary}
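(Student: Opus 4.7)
The plan is to first establish the pointwise inequality $P_{\ft'}^{H_1,\ldots,H_p} \le P_\ft^{H_1,\ldots,H_p}$ between the polynomials of \autoref{thm_new_joint_Bl}, and then to upgrade it to the coefficient-wise statement. Setting $\fs := \ft'-\ft \ge \mathbf{0}$ and using standardness of $B$ together with $H_i \subseteq [B]_{\ee_i}$, I would first check that $H_1^{s_1}\cdots H_p^{s_p}[B]_\ft \subseteq [B]_{\ft+\fs} = [B]_{\ft'}$. Substituting this into the ``denominator'' $D_\ft(\fv,\fn) := H_1^{v_1+n_1-t_1}\cdots H_p^{v_p+n_p-t_p}[B]_\ft$ that appears in the formula for $\lambda_\ft^{H_1,\ldots,H_p}$ produces the inclusion $D_\ft(\fv,\fn) \subseteq D_{\ft'}(\fv,\fn)$. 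Consequently $\lambda_{\ft'}^{H_1,\ldots,H_p}(\fv,\fn) \le \lambda_\ft^{H_1,\ldots,H_p}(\fv,\fn)$ for all $\fv, \fn \gg \mathbf{0}$, so $P_{\ft'}^{H_1,\ldots,H_p} \le P_\ft^{H_1,\ldots,H_p}$ pointwise.

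To upgrade this pointwise polynomial inequality to the per-coefficient inequality $j^\#_{\alpha,\beta}(H_1,\ldots,H_p;\sH_{\ft'}) \le j^\#_{\alpha,\beta}(H_1,\ldots,H_p;\sH_\ft)$, the plan is to reduce to the single-step case $\ft' = \ft+\ee_i$ (the general case follows by chaining) and then to identify the difference $D_{\ft+\ee_i}(\fv,\fn)/D_\ft(\fv,\fn)$ explicitly. A direct calculation factoring out $H_i$ gives an isomorphism of the form
\[
D_{\ft+\ee_i}(\fv,\fn)/D_\ft(\fv,\fn) \;\cong\; M(\fv,\fn) \cdot \bigl([B]_{\ee_i}/H_i\bigr)[B]_\ft,
\]
where $M(\fv,\fn)$ is the appropriate product of powers of $H_1,\ldots,H_p$. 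I would then realize the collection $\bigoplus_{\fv,\fn \gg \mathbf{0}} D_{\ft+\ee_i}(\fv,\fn)/D_\ft(\fv,\fn)$ as (up to identification in large degrees) the graded pieces of a finitely generated module $K$ over the algebra $\overline{\fG}$ of \autoref{rem_finite_length}, fitting into a short exact sequence
\[
0 \longrightarrow K \longrightarrow B_+\fG_{\sH_\ft} \longrightarrow B_+\fG_{\sH_{\ft+\ee_i}} \longrightarrow 0
\]
of $\overline{\fG}$-modules. Additivity of mixed multiplicities on short exact sequences (cf.\ \autoref{lem_j_mult_add} and \autoref{sect_mixed_j_mult}) then yields the equality $e(\alpha,\beta;B_+\fG_{\sH_\ft}) = e(\alpha,\beta;K) + e(\alpha,\beta;B_+\fG_{\sH_{\ft+\ee_i}})$, from which the desired inequality follows by nonnegativity of $e(\alpha,\beta;K)$. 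The case $|\alpha|+|\beta| > \dim(X)$ is trivial since both sides vanish by \autoref{lem_dim_GM}.

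The main obstacle is precisely the construction of the $\overline{\fG}$-module short exact sequence in the previous step: while the graded-piece surjection $[B_+\fG_{\sH_\ft}]_{\fv,\fn} \twoheadrightarrow [B_+\fG_{\sH_{\ft+\ee_i}}]_{\fv,\fn}$ coming from $D_\ft \subseteq D_{\ft+\ee_i}$ is immediate, verifying that it assembles into a genuine $\overline{\fG}$-linear morphism requires a careful analysis of the interplay between the $\fG$-action (descending from $\fC$) and the filtration $\{\sH_\ft\}$ of ideals of $B$. As an alternative route that avoids this direct check, I would adapt the extended Rees algebra strategy from the proof of \autoref{prop_additive}: working over $\fC^+$ and its localization at $(\mm,T^{-1})$, applying the snake lemma with $B_+(\,\cdot\,)$ inserted throughout the key diagrams, and thereby deriving the required additivity for $j^\#$ directly from the short exact sequence $0 \to \sH_{\ft+\ee_i} \to \sH_\ft \to \sH_\ft/\sH_{\ft+\ee_i} \to 0$ of $B$-modules.
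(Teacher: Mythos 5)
Your opening step (the inclusion $D_\ft(\fv,\fn)\subseteq D_{\ft'}(\fv,\fn)$, hence $\lambda_{\ft'}^{H_1,\ldots,H_p}\le\lambda_{\ft}^{H_1,\ldots,H_p}$ and $P_{\ft'}^{H_1,\ldots,H_p}\le P_{\ft}^{H_1,\ldots,H_p}$ pointwise) is correct but, as you yourself note, carries no weight on its own: in several variables a pointwise inequality of polynomials does not dominate individual top-degree coefficients (e.g.\ $xy\le x^2+y^2$ on the positive orthant), so the whole burden falls on the module-theoretic upgrade, and that is precisely where the proposal has a genuine gap. The short exact sequence $0\to K\to B_+\fG_{\sH_{\ft}}\to B_+\fG_{\sH_{\ft+\ee_i}}\to 0$ is not available: the only natural map induced by $\sH_{\ft+\ee_i}\subseteq\sH_{\ft}$ goes in the opposite direction, $\fG_{\sH_{\ft+\ee_i}}\to\fG_{\sH_{\ft}}$, and is in general neither injective nor surjective. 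Moreover, the graded piece $\big[B_+\fG_{\sH_{\ft}}\big]_{\fv,\fn}$ is the associated graded object $\bigoplus_{k}\big[I^kI_1^{n_1}\cdots I_p^{n_p}\sH_{\ft}/I^{k+1}I_1^{n_1}\cdots I_p^{n_p}\sH_{\ft}\big]_{\fv+\fn}$, which merely has the \emph{same length} as the quotient $H_1^{n_1}\cdots H_p^{n_p}[B]_{\fv}/D_{\ft}(\fv,\fn)$; so the ``immediate degreewise surjection'' is an identification of lengths, not a canonical map, and there is nothing to assemble into a $\fG$-linear (or $\overline{\fG}$-linear) morphism. The fallback is equally unproven: inserting $B_+(-)$ into the snake-lemma diagrams of \autoref{prop_additive} destroys exactness, since $M\mapsto B_+M$ is not an exact functor, and additivity of $j^{\#}_{\alpha,\beta}$ along $0\to\sH_{\ft+\ee_i}\to\sH_{\ft}\to\sH_{\ft}/\sH_{\ft+\ee_i}\to 0$ is neither established in your argument nor asserted in the paper (\autoref{prop_additive} gives additivity only for $j_{\alpha,\beta}(\fG_{(-)})$, without the $B_+$).

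For comparison, the paper sidesteps all of this by passing to the complementary modules: since $j_{\alpha,\beta}(\fG_{\sH_{\ft}})=j_{\alpha,\beta}(\fG_{\sH_{\ft'}})$ by \autoref{cor_torsion_my_j_mult}, the desired inequality is equivalent to $j_{\alpha,\beta}\big(\fG_{\sH_{\ft}}/B_+\fG_{\sH_{\ft}}\big)\le j_{\alpha,\beta}\big(\fG_{\sH_{\ft'}}/B_+\fG_{\sH_{\ft'}}\big)$, and these fiber cones have honest graded pieces $H_1^{v_1+n_1-t_1}\cdots H_p^{v_p+n_p-t_p}[B]_{\ft}$ sitting inside $B$. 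After reducing to $(0:_BB_{++})=0$ and to an infinite residue field, multiplication by a general nonzerodivisor $x\in[B]_{\ft'-\ft}$ gives a degree-shifted injection of the $\ft$-fiber cone into the $\ft'$-fiber cone, and \autoref{lem_j_mult_add} concludes. If you want to rescue your single-step reduction, an explicit graded map of this kind is what you must produce; the surjection you posited does not exist.
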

\begin{proof}
	By \autoref{cor_torsion_my_j_mult} and \autoref{lem_j_mult_add}, we may assume that $(0:_{B} B_{++}) = 0$ (i.e., $\grade(B_{++})>0$) and it suffices to show that 
	$$
	j_{\alpha,\beta}\big(\fG_{\sH_{\ft'}}/B_+\fG_{\sH_{\ft'}}\big) \;\ge \; j_{\alpha,\beta}\big(\fG_{\sH_{\ft}}/B_+\fG_{\sH_{\ft}}\big).
	$$
	Notice that $\fG_{\sH_{\ft}}/B_+\fG_{\sH_{\ft}} \cong \Rees(I,I_1,\ldots,I_p; \sH_{\ft}) / B_+\Rees(I,I_1,\ldots,I_p; \sH_{\ft})$ is a (relative) fiber cone and that we have the following explicit description
	$$
	\fG_{\sH_{\ft}}/B_+\fG_{\sH_{\ft}} \;=\; \bigoplus_{\fv,\fn\in \NN^p} \big[\fG_{\sH_{\ft}}/B_+\fG_{\sH_{\ft}}\big]_{\fv,\fn}  \;=\; \bigoplus_{\fv,\fn\in \NN^p, \fv \ge \ft} H_1^{v_1+n_1-t_1}\cdots H_p^{v_p+n_p-t_p} [\sB]_\ft;
	$$
	see the computations in the proof of \autoref{thm_new_joint_Bl}.
	Consider the standard $(\NN^p\oplus\NN^p)$-graded $R$-algebra $S:=\fG/B_+\fG$.
	By using the faithfully flat extension $R \rightarrow R'=R[z]_{\mm R[z]}$ and making the base change $B \rightarrow B \otimes_R R'$, we may assume that the residue field of $R$ is infinite.
	Then we can find an element $x \in [B]_{\ft'-\ft}$ which is a nonzerodivisor (see, e.g., \cite[Proposition 1.5.12]{BRUNS_HERZOG}), and so multiplication by $x$ yields the following inclusion 
			\begin{equation*}
		\begin{tikzpicture}[baseline=(current  bounding  box.center)]
			\matrix (m) [matrix of math nodes,row sep=3em,column sep=6.5em,minimum width=2em, text height=1.5ex, text depth=0.25ex]
			{
			   \Big(\fG_{\sH_{\ft}}/B_+\fG_{\sH_{\ft}}\Big)(\ft-\ft', \mathbf{0})	& 	\fG_{\sH_{\ft'}}/B_+\fG_{\sH_{\ft'}}\\			
			};
			\draw[right hook->] (m-1-1)--(m-1-2) node [midway,above] {$x$};	
			;		
		\end{tikzpicture}	
	\end{equation*}
	of finitely generated $(\NN^p \oplus \NN^p)$-graded $S$-modules.
	Finally, \autoref{lem_j_mult_add} yields the required inequality $j_{\alpha,\beta}(\fG_{\sH_{\ft}}/B_+\fG_{\sH_{\ft}}) \le j_{\alpha,\beta}(\fG_{\sH_{\ft'}}/B_+\fG_{\sH_{\ft'}})$.
\end{proof}

	\section{Relative mixed multiplicities}
	\label{sect_relative_mult}	
	
	In this section, we introduce the notion of relative mixed multiplicities. 
	Our existence proof follows by combining \autoref{thm_poly_KT} and \autoref{thm_new_joint_Bl}.
	The setup below is used during this section.

	\begin{setup}
		\label{setup_rel_mult}
		Let $A \subseteq B$ be an inclusion of standard $\NN^p$-graded algebras over a Noetherian local ring $R = [A]_{(0,\ldots0)} = [B]_{(0,\ldots0)}$.
		Assume that $\length_R\left([B]_{\ee_i}/[A]_{\ee_i}\right) < \infty$ for all $1 \le i \le p$.
		Let $X = \multProj(B)$ and $r = \dim(X)$.
	\end{setup}
	
	The theorem below is the main result of this section.
	
	\begin{theorem}	
		\label{thm_relative_mixed_mult}
		Assume \autoref{setup_rel_mult}.
		Fix a tuple $\ft = (t_1,\ldots,t_p) \in \ZZ_+^p$ of positive integers.
		Then the following function
		$$
		\lambda_\ft^{A,B}(n_1,\ldots,n_p) \;=\; {\rm length}_R\left(
		\frac{\left[\sB\right]_{(n_1,\ldots,n_p)}} {\left[\sA\right]_{(n_1-t_1+1,\ldots,n_p-t_p+1)}\left[\sB\right]_{(t_1-1,\ldots,t_p-1)}}
		\right)
		$$
		coincides with a polynomial $P_{\ft}^{A,B}(n_1,\ldots,n_p)$ for all $n_i \gg 0$.
		Moreover, $P_{\ft}^{A,B}$ has total degree at most $r$ and its normalized leading coefficients are nonnegative integers.
	\end{theorem}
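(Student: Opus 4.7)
The plan is to reduce the statement to \autoref{thm_poly_KT} and \autoref{thm_new_joint_Bl} via a length-additive filtration on $[B]_{\bv+\bn}$. Set $H_i := [A]_{\ee_i}$; the standing hypothesis $\length_R([B]_{\ee_i}/H_i) < \infty$ forces $\mm^c[B]_{\ee_i} \subseteq H_i$ for some integer $c$, so the closed subscheme of $X$ cut out by the $H_i$'s is contained in the closed fiber $X \times_{\Spec(R)} \Spec(R/\mm)$. This is precisely the closed-fiber hypothesis needed to apply \autoref{thm_poly_KT} with $M = B$ and $\dd_i = \ee_i$, and \autoref{thm_new_joint_Bl} applies directly.

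The key algebraic identity I would prove is
\[
\lambda_\ft^{A,B}(\bv+\bn) \;=\; \lambda_B^{H_1,\ldots,H_p}(\bv,\bn) + \lambda_{\ft-\mathbf{1}}^{H_1,\ldots,H_p}(\bv,\bn) \qquad \text{for all } \bv \ge \ft - \mathbf{1},
\]
where the first summand is the length function of \autoref{thm_poly_KT} (applied to $M = B$) and the second is that of \autoref{thm_new_joint_Bl} with $\ft$ replaced by $\ft - \mathbf{1}$. This follows from additivity of length along the chain
\[
[B]_{\bv+\bn} \;\supseteq\; [A]_\bn[B]_\bv \;\supseteq\; [A]_{\bv+\bn-\ft+\mathbf{1}}[B]_{\ft-\mathbf{1}},
\]
where the inner inclusion uses the factorization $[A]_{\bv+\bn-\ft+\mathbf{1}} = [A]_\bn \cdot [A]_{\bv-\ft+\mathbf{1}}$ combined with $[A]_{\bv-\ft+\mathbf{1}}[B]_{\ft-\mathbf{1}} \subseteq [B]_\bv$ coming from standardness of $B$; the middle quotient then matches the formula of \autoref{thm_new_joint_Bl} by inspection.

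Invoking the two cited theorems, for $\bv,\bn \gg \mathbf{0}$ the right-hand side becomes the polynomial $P_B^{H_1,\ldots,H_p}(\bv,\bn) + P_{\ft-\mathbf{1}}^{H_1,\ldots,H_p}(\bv,\bn)$ of total degree at most $r = \dim(X)$. Since the left-hand side depends only on $\bv + \bn$, this polynomial in $(\bv,\bn)$ must factor through the linear map $(\bv,\bn) \mapsto \bv + \bn$; setting $\bv = \mathbf{0}$ yields the explicit formula
\[
P_\ft^{A,B}(\bn') \;:=\; P_B^{H_1,\ldots,H_p}(\mathbf{0}, \bn') + P_{\ft-\mathbf{1}}^{H_1,\ldots,H_p}(\mathbf{0}, \bn'),
\]
and fixing $\bv$ large while letting $\bn'$ grow gives $\lambda_\ft^{A,B}(\bn') = P_\ft^{A,B}(\bn')$ for $\bn' \gg \mathbf{0}$. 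The degree bound is preserved under the substitution, and the normalized coefficient of $\bn'^\beta$ with $|\beta| = r$ picks up only the $(\mathbf{0},\beta)$-coefficient of each summand, which by \autoref{thm_poly_KT} and \autoref{thm_new_joint_Bl} equals $\br_\beta(H_1,\ldots,H_p;B)$ and $j_\beta^\#(H_1,\ldots,H_p;\sH_{\ft-\mathbf{1}})$ respectively, giving a nonnegative integer as a sum of two such.

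The step I expect to require the most care is verifying the middle inclusion of the filtration and matching its quotient precisely with the length function of \autoref{thm_new_joint_Bl}; once that bookkeeping is done, the polynomial structure, the degree bound, and the nonnegativity of the normalized leading coefficients are formal consequences of the two theorems together with the observation that a polynomial in $(\bv,\bn)$ which agrees with a function of $\bv+\bn$ on a cofinite subset must itself factor through $(\bv,\bn)\mapsto\bv+\bn$.
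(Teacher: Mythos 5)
Your proposal is correct and follows essentially the same route as the paper: the same filtration $[B]_{\bv+\bn} \supseteq [A]_\bn[B]_\bv \supseteq [A]_{\bv+\bn-\ft+\mathbf{1}}[B]_{\ft-\mathbf{1}}$, with the two quotients handled by \autoref{thm_poly_KT} (for $M=B$) and \autoref{thm_new_joint_Bl} (at $\ft-\mathbf{1}$), yielding the same identification of the leading coefficients as $\br_\beta + j_\beta^{\#}$. The only cosmetic difference is that you justify passing to a polynomial in $\bn$ alone via the observation that the two-variable polynomial factors through $(\bv,\bn)\mapsto \bv+\bn$, whereas the paper simply substitutes $\fv=\fv'$.
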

	\begin{proof}
		The proof follows by using \autoref{thm_poly_KT}, \autoref{thm_new_joint_Bl} and the following short exact sequence 
		$$
		0 \;\longrightarrow\; \frac{[\sA]_\fn[\sB]_{\fv}}{[\sA]_{\fv+\fn-\ft+\mathbf{1}}  [\sB]_{\ft-\mathbf{1}}} \;\longrightarrow\; \frac{[B]_{\fv+\fn}}{[\sA]_{\fv+\fn-\ft+\mathbf{1}} [\sB]_{\ft-\mathbf{1}}} \;\longrightarrow\; \frac{[B]_{\fv + \fn}}{[\sA]_\fn[\sB]_{\fv}} \;\longrightarrow\; 0.
		$$
		From \autoref{thm_new_joint_Bl}, we obtain a polynomial $P_{\ft-\mathbf{1}}^{[A]_{\ee_1},\ldots,[A]_{\ee_p}}(\fv, \fn)$ of total degree at most $r$ that satisfies the equality 
		$$
		P_{\ft-\mathbf{1}}^{[A]_{\ee_1},\ldots,[A]_{\ee_p}}(\fv, \fn) \;=\; \length_R\left(\frac{[\sA]_\fn[\sB]_{\fv}}{[\sA]_{\fv+\fn-\ft+\mathbf{1}}  [\sB]_{\ft-\mathbf{1}}}\right)
		$$
		for all $\fv \gg \mathbf{0}$ and $\fn \gg \mathbf{0}$.
		Similarly, \autoref{thm_poly_KT} gives a polynomial $P_B^{[A]_{\ee_1},\ldots,[A]_{\ee_p}}(\fv, \fn)$ of total degree at most $r$ such that 
		$$
		P_B^{[A]_{\ee_1},\ldots,[A]_{\ee_p}}(\fv, \fn) \;=\;  \length_R\left(\frac{[B]_{\fv + \fn}}{[\sA]_\fn[\sB]_{\fv}}\right)
		$$
		for all $\fv \gg \mathbf{0}$ and $\fn \gg \mathbf{0}$.
		Consider the polynomial $P(\fv,\fn) := P_{\ft-\mathbf{1}}^{[A]_{\ee_1},\ldots,[A]_{\ee_p}}(\fv, \fn) + P_B^{[A]_{\ee_1},\ldots,[A]_{\ee_p}}(\fv, \fn)$.
		Then there exist vectors $\fv' \in \NN^p$ and $\fn' \in \NN^p$ such that 
		$$
		P(\fv,\fn) \;=\;  \length_R\left(\frac{[B]_{\fv+\fn}}{[\sA]_{\fv+\fn-\ft+\mathbf{1}} [\sB]_{\ft-\mathbf{1}}}\right)
		$$
		for all $\fv \ge \fv'$ and $\fn \ge \fn'$.
		By taking $P_{\ft}^{A,B}(\bn) := P(\fv', \fn - \fv')$, the result of the theorem follows.
	\end{proof}

	With \autoref{thm_relative_mixed_mult} in hand we are ready to introduce our new invariants: \emph{relative mixed multiplicities}.

	\begin{definition}
		Assume \autoref{setup_rel_mult}.
		Fix a tuple $\ft = (t_1,\ldots,t_p) \in \ZZ_+^p$ of positive integers, and let $P_{\ft}^{A,B}$ as in \autoref{thm_relative_mixed_mult}.
		Write 
		\[
		P_\ft^{A,B}(n_1,\ldots,n_p) \;=\;  \sum_{\beta \in \NN^p,\, |\beta|=r} e_\ft\left(\beta; A, B\right) \, \frac{n_1^{\beta_1}\cdots n_p^{\beta_p}}{\beta_1!\cdots\beta_p!} \;+\; \text{\rm(lower degree terms)}.
		\]
		We say that the nonnegative integer $e_\ft\left(\beta; A, B\right) \ge 0$ is the \emph{relative mixed multiplicity of $B$ over $A$ of type $(\ft, \beta)$}.
		For any $\beta \in \NN^p$ with $|\beta| > r$, we set $e_\ft(\beta; A, B) := 0$.
	\end{definition}	
	
	The next lemma shows that relative mixed multiplicities are nonincreasing on the parameter $\ft \in \ZZ^p$ and that they eventually stabilize with value equal to a mixed Buchsbaum-Rim multiplicity.
	
	\begin{lemma}
		\label{lem_properties}
		Let $\beta \in \NN^p$ with $|\beta| \ge r$.
		The following statements hold: 
		\begin{enumerate}[\rm (i)]
			\item $e_\ft\left(\beta; A, B\right) = \br_\beta\left([A]_{\ee_1}, \ldots, [A]_{\ee_p}; B\right) + j_{\beta}^{\#}\big([A]_{\ee_1}, \ldots, [A]_{\ee_p}; \sH_{\ft -\mathbf{1}}\big)$ for all $\ft \in \ZZ_+^p$.
			\item {\rm (Nonincreasing):} For any $\ft'\in \ZZ_+^p$ with $\ft' \ge \ft$, we have $e_{\ft'}\left(\beta; A, B\right) \le e_\ft\left(\beta; A, B\right)$.
			\item {\rm (Stabilization):} $$
			\lim_{t_1\to \infty, \ldots, t_p \to \infty} e_{(t_1,\ldots,t_p)}\left(\beta; A, B\right) \;=\; \br_\beta\left([A]_{\ee_1}, \ldots, [A]_{\ee_p}; B\right).$$
		\end{enumerate}
	\end{lemma}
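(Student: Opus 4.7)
The plan is to derive all three statements from the polynomial identity established in the proof of Theorem \ref{thm_relative_mixed_mult}, together with the monotonicity from Corollary \ref{cor_ineq_j_sharp}.

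First I would prove (i) by unwinding the construction of $P_\ft^{A,B}$ in the proof of Theorem \ref{thm_relative_mixed_mult}: it is produced as $P(\fv',\fn-\fv')$, where
$P(\fv,\fn) := P_{\ft-\mathbf{1}}^{[A]_{\ee_1},\ldots,[A]_{\ee_p}}(\fv,\fn) + P_B^{[A]_{\ee_1},\ldots,[A]_{\ee_p}}(\fv,\fn)$
is a polynomial of total degree $\le r$ in $(\fv,\fn)$. The coefficient of $\fn^\beta$ (for $|\beta|=r$) in $P(\fv',\fn-\fv')$ can only receive contributions from the monomials $\fv^\alpha \fn^{\beta'}$ of $P$ with $\alpha=\mathbf{0}$ and $\beta'=\beta$: expanding $(\fn-\fv')^{\beta'}$ contributes to $\fn^\beta$ only when $\beta'\ge\beta$, while the total-degree bound $|\alpha|+|\beta'|\le r=|\beta|$ then forces $\alpha=\mathbf{0}$ and $\beta'=\beta$. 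By Theorem \ref{thm_poly_KT} together with Definition \ref{def_mixed_BR}, the resulting contribution from $P_B^{[A]_{\ee_1},\ldots}$ equals $\br_\beta([A]_{\ee_1},\ldots,[A]_{\ee_p};B)/\beta!$; by Theorem \ref{thm_new_joint_Bl} together with Remark \ref{rem_finite_length}, the contribution from $P_{\ft-\mathbf{1}}^{[A]_{\ee_1},\ldots}$ equals $j_\beta^\#([A]_{\ee_1},\ldots,[A]_{\ee_p};\sH_{\ft-\mathbf{1}})/\beta!$. Summing gives (i).

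Part (ii) is then a direct consequence of (i) and the monotonicity of $j_\beta^\#$ in $\ft$ supplied by Corollary \ref{cor_ineq_j_sharp} (applied to $\ft'-\mathbf{1}\ge\ft-\mathbf{1}$). For (iii), (ii) tells us that $e_\ft(\beta;A,B)$ is a nonincreasing family of nonnegative integers, so the componentwise limit exists and by (i) it is at least $\br_\beta$. For the matching upper bound I would compare Hilbert functions directly, bypassing the $j^\#$ bookkeeping: for $\ft-\mathbf{1}\gg\mathbf{0}$, specializing the defining identity of $P_B^{[A]_{\ee_1},\ldots}$ (Theorem \ref{thm_poly_KT}) at $\fv=\ft-\mathbf{1}$ and $\fn\mapsto\fn-\ft+\mathbf{1}$ recovers $\lambda_\ft^{A,B}(\fn)$ for all $\fn\gg\ft$. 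Since both sides are polynomials in $\fn$ for fixed $\ft$, this yields $P_\ft^{A,B}(\fn)=P_B^{[A]_{\ee_1},\ldots}(\ft-\mathbf{1},\fn-\ft+\mathbf{1})$ as polynomials in $\fn$, and the same coefficient-extraction argument as in (i) shows that its coefficient of $\fn^\beta/\beta!$ is exactly $\br_\beta$; hence $e_\ft(\beta;A,B)=\br_\beta$ for all $\ft\gg\mathbf{0}$, and stabilization in fact occurs in finite time.

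The main obstacle is the coefficient-extraction step used in both (i) and (iii): one must verify that the total-degree bound on the bivariate polynomial forces all $\fv$-dependence to disappear from the $\fn^\beta$-coefficient after substituting and shifting, and one must match the resulting scalars with the specific leading coefficients identified by Theorems \ref{thm_poly_KT} and \ref{thm_new_joint_Bl}. Apart from this, the argument is essentially a bookkeeping exercise with leading coefficients of multigraded Hilbert polynomials.
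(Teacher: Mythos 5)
Your proposal is correct and takes essentially the same route as the paper: part (i) is the paper's ``clear from the proof of \autoref{thm_relative_mixed_mult}'' with the coefficient-extraction detail made explicit, part (ii) is the same appeal to \autoref{cor_ineq_j_sharp}, and part (iii) reproduces the paper's argument of specializing $P_B^{[A]_{\ee_1},\ldots,[A]_{\ee_p}}$ at $\fv=\ft-\mathbf{1}$ and comparing leading coefficients in $\fn$, yielding stabilization in finite time. The ``main obstacle'' you flag (that the total-degree bound forces all $\fv$-dependence out of the top $\fn$-coefficients after substituting and shifting) is resolved correctly and is precisely the step the paper leaves implicit.
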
	
	\begin{proof}
		(i) This part is clear from the proof of \autoref{thm_relative_mixed_mult}.
		
		(ii) The result follows from part (i) and \autoref{cor_ineq_j_sharp}.
		
		(iii) By \autoref{thm_poly_KT} and \autoref{thm_relative_mixed_mult}, there is some $\fv' \in \NN^p$ such that, for all $\ft > \fv'$ and $\bn \gg \mathbf{0}$, we have the equalities
		$$
		P_B^{[A]_{\ee_1},\ldots, [A]_{\ee_p}}(\ft-\mathbf{1}, \bn)  \;=\; \length_R\left(\frac{[B]_{\fn+\ft-\mathbf{1}}}{[\sA]_\fn[\sB]_{\ft-\mathbf{1}}}\right) = P_\ft^{A, B}(\bn+\ft-\mathbf{1}).
		$$
		So, for all $\ft \ge \fv'$,  by comparing the leading coefficients of the polynomials $P_B^{[A]_{\ee_1},\ldots, [A]_{\ee_p}}(\ft-\mathbf{1}, \bn)$ and  $P_\ft^{A, B}(\bn+\ft-\mathbf{1})$ in $\bn$, the claim follows.
	\end{proof}	
	
	In our applications in the next section, the relative mixed multiplicities $e_{(1,\ldots,1)}(\beta; A,B)$ and the stable values will play an important role.
	Hence we introduce the following notation.
	
	\begin{notation}
		For all $\beta \in \NN^p$ with $|\beta| \ge r$, set 
		\[
		e(\beta; A,B) := e_{(1,\ldots,1)}(\beta; A,B) \qquad \text{ and } \qquad e_\infty\left(\beta; A, B\right) := \lim_{t_1\to \infty, \ldots, t_p \to \infty} e_{(t_1,\ldots,t_p)}\left(\beta; A, B\right).
		\]
	\end{notation}

	\section{Applications}
	\label{sect_applications}
	
	In this section, we use relative mixed multiplicities to provide integral dependence and birationality criteria in a multigraded setting. 
	These results show that relative mixed multiplicities extend the fact that relative multiplicities can detect integral dependence and birationality in an $\NN$-graded setting (see \cite{SUV_MULT}).
	Throughout this section we use the following setup. 
	
	\begin{setup}
		\label{setup_applications}
		Let $(R, \mm, \kappa)$ be a Noetherian local ring, and $A \subseteq B$ be an inclusion of standard $\NN^p$-graded algebras over $R = [A]_{(0,\ldots0)} = [B]_{(0,\ldots0)}$. 
		Assume that $\length_R\left([B]_{\ee_i}/[A]_{\ee_i}\right) < \infty$ for all $1 \le i \le p$.
	\end{setup}
	
	Our goal is to prove the following theorem.

	\begin{theorem}
		\label{thm_criteria}
		Assume \autoref{setup_applications}.
		Consider the associated morphism $$
		f \;:\; U \;\subseteq\; X = \multProj(B) \;\longrightarrow\; Y = \multProj(A)
		$$
		where $U = X \setminus V_+(A_{++}B).$
		Set $r = \dim(X)$.
		Then the following statements hold: \smallskip
		\begin{enumerate}[\rm (i)]
			\item If we obtain a finite morphism $f : X \rightarrow Y$, then $e_\infty\left(\beta; A, B\right) = 0$ for all $\beta \in \NN^p$ with $|\beta| = r$.
			The converse holds if $B$ is equidimensional and catenary.
			\item If we obtain a finite birational morphism $f:X\rightarrow Y$, then $e\left(\beta; A, B\right) = 0$ for all $\beta \in \NN^p$ with $|\beta| = r$.
			The converse holds if $B$ is equidimensional and catenary.
		\end{enumerate}
	\end{theorem}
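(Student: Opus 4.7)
The plan is to exploit the identity from \autoref{lem_properties}(i),
$$
e(\beta; A, B) \;=\; \br_\beta\bigl([A]_{\ee_1}, \ldots, [A]_{\ee_p}; B\bigr) \;+\; j_\beta^{\#}\bigl([A]_{\ee_1}, \ldots, [A]_{\ee_p}; B\bigr),
$$
together with the stabilization $e_\infty(\beta; A, B) = \br_\beta([A]_{\ee_1}, \ldots, [A]_{\ee_p}; B)$ from \autoref{lem_properties}(iii). Since both summands are nonnegative, the vanishing of $e(\beta;A,B)$ for all $|\beta|=r$ is equivalent to the simultaneous vanishing of $e_\infty(\beta;A,B)$ and $j_\beta^{\#}([A]_{\ee_1},\ldots,[A]_{\ee_p};B)$. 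Hence part (ii) reduces to part (i) plus a separate criterion that detects birationality through the vanishing of the top-degree $j^{\#}$-terms.

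For part (i), recall from \autoref{def_mixed_BR} and \autoref{thm_poly_KT} that $e_\infty(\beta;A,B)$ is a normalized leading coefficient of the Hilbert polynomial associated with $\length_R\bigl([B]_{\fv+\bn}/[A]_{\bn}[B]_{\fv}\bigr)$, equivalently an intersection number on the Kleiman--Thorup transform $P=P_X^{Z_1,\ldots,Z_p}$ with $Z_i=V_+([A]_{\ee_i}B)$. In the forward direction, finiteness of $f$ forces $B$ to be integral over $A$ in the multigraded sense, so that $V_+(A_{++}B)=\emptyset$ and the sheaves $[B]_\bn/[A]_\bn$ are supported in the closed fiber with bounded generic rank; a standard multigraded dimension count then shows that this polynomial has total degree strictly less than $r$, giving $e_\infty(\beta;A,B)=0$ on top degree. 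For the converse (assuming $B$ equidimensional and catenary), the plan is to apply \autoref{lem_assoc_j_mult} to express each $e_\infty(\beta;A,B)$ as a sum over minimal primes $\pp$ of the relevant graded quotient lying in the closed fiber; the equidimensional-plus-catenary hypothesis guarantees each such $B/\pp$ has the expected dimension $r+p$, so no contributions can cancel, and the simultaneous vanishing of all top-degree $e_\infty$ forces the fiber cone of $f$ to have dimension strictly less than $r+p$, which by standard multigraded formulas is equivalent to finiteness of $f$.

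For part (ii), the forward direction follows from part (i) combined with the observation that when $f$ is birational, the generic stalks of $[B]_\bn/[A]_\bn$ vanish, so the $B_+\fG_B$-part of the Hilbert polynomial from \autoref{thm_new_joint_Bl} has total degree strictly less than $r$ and each top-degree $j_\beta^{\#}$ vanishes. For the converse, the assumption $e(\beta;A,B)=0$ for all $|\beta|=r$ yields the simultaneous vanishing of both $e_\infty$ and $j^{\#}$; part (i) supplies finiteness, and we again combine \autoref{lem_assoc_j_mult} with the equidimensional-plus-catenary hypothesis on $B$ to deduce from the vanishing of $j_\beta^{\#}$ that $A$ and $B$ agree at every generic point of $X$, i.e., that $f$ is birational.

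The hardest step will be each converse direction: extracting a geometric conclusion from the vanishing of a list of numerical invariants. The assumption that $B$ is equidimensional and catenary is precisely what ensures every minimal prime appearing in the associativity formula \autoref{lem_assoc_j_mult} contributes a multiplicity of the expected top dimension, so that the sum can vanish only when each term vanishes. Carrying out this prime-by-prime analysis --- and matching it against the fiber-dimension behavior of $f:X\to Y$ via standard multigraded dimension formulas --- is where the bulk of the argument will lie; this is also where the improvement over the usual ``universally catenary'' hypothesis noted after \autoref{thmB} must be justified.
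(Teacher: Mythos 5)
Your overall decomposition via \autoref{lem_properties}(i) (vanishing of $e(\beta;A,B)$ on top degree $\Leftrightarrow$ vanishing of both $\br_\beta$ and $j_\beta^{\#}$) is consistent with the paper, but the core of your argument has genuine gaps. First, in the forward direction of (i) you assert that finiteness of $f$ ``forces $B$ to be integral over $A$ in the multigraded sense.'' This is false, and the paper itself exhibits a counterexample right after the theorem: for $A=\kk[x_1,x_2,y_1,y_2]\hookrightarrow B=\kk[x_1,\ldots,y_3]/(x_3(y_1,y_2,y_3),\,y_3(x_1,x_2,x_3))$ the induced map is an isomorphism of biprojective schemes, yet $A\hookrightarrow B$ is not integral. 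The paper avoids this by first replacing $A,B$ with $\overline{A},\overline{B}$ and then passing to the Segre (diagonal) subalgebras $S=\bigoplus_n [A]_{(n,\ldots,n)}\subseteq T=\bigoplus_n [B]_{(n,\ldots,n)}$, where finiteness of $f$ does become integrality and the vanishing of all top-degree mixed relative multiplicities is equivalent to the vanishing of the single-graded $e_\infty(S,T)$, resp.\ $e(S,T)$; your sketch has no such reduction.

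Second, and more seriously, your plan for the converse of (i) is the standard fiber-cone/associated-graded dimension argument (``vanishing forces the fiber cone to have dimension $<r+p$, which is equivalent to finiteness''), and this is exactly the route the paper explains (\autoref{rem_weaker_cond}) requires $B$ to be equidimensional and \emph{universally} catenary, because one needs the associated graded ring to be equidimensional and catenary. You give no reason why catenary alone suffices, so the claimed improvement is unjustified. Moreover, \autoref{lem_assoc_j_mult} does not apply to $e_\infty(\beta;A,B)=\br_\beta([A]_{\ee_1},\ldots,[A]_{\ee_p};B)$: that invariant is an intersection number on the Kleiman--Thorup transform (a leading coefficient of the two-parameter function of \autoref{thm_poly_KT}), not a mixed $j$-multiplicity of a $B$-module, so there is no associativity formula to expand it prime by prime. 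The paper's actual mechanism is different: if $S\subseteq T$ is not integral, pick a closed point $x\in V_+(S_+T)\subseteq\Proj(T)$; \autoref{lem_dim_closed_point} (this is where ``equidimensional and catenary'' enters, via chains of homogeneous primes in $B$ itself) gives $\dim(\OO_x)=r$, and the Rees-style \autoref{lem_properties_closed_point} bounds $\length_R([T]_{v+nd}/[S]_{nd}[T]_v)$ below by the Hilbert--Samuel polynomial of $\OO_x$, of degree $r$ with positive leading coefficient, forcing $e_\infty(S,T)>0$. This closed-point argument is the crux of the theorem and is absent from your proposal. Finally, for (ii) your claim that vanishing of the $j^{\#}_{\mathbf{0},\beta}$ alone yields equality of stalks at generic points is not carried out; the paper instead deduces (ii) from (i) together with the $A$-module associativity argument of \autoref{cor_applications_NN} (comparing $j_r(M)$ and $j_r(B)$ for $M=\sum_{i=0}^{t-1}[B]_iA$ in the Segre-reduced setting), which is a complete and different route.
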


\begin{remark}
	\label{rem_weaker_cond}
	We compare \autoref{thm_criteria} to known results in the $\NN$-graded case.
	To prove the converse statements of \autoref{thm_criteria}, one typically assumes that $B$ is equidimensional and \emph{universally} catenary.
	For instance, the approach in \cite{SUV_MULT} uses the associated graded ring $G = \gr_{A_+B}(B)$ as the main tool, which yields an elegant and unified treatment.
	However, if one considers $G$,  one needs $G$ to be equidimensional and catenary.
	Then, the standard assumption for $G$ to be equidimensional and catenary is that $B$ is equidimensional and universally catenary (see \cite[Theorem 3.8]{RATLIFF_II}, \cite[Lemma 2.2]{SUV_MULT}).
	
	\noindent
	To circumvent any assumption on $G$, we work directly over the algebra $B$, and our primary tool is the technical result of \autoref{lem_properties_closed_point}.
\end{remark}	

The next corollary slightly improves some criteria from \cite{SUV_MULT} by weakening an assumption of universally catenary to just catenary.
In the $\NN$-graded case (i.e., $p=1$), to recover the original definition of Simis, Ulrich and Vasconcelos \cite{SUV_MULT}, we set $e_t(A, B) := e_t\left(\dim(B)-1; A, B\right)$ (notice that one can have the strict inequality $\dim(X) < \dim(B) -1$).

\begin{corollary}[{cf., \cite[Corollary 2.7, Theorem 3.3]{SUV_MULT}}]
	\label{cor_applications_NN}
	Adopt the notation of \autoref{thm_criteria} in an $\NN$-graded setting {\rm(}i.e., $p=1${\rm)}. 
	Let $G = \gr_{A_+B}(B)$ and $t \ge 1$.
	Consider the following conditions: 
	\begin{enumerate}[\rm (a)]
		\item $B$ is integral over $A$ and $B_\pp = \sum_{i=0}^{t-1}[B]_iA_\pp$ for every minimal prime $\pp$ of $A$.
		\item $\dim\left([B]_tG\right) < \dim(G) =\dim(B)$.
		\item $e_t(A, B) = 0$.
	\end{enumerate}
	In general, the implications {\rm(a)} $\Rightarrow$ {\rm(b)} $\Rightarrow$ {\rm(c)} hold. 
	If $B$ is equidimensional and catenary, the three conditions are equivalent.
\end{corollary}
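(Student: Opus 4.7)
The plan is to leverage the main result of this section, Theorem \ref{thm_criteria}(i), together with the decomposition and monotonicity properties established in Lemma \ref{lem_properties}, in order to derive all the implications while keeping the hypothesis of catenarity rather than universal catenarity. The easier implications (a) $\Rightarrow$ (b) $\Rightarrow$ (c) do not depend on any catenarity assumption and can be handled essentially as in \cite[Corollary 2.7]{SUV_MULT}: the hypothesis in (a) on $B_\pp$ at each minimal prime $\pp$ of $A$ forces the image of $[B]_t$ in $[G]_0 = B/A_+B$ to vanish generically, whence $\dim([B]_tG) < \dim(G) = \dim(B)$, where the last equality uses that $B$ is integral over $A$. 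This dimension drop then translates directly into the vanishing of $e_t(A,B)$ via the standard identification of $\lambda_t^{A,B}$ with a Hilbert function controlled by $[B]_tG$.

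For the converse (c) $\Rightarrow$ (a) under the weaker catenarity hypothesis, I would first apply Lemma \ref{lem_properties}(ii)--(iii) to deduce $e_\infty(A,B) = 0$ from $e_t(A,B)=0$ by monotonicity, and then invoke Theorem \ref{thm_criteria}(i) to conclude that $f : X \to Y$ is finite, equivalently, that $B$ is integral over $A$. To obtain the stronger generation condition at each minimal prime of $A$, I would use the additive decomposition in Lemma \ref{lem_properties}(i), which under our vanishing hypothesis forces both $\br_r([A]_1; B) = 0$ and $j_r^{\#}([A]_1; \sH_{t-1}) = 0$ separately, since each summand is a nonnegative integer. The vanishing of the mixed $j$-multiplicity $j_r^{\#}$ should then be unpacked via the associativity formula of Lemma \ref{lem_assoc_j_mult} together with the technical Lemma \ref{lem_properties_closed_point} announced in Remark \ref{rem_weaker_cond}, producing the generation condition $B_\pp = \sum_{i=0}^{t-1}[B]_i A_\pp$ at each minimal prime $\pp$ of $A$.

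The main obstacle is this final translation step: converting the multiplicity-theoretic vanishing $j_r^{\#}([A]_1; \sH_{t-1}) = 0$ into the algebraic generation condition using only equidimensionality and catenarity of $B$. The classical route through the associated graded ring $G = \gr_{A_+B}(B)$ would force an upgrade to universal catenarity via Ratliff's theorem, exactly as explained in Remark \ref{rem_weaker_cond}. The strategy to circumvent this is to work entirely on $B$: the associativity formula expresses $j_r^{\#}$ as a sum over minimal primes of $B_+\fG_{\sH_{t-1}}$ sitting in the closed fiber $X \times_{\Spec(R)} \Spec(\kappa)$, weighted by multiplicities of $B$ modulo these primes, and the role of Lemma \ref{lem_properties_closed_point} is precisely to match these closed-fiber primes with the minimal primes of $A$, thereby bypassing any dimension-theoretic hypothesis on $G$ and using catenarity of $B$ alone.
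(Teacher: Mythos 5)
Your first half matches the paper: the implications (a) $\Rightarrow$ (b) $\Rightarrow$ (c) are indeed quoted from \cite{SUV_MULT}, and the deduction of integrality from $e_t(A,B)=0$ via the monotonicity/stabilization of \autoref{lem_properties} together with \autoref{thm_criteria}(i) is exactly the paper's argument (modulo a small normalization you skip: since $e_t(A,B)=e_t(\dim(B)-1;A,B)$, one first passes to $\overline{B}=B/(0:_BB_+^\infty)$ and uses equidimensionality to reduce to the case $r=\dim(X)=\dim(B)-1$). The problem is your final step. To get the generation condition $B_\pp=\sum_{i=0}^{t-1}[B]_iA_\pp$ at the minimal primes of $A$, you propose to extract it from the vanishing of $j_r^{\#}\big([A]_1;\sH_{t-1}\big)$ via \autoref{lem_assoc_j_mult} and \autoref{lem_properties_closed_point}. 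This is where the argument has a genuine gap: $j_r^{\#}$ is a top-degree multiplicity of the module $B_+\fG_{\sH_{t-1}}$ over the associated graded construction, so its vanishing only rules out components of \emph{maximal} dimension in that support. If generation fails at a minimal prime $\pp$ of $A$ lying over $\mm$, the corresponding component of $B_+\fG_{\sH_{t-1}}$ (equivalently, of $[B]_tG$) need not have maximal dimension when $B$ is merely catenary; guaranteeing that it does is precisely the equidimensionality statement for $G=\gr_{A_+B}(B)$ that, by Ratliff's theorem, requires $B$ to be \emph{universally} catenary --- the very hypothesis \autoref{rem_weaker_cond} says the paper is avoiding. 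Moreover, \autoref{lem_properties_closed_point} cannot play the role you assign to it: it concerns closed points of $\Proj(T)$ and the functions $\length_R\left([T]_{v+nd}/H^n[T]_v\right)$, and it is used only inside the proof of \autoref{thm_criteria}(i); it provides no mechanism for matching components of $B_+\fG_{\sH_{t-1}}$ in the closed fiber with minimal primes of $A$.

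The paper's actual route bypasses the associated graded object entirely at this stage. Having established integrality (so that $A$ is equidimensional along with $B$), it considers the graded $A$-modules $M=\sum_{i=0}^{t-1}[B]_iA\subseteq B$ and applies the additivity and associativity formulas for the $j$-multiplicities over $A$ (\autoref{lem_j_mult_add}, \autoref{lem_assoc_j_mult}): $j_r(B)-j_r(M)=j_r(B/M)=e_t(A,B)=0$, and the associativity formula writes $j_r(B)$ and $j_r(M)$ as sums $\sum_\pp\length_{A_\pp}(\,\cdot\,)\,e(A/\pp)$ over the minimal primes $\pp$ of $A$ with $\pp\cap R=\mm$; equidimensionality of $A$ ensures every such $\pp$ satisfies $\dim(A/\pp)=r+1$ and hence appears in the sum, forcing $M_\pp=B_\pp$ there, while the hypothesis $\length_R\left([B]_1/[A]_1\right)<\infty$ handles the minimal primes with $\pp\cap R\subsetneq\mm$. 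If you want to salvage your approach, you should replace the appeal to $j_r^{\#}$ and \autoref{lem_properties_closed_point} by this argument with $j$-multiplicities of $A$-modules, since it is exactly the device that lets catenarity of $B$ suffice.
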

\begin{proof}
	The implications (a) $\Rightarrow$ (b) $\Rightarrow$ (c) follow from \cite[Corollary 2.4, Proposition 3.2]{SUV_MULT}.
	Thus we only need to show (c) $\Rightarrow$ (a) under the assumption that $B$ is equidimensional and catenary.
	Let $\overline{B} = B/(0:_BB_+^\infty)$ and $\overline{A} = A/A\cap(0:_BB_+^\infty)$.
	Notice that $\overline{B}$ is equidimensional and catenary, and that $A \hookrightarrow B$ is integral if and only if $\overline{A} \hookrightarrow \overline{B}$ is.
	As $B$ is equidimensional, we have either $\dim(\overline{B})=\dim(B)$ or $\overline{B}=0$.
	Therefore, we may assume that $r=\dim(X) = \dim(\overline{B}) -1 = \dim(B) - 1$.
	Since $e_\infty(r;A,B)=e_\infty(A, B) \le e_t(A,B) = 0$, \autoref{thm_criteria}(i) implies that $A \hookrightarrow B$ is integral.
	
	Next, we consider the finitely generated graded $A$-modules $M = \sum_{i=0}^{t-1}[B]_iA \subseteq B$ and $B$.
	The associativity formula of \autoref{lem_assoc_j_mult}  yields the equalities 
	$$
	j_r(M) = \sum_{\pp} \length_{A_\pp}\left(M_\pp\right) e(A/\pp) \quad \text{ and } \quad j_r(B) = \sum_{\pp} \length_{A_\pp}\left(B_\pp\right) e(A/\pp),
	$$
	where, in both summations,  $\pp \in \Spec(A)$ runs through the minimal primes of $A$ such that $\dim(A/\pp)=r+1$ and $\pp \cap R = \mm$.
	Notice that $A$ is also equidimensional.
	By additivity, we have $j_r(B) - j_r(M) = j_r(B/M) =e_t(A, B) =  0$ (see \autoref{lem_j_mult_add}).
	Since $\length_R([B]_1/[A]_1) < \infty$, we obtain $M_\pp = B_\pp$ for all $\pp \in \Spec(A)$ with $\pp \cap R \subsetneq \mm$.
	Therefore, it follows that $M_\pp = B_\pp$ for every minimal prime $\pp$ of $A$.
\end{proof}
	
	Before proving the above theorem, we need a couple of technical lemmas.
	The first one is inspired by Rees' original proof of his celebrated integral dependence criterion \cite{REES_CRITERION, REES_AMAO_THM}.
	
	\begin{lemma}
		\label{lem_properties_closed_point}
		Let $T$ be a standard graded $R$-algebra, and $\pi : X = \Proj(T) \rightarrow \Spec(R)$ be the natural projective morphism.
		Let $x \in X$ be a closed point and $P \subset T$ be the corresponding relevant homogeneous prime ideal. 
		Then the following statements hold:
		\begin{enumerate}[\rm (i)]
			\item $\pi$ maps $x$ to the closed point of $\Spec(R)$, and thus $P \supset \mm T$.
			\item ${\rm length}_R\big(\HH^0\big(X, \OO_X/\,\widetilde{P}\big)\big) = [\kappa(x) : \kappa]$, where $\kappa(x) = \OO_x/\mm_x$ is the residue field at $x$.
			\item $\lambda_x(n) := {\rm length}_R\big(\HH^0\big(X, \OO_X/\,\widetilde{P^n}\big)\big)$ eventually coincides with a polynomial $P_x(n)$ of degree $d_x := \dim(\OO_x)$ and whose leading coefficient is $[\kappa(x) : \kappa] \cdot e(\OO_x) / d_x!$.
			\item Let $H = [P]_d$ such that $(H) = [P]_{\ge d} = \bigoplus_{v \ge d} [P]_v$ {\rm(}i.e., $d$ is larger than or equal to the degrees of a set of homogeneous generators of $P${\rm)}.
			Then the function $L_d(v, n) := {\rm length}_R \left([T]_{v+nd} \big/ H^n[T]_v\right)$ eventually becomes a polynomial $P_d(v, n)$.
			Furthermore, we actually have $P_d(v, n) = P_x(n)$.
		\end{enumerate}
	\end{lemma}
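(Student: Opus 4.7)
The overall strategy is to exploit that $x$ being a closed point of $X$ makes $V_+(P)=\{x\}$ a zero-dimensional subscheme, so the coherent quotients $\OO_X/\widetilde{P^n}$ behave like skyscrapers concentrated at $x$: taking global sections returns the stalk, and twisting by a line bundle does not affect those global sections up to $R$-module isomorphism. This reduces every claim to either a Hilbert--Samuel computation in $\OO_x$ or a simple sheaf-theoretic manipulation.

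For part (i), I would invoke properness of the projective morphism $\pi:X\to\Spec(R)$: it sends closed points to closed points, so since $(R,\mm)$ is local, $\pi(x)$ must equal the unique closed point $\mm$, whence $\mm\subseteq P$ and $\mm T\subseteq P$. For part (ii), $V_+(P)=\{x\}$ has residue field $\kappa(x)$, so $\OO_X/\widetilde{P}\cong i_{*}\kappa(x)$ where $i:\{x\}\hookrightarrow X$ is the closed immersion, and $\HH^0\bigl(X,\OO_X/\widetilde{P}\bigr)\cong\kappa(x)$ as an $R$-module; since $\mm$ annihilates $\kappa(x)$, its $R$-length equals its $\kappa$-dimension $[\kappa(x):\kappa]$.

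For part (iii), the sheaf $\OO_X/\widetilde{P^n}$ is supported set-theoretically at $\{x\}$, so $\HH^0\bigl(X,\OO_X/\widetilde{P^n}\bigr)$ equals the stalk at $x$, namely $(T/P^n)_{(P)}\cong\OO_x/\mm_x^n$. Classical Hilbert--Samuel theory on the Noetherian local ring $\OO_x$ of dimension $d_x$ gives
$$\length_{\OO_x}\bigl(\OO_x/\mm_x^n\bigr) \;=\; \frac{e(\OO_x)}{d_x!}\,n^{d_x} \;+\; \text{(lower-order terms)}$$
for $n\gg 0$. Converting $\OO_x$-length to $R$-length via a composition series whose successive quotients are copies of $\kappa(x)$ multiplies by $[\kappa(x):\kappa]$ by part (ii), giving the required polynomial $P_x(n)$.

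For part (iv), the key identification is $\widetilde{P}=\widetilde{(H)}$ as ideal sheaves on $X$: the hypothesis $(H)=[P]_{\geq d}$ forces $P/(H)$ to live in degrees $<d$, so it sheafifies to zero. Taking $n$-th powers yields $\widetilde{P^n}=\widetilde{(H)^n}=\widetilde{(H^n)}$, hence $\OO_X/\widetilde{(H^n)}\cong\OO_X/\widetilde{P^n}$ as coherent sheaves. For $v+nd$ sufficiently large, the saturation comparison $[T/(H^n)]_{v+nd}\hookrightarrow\HH^0\bigl(X,\OO_X/\widetilde{(H^n)}(v+nd)\bigr)$ is an isomorphism. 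Since $\OO_X/\widetilde{P^n}$ is supported at the closed point $x$, the projection formula along $i:\{x\}\hookrightarrow X$ together with the triviality of line bundles on $\Spec\kappa(x)$ yields $\HH^0\bigl(X,(\OO_X/\widetilde{P^n})(v+nd)\bigr)\cong\HH^0\bigl(X,\OO_X/\widetilde{P^n}\bigr)=\OO_x/\mm_x^n$ as $R$-modules. Combining with (iii) gives $L_d(v,n)=\lambda_x(n)=P_x(n)$ for $v,n\gg 0$, so the polynomial $P_d(v,n)$ is actually independent of $v$ and equals $P_x(n)$. The main subtlety will be controlling the saturation threshold as $n$ varies; however, since for each sufficiently large $n$ the equality $L_d(v,n)=P_x(n)$ stabilizes in $v$, this already suffices to recognize the asserted polynomial behavior in the joint variable.
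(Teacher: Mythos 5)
Parts (i)--(iii) of your argument are correct and essentially identical to the paper's: closedness of $\pi$, the skyscraper identification $\HH^0\big(X,\OO_X/\widetilde{P^n}\big)\cong \OO_x/\mm_x^n$, and the Hilbert--Samuel polynomial of $(\OO_x,\mm_x)$ rescaled by $[\kappa(x):\kappa]$. The identification $\widetilde{(H)^n}=\widetilde{P^n}$ and the observation that twisting is harmless on a sheaf supported at the closed point $x$ are also fine and are what the paper uses implicitly.

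The gap is in the last step of (iv). What you actually prove is: for each fixed $n$ large, $L_d(v,n)=\lambda_x(n)=P_x(n)$ for all $v\ge v_0(n)$, where $v_0(n)$ is a saturation threshold for $T/(H)^n$ that a priori depends on $n$. Your closing sentence asserts that this ``already suffices to recognize the asserted polynomial behavior in the joint variable,'' but it does not: the statement of (iv) (and its use in the proof of \autoref{thm_criteria}, where $v$ is \emph{fixed} and $n\to\infty$) requires equality with a polynomial on a product region $\{v\ge v_0,\ n\ge n_0\}$ with thresholds independent of each other. A region of the shape $\{n\ge n_0,\ v\ge v_0(n)\}$ with $v_0(n)$ possibly growing gives nothing for fixed $v$ as $n\to\infty$, which is exactly the limit taken later. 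The paper closes this by first invoking \autoref{thm_poly_KT} (with $q=1$, $H_1=H$, $M=T$; the hypothesis holds since $V_+\big((H)\big)=\{x\}$ lies in the closed fiber by part (i)) to get the a priori two-variable polynomial $P_d(v,n)$ on a genuine product region, and only then uses your pointwise computation at fixed large $n_0$ to identify $P_d(v,n_0)$ with the constant $P_x(n_0)$, whence $P_d(v,n)=P_x(n)$ as polynomials. To repair your argument you must supply some such uniform input, e.g.\ quote \autoref{thm_poly_KT} (or an Amao--Rees type theorem), or prove a linear bound of the form $\reg\big((H)^n\big)\le dn+c$ with $c$ independent of $n$ so that the saturation condition $v+nd\ge \reg\big((H)^n\big)+1$ becomes a uniform bound on $v$; none of this is currently in your proof.
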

	\begin{proof}	
		(i) The morphism $\pi$ is closed, hence it should send the closed point $x \in X$ to the unique closed point in $\Spec(R)$.
		It clearly follows that $P \supset \mm T$.
		
		(ii) Since $\HH^0\big(X, \OO_X/\,\widetilde{P}\big) \cong \kappa(x)$, the equality is clear.
		
		(iii) For all $n \ge 1$, we have $\HH^0\big(X, \OO_X/\,\widetilde{P^n}\big) \cong \OO_x / \mm_x^n$, and so we obtain 
		$$
		\lambda_x(n) \;=\; [\kappa(x):\kappa] \cdot {\rm length}_{\OO_x}(\OO_x/\mm_x^n).
		$$
		This settles the claim, since ${\rm length}_{\OO_x}(\OO_x/\mm_x^n)$ is eventually the Hilbert-Samuel polynomial of the local ring $(\OO_x, \mm_x)$.
		
		(iv) The fact that $L_d(v, n)$ eventually coincides with a polynomial $P_d(v,n)$ follows from the existence of mixed Buchsbaum-Rim multiplicities (see \autoref{thm_poly_KT} or  \cite{KLEIMAN_THORUP_MIXED}). 
		We fix $n_0$ big enough so that $L_d(v, n_0)$ eventually becomes the polynomial $P_d(v, n_0)$ in $v$.
		However, for $v \gg 0$, we have 
		$$
		L_d(v,n_0) = {\rm length}_R \left({\left[T/P^{n_0}\right]}_{v+n_0d}\right) = {\rm length}_R\left(\HH^0\left(X, \OO_X/\,\widetilde{P^{n_0}}\,(v+n_0d)\right)\right) = \lambda_x(n_0),
		$$
		and the last equality follows because $x \in X$ is a closed point.
		This shows that $P_d(v, n) = P_x(n)$.
	\end{proof}

\begin{lemma}
	\label{lem_dim_closed_point}
	With the same notation of \autoref{thm_criteria}, assume that $B$ is equidimensional and catenary.
	Then, for any closed point $x \in X$, we have $d_x=\dim(\OO_x) = r$.
\end{lemma}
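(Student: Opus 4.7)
The plan is to compute $d_x = \dim(\mathcal{O}_x)$ on a well-chosen affine chart of $X$ around $x$, reducing it to the height of a graded prime of $B$ that equidimensionality and catenaricity will pin down.

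Let $P \subset B$ denote the $\NN^p$-graded prime corresponding to $x$. Since $X$ is projective over $\Spec(R)$, the image of $x$ is closed in $\Spec(R)$, so $\mathfrak{m} \subset P$; and because $P \in \multProj(B)$ while $B$ is standard multigraded, $P \not\supseteq B_{++}$ forces $P \not\supseteq [B]_{\ee_i}$ for every $i$. Pick $f_i \in [B]_{\ee_i} \setminus P$ and set $g := f_1\cdots f_p \in [B]_{\mathbf{1}}$. Then $x$ lies in the affine open $\Spec(A) = X \setminus V_+(gB)$, where $A := B_{(g)} = [B_g]_{\mathbf{0}}$ is the degree-$\mathbf{0}$ part of the localization $B_g$, and $\mathcal{O}_{X,x} = A_\mathfrak{p}$ with $\mathfrak{p} := PB_g \cap A$; since $x$ is closed in $X$ and hence in $\Spec(A)$, $\mathfrak{p}$ is a maximal ideal of $A$. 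The ring $B_g$ inherits a $\ZZ^p$-grading with each $f_i$ a unit of degree $\ee_i$, and every element of $[B_g]_\alpha$ writes uniquely as $a\cdot f_1^{\alpha_1}\cdots f_p^{\alpha_p}$ with $a \in A$; hence $B_g \cong A[T_1^{\pm 1},\ldots,T_p^{\pm 1}]$ as rings via $T_i \mapsto f_i$. Any homogeneous $h \in P$ of multidegree $\alpha$ satisfies $h/f^\alpha \in PB_g \cap A = \mathfrak{p}$, so $PB_g = \mathfrak{p}B_g = \mathfrak{p}A[T_1^{\pm},\ldots,T_p^{\pm}]$. The standard height formula for (Laurent) polynomial extensions gives $\HT_{B_g}(PB_g) = \HT_A(\mathfrak{p}) = \dim(A_\mathfrak{p})$, while localization preserves the height of contracted primes, so $\HT_{B_g}(PB_g) = \HT_B(P)$; hence $\dim(A_\mathfrak{p}) = \HT_B(P)$.

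The heart of the argument is the identity $\dim(B/P) = p$. Since $\mathfrak{m} \subset P$, $B/P$ is a standard $\NN^p$-graded domain finitely generated over $\kappa := R/\mathfrak{m}$, with each $\bar f_i \in [B/P]_{\ee_i}$ nonzero. Let $L$ denote the total homogeneous quotient ring of $B/P$ (obtained by inverting all nonzero homogeneous elements); its degree-$\mathbf{0}$ part $[L]_{\mathbf{0}}$ coincides with the residue field $\kappa(x)$, which is finite over $\kappa$ since $x$ is a closed point of the $\kappa$-projective special fibre of $X$. A degree argument forces the $\bar f_i$ to be algebraically independent over $[L]_{\mathbf{0}}$, so $L = [L]_{\mathbf{0}}[\bar f_1^{\pm 1},\ldots,\bar f_p^{\pm 1}]$, and $\text{Frac}(B/P) = \text{Frac}(L)$ therefore has transcendence degree $p$ over $\kappa$. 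The dimension theory of finitely generated domains over a field then yields $\dim(B/P) = p$.

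Finally, the hypothesis that $B$ is equidimensional and catenary gives $\HT_B(P) = \dim B - \dim(B/P) = \dim B - p$, so $\dim(A_\mathfrak{p}) = \dim B - p$. Combined with the general bounds $\dim(A_\mathfrak{p}) \leq \dim X = r$ (as $\dim(\mathcal{O}_{X,y}) \leq \dim X$ for any Noetherian scheme) and $r = \dim X = \dim(B/\HH_{B_{++}}^0(B)) - p \leq \dim B - p$, these force equality throughout and yield $d_x = r$. The main subtlety I anticipate is the careful identification $PB_g = \mathfrak{p}B_g$ together with the transcendence-degree computation for the multigraded fraction field, which together encode all the multigraded intricacies; the remaining pieces are a routine application of the equidimensional-catenary formula $\HT(P) + \dim(B/P) = \dim B$.
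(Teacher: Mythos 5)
Most of your argument is correct and runs parallel to the paper's proof: your chart computation identifying $\OO_x=A_\pp$ with $B_g\cong A[T_1^{\pm 1},\ldots,T_p^{\pm 1}]$ and $\dim(\OO_x)=\HT_B(P)$ is a carefully justified version of the paper's step $\dim(\OO_x)=\dim(B_P)$, and your transcendence-degree argument giving $\dim(B/P)=p$ is a valid alternative to the paper's use of $\dim(B/P)=\dim\big(\overline{\{x\}}\big)+p$. The problem is precisely the step you dismiss as routine. The formula $\HT(P)+\dim(B/P)=\dim(B)$ is a theorem about equidimensional catenary \emph{local} rings, and $B$ is not local. For non-local rings it fails in general: $B=V[y]$, with $V$ a discrete valuation ring with uniformizer $x$, is a standard graded, equidimensional, catenary (even regular) algebra over the Noetherian local ring $V$, yet the maximal ideal $(xy-1)$ has height $1$ while $\dim(B)=2$. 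So ``equidimensional and catenary'' alone does not give the inequality $\HT(P)\ge \dim(B)-\dim(B/P)$ that your sandwich argument needs.

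To make the step valid you must use that $P$ is a graded prime with $\mm\subseteq P\subseteq \MM:=\mm+B_+$, together with graded ($*$local) dimension theory: for instance, that $\HT(\MM)=\dim(B)$ and, for every minimal prime $\qqq$ of $B$, $\HT(\MM/\qqq)=\dim(B/\qqq)$ (equivalently, heights of graded primes are computed by chains of graded primes). With this, $B_\MM$ is an equidimensional catenary local ring of dimension $\dim(B)$, the local dimension formula applies there, and $\HT(\MM/P)=\dim(B/P)=p$ because $B/P$ is an affine domain over $\kappa$ (here you use $\mm\subseteq P$); this yields $\HT(P)=\dim(B)-p$. This graded input is exactly what the paper supplies by viewing $B$ as a $*$local ring with $*$maximal ideal $\MM$ and working with chains of homogeneous primes, and it is the crux of the lemma rather than a routine appendage; as written, your proof has a genuine gap at this point, though one that can be filled along the lines just indicated.
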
	
\begin{proof}
	We may assume $(0:_BB_{++}) = 0$, and so $\dim(B) = r+ p$.
	Notice that $B$ is naturally a *local ring with *maximal ideal given by $\MM = \mm + B_+$ and that $r+p=\dim(B) = \HT(\MM)$; see \cite[\S 1.5]{BRUNS_HERZOG}.
	Since $B$ is equidimensional and catenary, any maximal chain of homogeneous prime ideals has length $r+p$.	
	Let $x \in X$ be a closed point with associated prime $P \subset B$.
	As $x \in X$ is a closed point,  $\dim(B/P)=\dim\big(\overline{\{x\}}\big)+p=p$. 
	Therefore, we obtain $d_x = \dim(\OO_x) = \dim(B_P) = \dim(B) - \dim(B/P) = (r+p) - p = r$.
\end{proof}
	
We are now ready to prove the main result of this section.
	
	\begin{proof}[Proof of \autoref{thm_criteria}]
		We may substitute $A$ and $B$ by $\overline{A}=A/A\cap(0:_BB_{++}^\infty)$ and $\overline{B}=B/(0:_BB_{++}^\infty)$, respectively.
		By utilizing the Segre embedding, we obtain the isomorphisms $X \cong \Proj(T)$ and $Y \cong \Proj(S)$, where $S := \bigoplus_{n \ge 0} [A]_{(n,\ldots,n)}$ and $T := \bigoplus_{n \ge 0} [B]_{(n,\ldots,n)}$.
		Then we see $f : U \subseteq X=\Proj(T) \rightarrow Y=\Proj(S)$ as the morphism associated to the inclusion $S \subseteq T$ of standard graded $R$-algebras.
		
		By considering the substituted polynomial $P_{\mathbf{1}}^{A,B}(n,\ldots,n)$ (see \autoref{thm_relative_mixed_mult}), we can compute the equality 
		$$
		e(S, T) \;=\; \bigoplus_{\beta \in \NN^p,\, |\beta| = r} \frac{r!}{\beta_1!\cdots\beta_p!} \, e(\beta; A, B).
		$$
		Similarly, as a consequence of  \autoref{lem_properties}(iii), by choosing $t > 0$ large enough and considering the substituted polynomial $P_{(t,\ldots,t)}^{A, B}(n,\ldots,n)$, we can compute the equality
		$$
		e_\infty(S, T) \;=\; \bigoplus_{\beta \in \NN^p,\, |\beta| = r} \frac{r!}{\beta_1!\cdots\beta_p!} \, e_\infty(\beta; A, B).
		$$
		From these reductions, we may instead prove the theorem in terms of the vanishing of $e(S, T)$ and $e_\infty(S, T)$.
		
		(i)
		Suppose that $S \subseteq T$ is an integral extension.
		Then we may choose $t > 0$ large enough such that $[T]_n = [S]_{n-t+1}[T]_{t-1}$ for all $n \ge 0$.
		This shows that $e_\infty(S, T) = 0$.
		
		Conversely, suppose that $S \subseteq T$ is not an integral extension and that $B$ is equidimensional and catenary.
		Then $S_+T$ is a relevant ideal of $T$ (i.e., $\sqrt{S_+T} \not\supseteq T_+$).
		Let $x \in X$ be a closed point with associated prime $P \subset T$ such that $P \supseteq S_+T$.
		From \autoref{lem_dim_closed_point}, we get the equality $d_x = \dim(\OO_x) = r$.
		By \autoref{lem_properties_closed_point}, for large enough integers $d > 0$ and $v > 0$,  we obtain 
		$$
		\length_R\big([T]_{v+nd}/[S]_{nd}[T]_v\big) \;\ge\; \length_R\big([T]_{v+nd}/H^n[T]_v\big) \;=\; P_x(n) \quad \text{ for } \quad n \gg 0,
		$$
		where $H = [P]_d$ and $P_x(n)$ is a polynomial of degree $r$ with leading coefficient $[\kappa(x) : \kappa] \cdot e(\OO_x) / r!$.
		It then follows that 
		$$
		e_\infty(S, T) \;\;=\;\; \frac{r!}{d^r} \lim_{n \to \infty}  \frac{\length_R\big([T]_{v+nd}/[S]_{nd}[T]_v\big)}{n^r} \;\;\ge\;\; \frac{r!}{d^r} \lim_{n \to \infty} \frac{P_x(n)}{n^r} \;\;=\;\; \frac{[\kappa(x):\kappa] \cdot e(\OO_x)}{d^r} \;\;>\;\; 0.
		$$
		Therefore, $e_\infty(S, T) \ge 1$ is a positive integer, as required.
		
		(ii) After proving part (i), the result of \autoref{cor_applications_NN} is already valid.
		In particular, the statement about birationality follows.
	\end{proof}

	It should be mentioned that in a multigraded setting the inclusion $A \subseteq B$ may not be integral and yet we can still obtain a finite morphism:
	
	\begin{example}
		Let $\kk$ be a field and consider the following inclusion of standard $\NN^2$-graded $\kk$-algebras
		$$
		A = \kk[x_1,x_2,y_1,y_2] \;\; \hookrightarrow \;\; B = \frac{\kk[x_1,x_2,x_3,y_1,y_2,y_3]}{\big( x_3(y_1,y_2,y_3), \, y_3(x_1,x_2,x_3)\big)}
		$$
		with $\deg(x_i) = \ee_1 \in \NN^2$ and $\deg(y_i) = \ee_2 \in \NN^2$.
		Then $f : \biProj(B) \subset \PP_\kk^2 \times_\kk \PP_\kk^2 \rightarrow \biProj(A) = \PP_\kk^1 \times_\kk \PP_\kk^1$ is a finite morphism (in fact, it is an isomorphism since $B/(0:_BB_{++}) \cong B/(x_3, y_3)$).
		However, the extension $A \hookrightarrow B$ is not integral.
	\end{example} 
	
	Finally, we briefly discuss the particular case of graphs of rational maps.
	
	\begin{remark}
		Let $\kk$ be a field and $\mathfrak{d}_1 \subseteq \mathfrak{d_2} \subseteq \HH^0\big(\PP_\kk^r, \OO_{\PP_\kk^r}(d)\big)$ be two linear systems. 
		Let $\FF_1 : \PP_\kk^r \dashrightarrow \PP_\kk^{s_1}$ and $\FF_2 : \PP_\kk^r \dashrightarrow \PP_\kk^{s_2}$ be the respective rational maps.
		Let $\Gamma_i$ and $E_i$ be the graph (blow-up of the base locus) and exceptional divisor of $\FF_i$, respectively.
		Then the following conditions are equivalent: 
		\begin{enumerate}[\rm (a)]
			\item The induced rational map $f : \Gamma_2 \subset \PP_\kk^r \times_\kk \PP_\kk^{s_2} \dashrightarrow \Gamma_1 \subset \PP_\kk^r \times_\kk \PP_\kk^{s_1}$ between graphs is a finite birational morphism.
			\item The projective degrees of $\FF_1$ and $\FF_2$ coincide (see \cite[Example 19.4]{HARRIS}, \cite[\S 7.1.3]{DOLGACHEV}).
			\item The multidegrees of $E_1$ and $E_2$ coincide.
		\end{enumerate}
		The equivalence between (a) and (b) follows from \autoref{thm_criteria}.
		Since $\OO_{\Gamma_i}(-E_i) \cong \OO_{\Gamma_i}(-d, 1)$, we have the short exact sequence $0 \rightarrow \OO_{\Gamma_i}(-d,1) \rightarrow \OO_{\Gamma_i} \rightarrow \OO_{E_i} \rightarrow 0$, and so we can compute the following equality on multidegrees (see, e.g., \cite{POSITIVITY}):
		$$
		\deg^{(n_1,n_2)}(E_i) \;=\; d\cdot\deg^{(n_1+1,n_2)}(\Gamma_i) - \deg^{(n_1,n_2+1)}(\Gamma_i) \quad \text{ for all } \quad n_1+n_2=r-1.
		$$
		Therefore, since $\deg^{(r,0)}(\Gamma_i) = 1$, it follows that parts (b) and (c) are equivalent.
		
		Alternatively, the equivalence between parts (a) and (c) could be deduced as a very particular case of the main result of \cite{PTUV_MULT}.
		Indeed, important recent work of Polini, Trung, Ulrich and Validashti \cite{PTUV_MULT} shows that, over an equidimensional and universally catenary Noetherian local ring, two ideals $I \subset J$ have the same integral closure if and only if their multiplicity sequences coincide.
	\end{remark}

%

	\bibliography{references}

@Book{BRUNS_HERZOG,
  title      = {Cohen-{M}acaulay Rings},
  publisher  = {Cambridge University Press},
  year       = {1998},
  author     = {Bruns, Winfried and Herzog, J\"urgen},
  series     = {Cambridge Studies in Advanced Mathematics},
  edition    = {2},
  collection = {Cambridge Studies in Advanced Mathematics},
  place      = {Cambridge},
}

@book{huneke2006integral,
	Author = {Huneke, Craig and Swanson, Irena},
	Publisher = {Cambridge University Press},
	Title = {Integral closure of ideals, rings, and modules},
	Volume = {13},
	Year = {2006}}

@article{HYRY_MULTIGRAD,
	Author = {Hyry, Eero},
	Fjournal = {Transactions of the American Mathematical Society},
	Journal = {Trans. Amer. Math. Soc.},
	Number = {6},
	Pages = {2213--2232},
	Title = {The diagonal subring and the {C}ohen-{M}acaulay property of a multigraded ring},
	Volume = {351},
	Year = {1999}}

@book{HARTSHORNE,
	Author = {Hartshorne, Robin},
	Note = {Graduate Texts in Mathematics, No. 52},
	Pages = {xvi+496},
	Publisher = {Springer-Verlag, New York-Heidelberg},
	Title = {Algebraic geometry},
	Year = {1977}}

@Article{ACHILLES_MANARESI_J_MULT,
  author   = {Achilles, R\"udiger and Manaresi, Mirella},
  title    = {Multiplicity for ideals of maximal analytic spread and intersection theory},
  journal  = {J. Math. Kyoto Univ.},
  year     = {1993},
  volume   = {33},
  number   = {4},
  pages    = {1029--1046},
  fjournal = {Journal of Mathematics of Kyoto University},
}

@Book{FLENNER_O_CARROLL_VOGEL,
  title     = {Joins and intersections},
  publisher = {Springer-Verlag, Berlin},
  year      = {1999},
  author    = {Flenner, H. and O'Carroll, L. and Vogel, W.},
  series    = {Springer Monographs in Mathematics},
}

@Book{FULTON_INTERSECTION_THEORY,
  title     = {Intersection theory},
  publisher = {Springer-Verlag, Berlin},
  year      = {1998},
  author    = {Fulton, William},
  volume    = {2},
  series    = {Ergebnisse der Mathematik und ihrer Grenzgebiete. 3. Folge. A Series of Modern Surveys in Mathematics [Results in Mathematics and Related Areas. 3rd Series. A Series of Modern Surveys in Mathematics]},
  edition   = {Second},
  pages     = {xiv+470},
}

@Article{SPECIALIZATION_ARON,
  author  = {Cid-Ruiz, Yairon and Simis, Aron},
  title   = {{Degree of Rational Maps and Specialization}},
  journal = {International Mathematics Research Notices},
  year    = {2020},
  month   = {08},
  issn    = {1073-7928},
  note    = {rnaa183},
  doi     = {10.1093/imrn/rnaa183},
  url     = {https://doi.org/10.1093/imrn/rnaa183},
}

@Article{MIXED_MULT,
  author     = {Cid-Ruiz, Yairon},
  title      = {Mixed multiplicities and projective degrees of rational maps},
  journal    = {J. Algebra},
  year       = {2021},
  volume     = {566},
  pages      = {136--162},
  issn       = {0021-8693},
  doi        = {10.1016/j.jalgebra.2020.08.037},
  fjournal   = {Journal of Algebra},
  mrclass    = {13H15 (13A30 13D02 14E05)},
  mrnumber   = {4151547},
  mrreviewer = {K. Kiyek},
  url        = {https://doi.org/10.1016/j.jalgebra.2020.08.037},
}

@Article{HERMANN_MULTIGRAD,
  author   = {Herrmann, Manfred and Hyry, Eero and Ribbe, J\"{u}rgen and Tang, Zhongming},
  title    = {Reduction numbers and multiplicities of multigraded structures},
  journal  = {J. Algebra},
  year     = {1997},
  volume   = {197},
  number   = {2},
  pages    = {311--341},
  fjournal = {Journal of Algebra},
}

@Book{DOLGACHEV,
  author    = {Dolgachev, Igor V.},
  title     = {Classical algebraic geometry},
  publisher = {Cambridge University Press, Cambridge},
  year      = {2012},
  note      = {A modern view},
  pages     = {xii+639},
}

@Book{HARRIS,
  author    = {Harris, Joe},
  title     = {Algebraic geometry},
  publisher = {Springer-Verlag, New York},
  year      = {1995},
  volume    = {133},
  series    = {Graduate Texts in Mathematics},
  note      = {A first course, Corrected reprint of the 1992 original},
  pages     = {xx+328},
}

@Article{KLEIMAN_THORUP_GEOM,
  author     = {Kleiman, Steven and Thorup, Anders},
  title      = {A geometric theory of the {B}uchsbaum-{R}im multiplicity},
  journal    = {J. Algebra},
  year       = {1994},
  volume     = {167},
  number     = {1},
  pages      = {168--231},
  issn       = {0021-8693},
  doi        = {10.1006/jabr.1994.1182},
  fjournal   = {Journal of Algebra},
  mrclass    = {14C17 (13D40 13H15)},
  mrnumber   = {1282823},
  mrreviewer = {R\"{u}diger Achilles},
  url        = {https://doi-org.kuleuven.e-bronnen.be/10.1006/jabr.1994.1182},
}

@Article{KLEIMAN_THORUP_MIXED,
  author   = {Kleiman, Steven and Thorup, Anders},
  title    = {Mixed {B}uchsbaum-{R}im multiplicities},
  journal  = {Amer. J. Math.},
  year     = {1996},
  volume   = {118},
  number   = {3},
  pages    = {529--569},
  issn     = {0002-9327},
  fjournal = {American Journal of Mathematics},
  mrclass  = {14C17 (13H15)},
  mrnumber = {1393259},
  url      = {http://muse.jhu.edu.kuleuven.e-bronnen.be/journals/american_journal_of_mathematics/v118/118.3kleiman.pdf},
}

@Article{CHARDIN_POWERS,
  author     = {Chardin, Marc},
  title      = {Powers of ideals and the cohomology of stalks and fibers of morphisms},
  journal    = {Algebra Number Theory},
  year       = {2013},
  volume     = {7},
  number     = {1},
  pages      = {1--18},
  issn       = {1937-0652},
  doi        = {10.2140/ant.2013.7.1},
  fjournal   = {Algebra \& Number Theory},
  mrclass    = {13D45 (13A02)},
  mrnumber   = {3037888},
  mrreviewer = {Toma Albu},
  url        = {https://doi-org.kuleuven.e-bronnen.be/10.2140/ant.2013.7.1},
}

@Article{SUV_MULT,
  author     = {Simis, Aron and Ulrich, Bernd and Vasconcelos, Wolmer V.},
  title      = {Codimension, multiplicity and integral extensions},
  journal    = {Math. Proc. Cambridge Philos. Soc.},
  year       = {2001},
  volume     = {130},
  number     = {2},
  pages      = {237--257},
  issn       = {0305-0041},
  doi        = {10.1017/S0305004100004667},
  fjournal   = {Mathematical Proceedings of the Cambridge Philosophical Society},
  mrclass    = {13B21 (13A30 13C15 13D40 13H15)},
  mrnumber   = {1806775},
  mrreviewer = {Gabriel Picavet},
  url        = {https://doi-org.kuleuven.e-bronnen.be/10.1017/S0305004100004667},
}

@Article{POSITIVITY,
  author    = {Castillo, Federico and Cid-Ruiz, Yairon and Li, Binglin and Monta{\~n}o, Jonathan and Zhang, Naizhen},
  title     = {When are multidegrees positive?},
  journal   = {Advances in Mathematics},
  year      = {2020},
  volume    = {374},
  pages     = {107382},
  publisher = {Elsevier},
}

@InCollection{THORUP_INT_TH,
  author    = {Thorup, Anders},
  booktitle = {Enumerative geometry ({S}itges, 1987)},
  publisher = {Springer, Berlin},
  title     = {Rational equivalence theory on arbitrary {N}oetherian schemes},
  year      = {1990},
  isbn      = {3-540-52811-3},
  pages     = {256--297},
  series    = {Lecture Notes in Math.},
  volume    = {1436},
  doi       = {10.1007/BFb0084049},
  mrclass   = {14C15},
  mrnumber  = {1068969},
  url       = {https://doi.org/10.1007/BFb0084049},
}

@Article{REES_CRITERION,
  author     = {Rees, D.},
  journal    = {Proc. Cambridge Philos. Soc.},
  title      = {{${\mathfrak{a}}$}-transforms of local rings and a theorem on multiplicities of ideals},
  year       = {1961},
  issn       = {0008-1981},
  pages      = {8--17},
  volume     = {57},
  doi        = {10.1017/s0305004100034800},
  fjournal   = {Proceedings of the Cambridge Philosophical Society},
  mrclass    = {16.00},
  mrnumber   = {118750},
  mrreviewer = {H.\ T.\ Muhly},
  url        = {https://doi.org/10.1017/s0305004100034800},
}

@Article{PTUV_MULT,
  author     = {Polini, Claudia and Trung, Ngo Viet and Ulrich, Bernd and Validashti, Javid},
  journal    = {Math. Ann.},
  title      = {Multiplicity sequence and integral dependence},
  year       = {2020},
  issn       = {0025-5831,1432-1807},
  number     = {3-4},
  pages      = {951--969},
  volume     = {378},
  doi        = {10.1007/s00208-020-02059-5},
  fjournal   = {Mathematische Annalen},
  mrclass    = {13B22 (13A30 13D40 14B05)},
  mrnumber   = {4163518},
  mrreviewer = {Catalin\ Ciuperca},
  url        = {https://doi.org/10.1007/s00208-020-02059-5},
}

@InCollection{KIRBY_REES,
  author    = {Kirby, D. and Rees, D.},
  booktitle = {Commutative algebra: syzygies, multiplicities, and birational algebra ({S}outh {H}adley, {MA}, 1992)},
  publisher = {Amer. Math. Soc., Providence, RI},
  title     = {Multiplicities in graded rings. {I}. {T}he general theory},
  year      = {1994},
  isbn      = {0-8218-5188-8},
  pages     = {209--267},
  series    = {Contemp. Math.},
  volume    = {159},
  doi       = {10.1090/conm/159/01510},
  mrclass   = {13A05 (13H15)},
  mrnumber  = {1266185},
  url       = {https://doi.org/10.1090/conm/159/01510},
}

@Article{ULRICH_VALIDASHTI_INT_DEP_MOD,
  author     = {Ulrich, Bernd and Validashti, Javid},
  journal    = {Math. Res. Lett.},
  title      = {A criterion for integral dependence of modules},
  year       = {2008},
  issn       = {1073-2780},
  number     = {1},
  pages      = {149--162},
  volume     = {15},
  doi        = {10.4310/MRL.2008.v15.n1.a13},
  fjournal   = {Mathematical Research Letters},
  mrclass    = {13C15 (13B21 13B22)},
  mrnumber   = {2367181},
  mrreviewer = {Aron\ Simis},
  url        = {https://doi.org/10.4310/MRL.2008.v15.n1.a13},
}

@Article{RATLIFF_II,
  author     = {Ratliff, Jr., L. J.},
  journal    = {Amer. J. Math.},
  title      = {On quasi-unmixed local domains, the altitude formula, and the chain condition for prime ideals. {II}},
  year       = {1970},
  issn       = {0002-9327,1080-6377},
  pages      = {99--144},
  volume     = {92},
  doi        = {10.2307/2373501},
  fjournal   = {American Journal of Mathematics},
  mrclass    = {13.25},
  mrnumber   = {265339},
  mrreviewer = {R.\ E.\ MacRae},
  url        = {https://doi.org/10.2307/2373501},
}

@Article{REES_AMAO_THM,
  author     = {Rees, D.},
  journal    = {J. London Math. Soc. (2)},
  title      = {Amao's theorem and reduction criteria},
  year       = {1985},
  issn       = {0024-6107,1469-7750},
  number     = {3},
  pages      = {404--410},
  volume     = {32},
  doi        = {10.1112/jlms/s2-32.3.404},
  fjournal   = {Journal of the London Mathematical Society. Second Series},
  mrclass    = {13C05},
  mrnumber   = {825915},
  mrreviewer = {Stephen\ McAdam},
  url        = {https://doi.org/10.1112/jlms/s2-32.3.404},
}

@Article{KATZ_RED_MOD,
  author     = {Katz, D.},
  journal    = {Comm. Algebra},
  title      = {Reduction criteria for modules},
  year       = {1995},
  issn       = {0092-7872,1532-4125},
  number     = {12},
  pages      = {4543--4548},
  volume     = {23},
  doi        = {10.1080/00927879508825485},
  fjournal   = {Communications in Algebra},
  mrclass    = {13H15 (13D40)},
  mrnumber   = {1352554},
  mrreviewer = {J.\ K.\ Verma},
  url        = {https://doi.org/10.1080/00927879508825485},
}

\end{document}